\documentclass[a4paper,12pt]{article}
\usepackage{bm}
\usepackage{amsmath}
\usepackage{amsthm}
\usepackage{wasysym}
\usepackage{amssymb}
\usepackage{amsfonts}
\usepackage{graphicx}
\usepackage[english]{babel}
\usepackage[utf8]{inputenc}%a la place d'applemac utf8
\usepackage[T1]{fontenc}
\usepackage{mathrsfs}
\usepackage{enumerate}
\usepackage{version}
\usepackage{calc}
\usepackage{subfigure}
\usepackage{authblk}
\usepackage{comment}
 \usepackage{pstricks,pst-math,pst-xkey}
 \usepackage{float}
 \usepackage{indentfirst}%%%%%%%%%%%%%%%%%%%%%%%%%%
\usepackage[pagewise]{lineno} %\linenumbers
\newtheorem{definition}{Definition}[section]
\newtheorem{theorem}[definition]{Theorem}
\newtheorem{lemma}[definition]{Lemma}
\newtheorem{corollary}[definition]{Corollary}
\newtheorem{proposition}[definition]{Proposition}
\newtheorem{remark}[definition]{Remark}

\numberwithin{equation}{section}

\newcommand{\N}{{\mathbb N}}

\newcommand{\R}{{\mathbb R}}

\newcommand{\ve}{{\varepsilon}}

\def\ve{\varepsilon}

\def\f{\varphi}

\def\d{\partial}

\def\tu{\tilde{u}}

\def\N{\mathbb{N}}

\def\T{\mathbb{T}_L}

\def\f{\varphi}

\def\R{\mathbb{R}}
\def\E{\mathbb{E}}
\def\N{\mathbb{N}}

\begin{document}

\title{Long Time Behavior of Stochastic Thin Film Equation}

\author[1]{Oleksiy Kapustyan \thanks{kapustyan@knu.edu}}

\author[3] {Olha Martynyuk
\thanks{o.martynyuk@chnu.edu.ua}}

\author[2]{Oleksandr Misiats\thanks{omisiats@vcu.edu}}

\author[1]{Oleksandr Stanzhytskyi \thanks{stanzhytskyi@knu.edu}}

\affil[1]{Department of Mathematics,
Taras Shevchenko National University of Kyiv, Ukraine}

\affil[2]{Department of Mathematics, Virginia Commonwealth University,
Richmond, VA, 23284, USA}

\affil[3]{Department of Mathematics, Yuriy Fedkovych Chernivtsi National University, Chernivtsi, Ukraine}

%\author{Oleksandr~\textsc{Misiats}\\
% Department of Mathematics, Purdue University\\
%West Lafayette, IN, 47907, USA
%\\{\tt omisiats@purdue.edu}
%\and Oleksandr~\textsc{Stanzhytskyi}\\
%Department of Mathematics, Kiev National University, Kiev, Ukraine
%\\{\tt ostanzh@gmail.com}
%\and Nung Kwan~\textsc{Yip}\\
% Department of Mathematics, Purdue University\\
%West Lafayette, IN, 47907, USA
%\\{\tt yip@math.purdue.edu}
%}

%\and {Oleksandr~\textsc{Stanzhytskyi}
%\\Department of Mathematics, Kiev Nantional University, Kiev, Ukraine
%\\{\tt misiats@math.psu.edu}}

\maketitle

\begin{abstract}
We consider the stochastic thin-film equation with linear deterministic and stochastic It\^o perturbations. The existence of nonnegative weak
martingale solutions on the semi-axis is established, and their asymptotic
behavior as $t \to \infty$ is investigated. It is shown that in square mean the $L^\infty$ norm of the solution converges to the spatial mean value of the initial condition, multiplied by a random factor similar to a geometric Wiener process.
\end{abstract}

\section{Introduction}

We consider the stochastic thin-film equation
\begin{equation}
\mathrm{d}u
=
- \partial_x\!\left(u^n \partial_x^3 u\right)\,\mathrm{d}t
+ \gamma(t) u \,\mathrm{d}t
+ \alpha(t) u \,\mathrm{d} \beta (t),
\label{1.1}
\end{equation}
for $t \geq 0$, $x \in [0,L]$, where $L>0$ and $\mathbb{T}_L$ denotes the
one-dimensional torus of length $L$. Denote $Q_T:=[0,T] \times [0,L]$.
We will always assume periodic boundary conditions
\[
\partial_x^i u(t,0) = \partial_x^i u(t,L), \qquad  t \geq 0, 
\qquad i=0,1,2,3,
\]
and nonnegative initial data
\[
u(0,x) = u_0(x) : \mathbb{T}_L \to [0,\infty).
\]
The process $\beta(t)$ is a Wiener process defined on a complete filtered probability
space
\[
(\Omega,\mathcal{F},(\mathcal{F}_t)_{t\ge 0},\mathbb{P}), \ t \geq 0,
\]
with a complete and right-continuous filtration $(\mathcal{F}_t)_{t\ge 0}$.
The functions $\gamma(t)$ and $\alpha(t)$ describing the deterministic and stochastic
perturbations, respectively, are continuous for $t \geq 0$. The main result of this work is to establish the asymptotic behavior of solutions $u(t, x, \omega)$ as $t \to \infty$, in particular, to show that, under suitable assumptions, for $n \geq 4$ the solutions either converge to zero, or
\begin{equation*}
\mathbb{E}\,\|u(t,\cdot)-\bar u_0 \eta(t)\|_{L^\infty}^2 \to 0,
\qquad t \to \infty,
\label{1.2}
\end{equation*}
where $$\bar u_0 = \frac{1}{L} \int_0^L u_0(x) \, dx,$$
and $\eta(t)$ is a random process of geometric Wiener process type
\begin{equation*}
\eta(t) = \exp\!\left(
\int_0^t (\gamma(s) - 
\frac12 \alpha^2(s) ) \, ds + \int_0^t \alpha^2(s)\, d\beta(s)
\right).
\end{equation*}
The process $\eta(t)$ is a Gaussian process with independent increments, and for any $p \in \R$ we get 
\begin{equation}
\E |\eta(t)|^p = \exp\!\left(
p \int_0^t (\gamma(s) - 
\frac12 \alpha^2(s) ) \, ds + \frac{p^2}{2} \int_0^t \alpha^2(s)\, d\beta(s)
\right).
\label{ast}
\end{equation}

The equations of type  \eqref{1.1}  model the motion of liquid droplets of thickness $u$, spreading over the solid surface. It can be derived from the lubrication theory under the so-called ``thin-film assumption'', that is, that the dimensions in the horizontal directions are significantly larger than in the vertical (normal) one. Under this assumption, the surface tension plays the key role in the dynamics of the droplet. The regions where  $u>0$ are called the {\it wetted regions}. The equation is parabolic inside these regions, and degenerate on the boundary of these regions. This way one may think of  \eqref{1.1} as of a  fourth-order nonlinear free boundary problem inside a wetted region, which evolves in time  with finite propagation speed \cite{Bernis}. 

The mathematical analysis of equations of type \eqref{1.1} is particularly tricky due to the lack of comparison (maximum) principle, which is one of the primary analytical tools in classic parabolic theory. One of the pioneering works in this direction was the paper  \cite{Bern} by Bernis and Friedman. In it, the authors established the existence of a non-negative generalized weak solution using the energy-entropy method. This solution was obtained as a limit of the solutions of the regularized problems. It is worth noting that in this work, the authors introduce the notion of a weak solution via an integral identity inside wetted regions. This notion is somewhat ``weaker'' than the subsequent definitions, which involve the integral identities over the entire region $\mathbb{T}$.  In \cite{Bern} the authors also describe the behavior of the support of the solution. 

 The work \cite{DalPas} addressed existence of more regular (strong or entropy) solutions.  In this work the authors also studied the asymptotic in $t$ behavior of the solution. In particular, it was shown that under certain assumptions on the initial conditions,
 \begin{equation}\label{1.3}
 \sup_{x \in L}\left|u(t,x) - \frac{1}{L} \int_0^L u_0(x) \, dx \right| \to 0, \ t \to \infty.
 \end{equation}
 A similar result was obtained in \cite{BertPugh} using a somewhat different approach using entropy estimates. Furthermore, this work established that the rate of the aforementioned convergence is exponential.  The work \cite{Tud} makes use of the energy estimates for the regularized problem to obtain a stronger result, that is, the exponential decay of the energy
 \begin{equation}\label{energy}
 J[u]:= \frac{1}{2} \int_0^L u_x^2(t,x) \, dx.
 \end{equation}
 
The work \cite{KapTar} established the existence of a global trajectory attractor for the generalized thin film equation with a nonlinear dissipative term, which describes the relation between nonlinear absorption and spatial injection. The result \cite{Wang} extends the analysis of a related  stochastic PDE to the homogenization setting.  In  \cite{Grun95} the problem of such type appeared in modeling the evolution of the dislocation density in the theory of plasticity. The work \cite{Jin} considered the thin film equation with nonlocal elliptic negative operator and with inhomogeneous forces relevant to hydraulic fracture modeling.

In this paper we consider the thin film equation with stochastic perturbation. It is worth mentioning that one needs to be very careful when dealing with the effect of noise on nonlocal and/or ill-posed problems. In some cases, e.g. \cite{MisStaTop}, the presence of a even small stochastic perturbation leads to a finite time blowup while an unperturbed equation has global solution. In others \cite{Sta}, the effect of the random perturbation is exactly opposite - it may lead to the existence of a global solution while the corresponding deterministic equation has a finite time blowup. The long time behavior of stochastically perturbed evolution equations is typically described via the existence and properties of invariant measures, see, e.g. \cite{MisStaYip1}, \cite{MisStaYip2}, \cite{MisStaYip3}, \cite{MisStaSta}, \cite{MisStaHie}, \cite{MisStaKap}, \cite{MisStaCla}.

The stochastic version of thin-film equation was first introduced in \cite{DavMorSlo}, it described  modeling the enhanced spreading of droplets.  In the subsequent work \cite{GruMeRa} the authors additionally take the interface potential between fluid and substrate into account. This prevents the solution $u$ from becoming negative, and allows to describe the coarsening and de-wetting phenomena. 

The first rigorous construction of a non-negative martingale solution of the stochastic thin film equation with Ito noise and additional interface potential was obtained in \cite{FisGru}. This result was derived by constructing a spatially discrete approximation of the solution.  In \cite{Cor}, the author considered a more general case of the main operator in the form $-\partial_x(u^{n}u_{xxx})$ (referred as a more general {\it{mobility}}), and established the conditions for the existence of a global strong solution for this problem. 

The work \cite{Gess1}  established the existence of a nonnegative martingale solution for \eqref{1.1} with quadratic mobility.  The main tool in \cite{Gess1} was Trotter-Kato type decomposition of the dynamics into deterministic and stochastic parts, which are eventually coupled. The subsequent work \cite{Gess2} established the similar result for
\[
du  = - \partial_x(u^n \partial_x^3 u)\, dt + \partial _x (u^{\frac{n}{2}} \circ  dW)
\]
with $n \in [\frac{8}{3}, 4)$.  To this end, the authors started with regularizing the problem by replacing the mobility $u^n$ with $u^n \to (u^2+\ve^2)^{n/4}, \ \ve>0$, which makes the problem non-degenerate. Then, by means of Galiorkin approximations, the authors  establish the existence of a weak solution for the regularized equation, and  pass to the limit $\ve \to 0$.  The authors also mention that the Trotter-Kato scheme is not applicable in this case. In particular, in the case $n = 3$, when estimating the term $\E \|\d_x u\|_2^2$ the term of the form $\E \int_0^L u^{-1} (\d_x u)^4 \, dx$ emerges and thus hinders us from obtaining an energy estimate. Such estimate, however, may be established using the entropy estimates for the full equation. The use of $\alpha$ - entropy estimates in the work \cite{Sauer} allowed to generalize the results of \cite{Gess2} to the case of $n \in (2,3)$.

In \cite{KapMisSta23} we established the existence of a non-negative martingale solution of the nonlinear stochastic thin-film  equation with nonlinear deterministic and stochastic drift coefficients
\begin{equation*}
du = (-\partial_x(u^{2}u_{xxx})  + l(u))dt + \partial_x (u \circ dW)+ f(u)dW_1(t),
\end{equation*}
where $W$ and $W_1$ are independent $Q$-Wiener processes. 
The above equation has both nonlinear absorption $l(u)$, as well as the nonlinear stochastic Ito perturbation $f(u) d W_1(t)$. Due to the presence of these two terms, the equation is no longer in divergence form, which makes its analysis significantly different from the one in \cite{Gess1} and \cite{Gess2}. 

The literature on the asymptotic long-time behavior of solutions of a stochastic thin-film equation, on contrary, is pretty sparce. To the best of our knowledge, we are aware only of the recent work \cite{GRLO}, which studied the finite speed of propagation of the solution.

Let us outline the strategy of obtaining the main result on the long-time behavior of solutions to equation (1.1).

\medskip

\noindent
\textbf{Step 1.}
First, using the Trotter--Kato scheme, we establish the existence of a weak solution on a finite interval $[0,T]$.

\medskip

\noindent
\textbf{Step 2.}
Prove the the local solution from Step 1 is in fact global-in-time solution by means of Kolmogorov Theorem.

\medskip

\noindent
\textbf{Step 3.}
Using entropy estimates, we establish the positivity of the solution.

\medskip

\noindent
\textbf{Step 4.} Show that the energy (and, in certain cases, the mass) converge to zero. The main result in this case will follow using either the analysis of a certain linear equation, or from Poincare inequality.

\medskip

The paper is structured as follows. In Section~\ref{Sec2} we introduce the notation, definitions, and state preliminary results as well as the main result. In Section \ref{Sec 3} through Section \ref{Sec 5} we derive the existence of a positive martingale solution of equation \eqref{1.1} on $t \geq 0$. Finally, in Section \ref{Sec 6} we establish the main result. 

\section{Preliminaries and Main Results} \label{Sec2}

Throughout the paper, we will be using the following notation. For $u,v \in L^2(\mathbb{T}_L)$, let
\[
(u,v)_2 := \int_{L} u(x)\,v(x)\,dx,
\qquad
\|u\|_2 := \sqrt{(u,u)_2},
\]
where $\int_{L} u(x)\,dx$ stands for  $\int_0^{L} u(x)\,dx$.
Next, for $Q \in \R^d$ with $\partial Q \in C^{\infty}$, for $s \in [0, \infty)$, $p \in [1, \infty)$, let $W^{s,p}(Q)$ be the regular Sobolev space for $s \in \N$, and Sobolev-Slobodeckij space for non-integer $s$. For $p=2$ we will denote $W^{s,p}(Q) = H^s(Q) = H^s$. If $X$ is a Banach space, the space $C^{k+\alpha}(Q;X)$ is the space of $k$ times differentiable functions $Q \to X$, whose $k$-th derivatives are Holder-continous with exponent $\alpha \in (0,1)$ on compact subsets of $Q$. We also denote $C^{k-}(Q;X)$ to be the space of $k-1$-times differentiable functions, whose $k-1$-st derivatives are Lipschitz continuous. The space $BC^{0}(Q,X)$ is the set of bounded continuous functions.  We also denote $H^1_w(\mathbb{T}_L)$ to be the space of  $H^1(\mathbb{T}_L)$ functions endowed with the weak topology, induced by $\| \cdot \|_{1,2}$. 
Finally, for  $u: Q_T \to \R$ we denote 
\[
P_T := \{(t,x) \in \bar{Q}_T, u(t,x)>0\}.
\]

\begin{definition}\label{Def:2.1}
A {\bf weak martingale solution} of \eqref{1.1} on $[0,T]$ with
initial data $u_0 \in H^1(\mathbb{T}_L)$,
is a triple
\[
\{(\tilde{\Omega},\tilde{\mathcal{F}}, \tilde{\mathcal{F}}_t,\tilde{\mathbb{P}}), \tilde{\beta}, \tilde{u}\}
\]
such that $(\tilde{\Omega},\tilde{\mathcal{F}}, \tilde{\mathcal{F}}_t,\tilde{\mathbb{P}})$ is a filtered probability space,
$ \tilde{\beta}$ is a real--valued $(\tilde{\mathcal{F}}_t)$--Wiener process, and
$\tilde{u}$ is an $(\tilde{\mathcal{F}}_t)$--adapted continuous $H^1(\mathbb{T}_L)$--valued process
satisfying:
\begin{enumerate}
\item
\[
\mathbb{E}\!\left[\sup_{t\in[0,T]}\|\tilde{u}(t)\|_{H^1(\mathbb{T}_L)}^2\right] < \infty;
\]

\item
For almost all $(\omega,t) \in \Omega \times [0,T]$, the weak third--order derivative
$\partial_x^3 \tilde{u}$ exists on $\{\tilde{u}(t) \neq 0\}$ and satisfies
\[
\tilde{\mathbb{E}}\!\left(\int_0^T \!\!\int_{\{\tilde{u}(t) \neq 0\}}
\tilde{u}(t,x)^{2n} \bigl|\partial_x^3 \tilde{u}(t,x)\bigr|^2 \, dx \, dt \right)^{1/2}< \infty;
\]

\item
For any $\varphi \in C^\infty(\mathbb{T})$, $\tilde{\mathbb{P}}$--almost surely,
for all $t \in [0,T]$,
\begin{align}
(\tilde{u}(t),\varphi)_2
&= (u_0,\varphi)_2
+ \int_0^t \int_{\{\tilde{u}(s)>0\}}
\tilde{u}^n(s) \partial_x^3 \tilde{u}(s)\,\partial_x \varphi \, dx\,ds \notag \\
&\quad
+ \int_0^t \gamma(s)(\tilde{u}(s),\varphi)_2\,ds
+ \int_0^t \alpha(s)(\tilde{u}(s),\varphi)_2\,d \tilde{\beta}(s).
\label{2.1}
\end{align}
\end{enumerate}
\end{definition}

\medskip

The following theorem establishes the existence of a nonnegative solution of equation \eqref{1.1}
in the sense of Definition \ref{Def:2.1}.

\begin{theorem}\label{Th:2.1}
Assume $n > 0$. Then for any
$u_0 \in H^1(\mathbb{T})$, $u_0 \ge 0$, $u_0 \not\equiv 0$,
equation \eqref{1.1} admits a nonnegative weak martingale solution $\tilde{u}$, which is defined on $t \in [0,T]$ for any $T > 0$, and for any $p \geq 2$ satisfies
\[
\tilde{\E} \sup_{t \in [0,T]} \|\tilde{u}(t,\cdot)\|_2^p \leq C \| u_0\|^p_2
\]
for some $C>0$ independent of $\tilde{u}$ and $u_0$.
\end{theorem}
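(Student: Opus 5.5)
The plan follows the strategy announced in the introduction: a Trotter--Kato splitting on $[0,T]$, followed by a compactness argument and an extension to arbitrary $T$. The key observation is that the noise is linear and spatially constant. It can therefore be removed exactly by the substitution $u=\eta(t)v$, with
\[
\eta(t)=\exp\Big(\int_0^t(\gamma-\tfrac12\alpha^2)\,ds+\int_0^t\alpha\,d\beta\Big).
\]
Formally, It\^o's formula shows that $v$ solves the random thin-film equation
\[
\partial_t v=-\eta(t)^n\,\partial_x\big(v^n\partial_x^3 v\big).
\]
This is a deterministic thin-film equation for each fixed $\omega$, with a continuous positive time-dependent factor. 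Even so, I would organise the construction as a splitting scheme. On a partition of mesh $\delta=T/N$, alternate two steps:
\begin{itemize}
\item a deterministic step, solving $\partial_t w=-\partial_x(w^n\partial_x^3 w)$ with the Bernis--Friedman/Dal Passo solutions, regularised in $\varepsilon$ if needed;
\item a stochastic step, solving the linear SDE $dw=\gamma w\,dt+\alpha w\,d\beta$ explicitly, which gives $w(t)=w(t_k)\,\eta(t)/\eta(t_k)$.
\end{itemize}
Both steps preserve nonnegativity, since the second only multiplies by a positive random scalar. This yields a nonnegative approximate process $u_N$, adapted by construction.

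The uniform estimates come from two structural facts.
\begin{itemize}
\item \textbf{Mass.} The deterministic step conserves mass and dissipates the energy $J$ from \eqref{energy}. The stochastic step multiplies both $\int u$ and $\|u\|_2$ by the same scalar factor.
\item \textbf{$L^2$ norm.} By interpolation, $\|u_N(t)\|_2\le C\|u_0\|_2\,\eta(t)$ up to splitting errors. One also needs $\|w\|_2$ to be nonincreasing along the deterministic flow, or bounded by mass plus $L\cdot J$ via Poincar\'e. Then $\mathbb E\sup_{[0,T]}\|u_N\|_2^p\le \|u_0\|_2^p\,\mathbb E\sup_{[0,T]}\eta^p$.
\item \textbf{Bound on $\eta$.} Doob's inequality applied to the exponential martingale, together with the explicit moments \eqref{ast}, gives $\mathbb E\sup_{[0,T]}\eta^p\le C(T,p)$. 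This is the source of the constant $C$ independent of $u_0$ in the theorem.
\item \textbf{Higher-order quantities.} The same reasoning bounds $\mathbb E\sup\|\partial_x u_N\|_2^2$, and the dissipation $\int\!\!\int_{\{u_N>0\}} u_N^n|\partial_x^3u_N|^2$ with weight $\eta^{2}$. This gives items 1 and 2 of Definition~\ref{Def:2.1}, where $u^{2n}|\partial^3_x u|^2\le \|u\|_\infty^n\,u^n|\partial_x^3u|^2$ and $\|u\|_\infty$ is controlled by the $H^1$ norm.
\end{itemize}

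Next comes compactness. Time-regularity estimates are obtained by testing against $\varphi\in C^\infty$. The deterministic drift is bounded in $L^2$ via item 2, and the stochastic increments are controlled by Burkholder--Davis--Gundy. Together these give tightness of the laws of $(u_N,\beta)$ in $C([0,T];H^1_w)\cap C([0,T];C^{1/2-})$, via Aubin--Lions/Simon and the compact embedding $H^1\subset C^{\alpha}$. Skorokhod--Jakubowski then produces a new probability space carrying $\tilde u_N\to\tilde u$ a.s. and $\tilde\beta_N\to\tilde\beta$.

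Passing to the limit in \eqref{2.1} is the main obstacle. The nonlinear flux $\tilde u_N^n\partial_x^3\tilde u_N$ converges only weakly in $L^2(\{\tilde u>0\})$. To identify it, I would use uniform convergence of $\tilde u_N$ together with local $H^3$ bounds on the sets $\{\tilde u>\kappa\}$, in the standard Bernis--Friedman manner. The splitting consistency error, between the alternating scheme and the combined dynamics, vanishes as $\delta\to0$ because the scalar factors $\eta(t)/\eta(t_k)\to1$ uniformly. The martingale term is identified by the standard argument of convergence of quadratic variations and cross variations with $\tilde\beta$. Nonnegativity and the $L^2$ moment bound pass to the limit by lower semicontinuity.

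Finally, the solution is made global as in Step 2. The constructions on $[0,T]$ for $T\in\mathbb N$ give consistent laws, since the estimates hold uniformly on each interval. Kolmogorov's extension theorem then yields a single solution defined on every $[0,T]$.
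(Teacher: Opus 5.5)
Your construction follows the paper's route: Trotter--Kato splitting with the Dal Passo deterministic step and the explicit linear SDE step, uniform bounds from mass conservation, energy dissipation and moments of $\sup\eta$, tightness plus Skorokhod--Jakubowski, identification of the flux on $\{\tilde u>\kappa\}$, and nonnegativity inherited from both substeps. Your Doob argument is a cleaner replacement for the BDG/Gronwall step in Lemma~\ref{lem:3.4}.

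\textbf{The gap is the bound with $\|u_0\|_2$ on the right.} Your first option, that $\|v\|_2$ is nonincreasing along the deterministic flow, fails once $n>1$. For smooth positive solutions of \eqref{3.1},
\[
\frac{d}{dt}\,\frac12\|v\|_2^2=-\int_L v^n(\partial_x^2v)^2\,dx+\frac{n(n-1)}{3}\int_L v^{n-2}(\partial_xv)^4\,dx ,
\]
and the second term can dominate near sharp minima. Take $v$ equal to a smoothed $|x-x_0|^{\beta}$ near $x_0$. As the smoothing scale tends to $0$ and $\beta\downarrow 3/(n+2)$, the right-hand side becomes arbitrarily large and positive. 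So $\int v^2$ is not a Lyapunov functional for $n>1$, and only your second option is left. Mass, energy and Poincar\'e then give
\[
\mathbb{E}\sup_{[0,T]}\|u_N\|_2^p\le C\|u_0\|_{1,2}^p,
\]
not $C\|u_0\|_2^p$. The paper's own argument gets the same weaker bound: Proposition~\ref{prop:3.5} and Corollary~\ref{cor:3.9} have $\|u_0\|_{1,2}$ on the right. So neither argument delivers the $\|u_0\|_2$ form of the estimate for general $n>0$. You should claim the bound with $\|u_0\|_{1,2}^p$.

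\textbf{The final Kolmogorov paragraph.} It is unnecessary, since the theorem is for each fixed $T$, and its justification does not work. Without uniqueness in law, solutions built separately on $[0,T]$ and $[0,T+1]$ need not have consistent restrictions, whatever the uniform estimates. The paper (Section~\ref{Sec 4}) instead restarts from the terminal value on a product space.

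\textbf{A shorter route you already have.} The substitution you noticed proves the theorem with no splitting or compactness at all. Let $V$ be a global-in-time weak solution of \eqref{3.1} as in Theorem~\ref{Th:3.2}, and set $\tau(t):=\int_0^t\eta^n(s)\,ds$. Then $u(t,x):=\eta(t)V(\tau(t),x)$ is adapted, continuous and nonnegative. The map $t\mapsto(V(\tau(t)),\varphi)_2$ is absolutely continuous with derivative
\[
\eta^n(t)\int_{\{V(\tau(t))>0\}}\big(V^n\partial_x^3V\big)(\tau(t))\,\partial_x\varphi\,dx .
\]
It\^o's product rule with $d\eta=\gamma\eta\,dt+\alpha\eta\,d\beta$ then gives \eqref{2.1} for all $t\ge0$ on the original probability space, because $\eta^{n+1}V^n\partial_x^3V=u^n\partial_x^3u$. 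Moreover,
\[
\mathbb{E}\sup_{[0,T]}\|u\|_{1,2}^p\le\sup_{\tau\ge0}\|V(\tau)\|_{1,2}^p\,\mathbb{E}\sup_{[0,T]}\eta^p\le C\|u_0\|_{1,2}^p .
\]
This yields a global solution that is strong in the probabilistic sense. It likewise only gives $\|u_0\|_{1,2}$, not $\|u_0\|_2$, on the right when $n>1$.
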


The next result guarantees that the local solution, estalished in Theorem \ref{Th:2.1}, is in fact global.

\begin{theorem}\label{th:2.2}
Assume $n>2$ and $u_0 \in H^1(\mathbb{T}_L)$, is such that $u_0 \ge 0$, and
\[
\int_L u_0^{2-n}(x) \, dx < \infty.
\]
Then the equation~\eqref{1.1} admits a nonnegative martingale solution
$\tu(t)$ on $[0,\infty)$ such that for any $t>0$,
the set
\[
\{x \in L : \tu(t,x,\omega)=0\}
\]
has zero Lebesgue measure, $\tilde{\mathbb{P}}$--almost surely.
\end{theorem}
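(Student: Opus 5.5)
The plan has two parts: extend the finite-horizon solutions of Theorem \ref{Th:2.1} to $[0,\infty)$, and control the zero set through an entropy estimate adapted to the mobility exponent $n>2$.

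\textbf{Entropy functional.} Use $G(u)=\frac{1}{(n-1)(n-2)}u^{2-n}$, so that $G''(u)=u^{-n}$. Work first at the level of the approximations used in Theorem \ref{Th:2.1}: either the Trotter--Kato splitting or the regularized mobility $(u^2+\ve^2)^{n/4}$ with a positivity-preserving approximation. Apply Itô's formula to $\int_L G(u_\ve)\,dx$.

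\textbf{Deterministic part.} Integrating by parts, the fourth-order term gives
\[
-\int_L G''(u)\,u^n\,\partial_x u\,\partial_x^3 u\,dx=-\int_L \partial_x u\,\partial_x^3 u\,dx=-\int_L(\partial_x^2u)^2\,dx\le 0,
\]
a dissipative contribution.

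\textbf{Linear and Itô parts.} Since $G'(u)u=-\frac{1}{n-1}u^{2-n}$ and $G''(u)u^2=u^{2-n}$, the linear drift, the Itô correction and the martingale term are all multiples of $\int_L u^{2-n}\,dx$ itself. The resulting inequality is
\[
d\!\int_L u^{2-n}\,dx\le c_1(t)\int_L u^{2-n}\,dx\,dt+c_2(t)\int_L u^{2-n}\,dx\,d\beta(t),
\]
with continuous coefficients built from $\gamma$ and $\alpha$. By Gronwall's lemma for expectations (or by the explicit geometric-type bound analogous to \eqref{ast}),
\[
\E\sup_{t\le T}\int_L u^{2-n}\,dx\le C(T)\int_L u_0^{2-n}\,dx.
\]
The constant does not depend on the regularization parameter, so the bound passes to the limit by Fatou's lemma.

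\textbf{Zero set.} Since $2-n<0$, finiteness of $\int_L\tu^{2-n}(t,x)\,dx$ a.s. forces $\{x:\tu(t,x)=0\}$ to have measure zero. This holds for every $t$ via a.s. finiteness of the supremum, using continuity of $\tu$ in $H^1\subset C$ and lower semicontinuity of the functional.

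\textbf{Globality.} For each $N$, Theorem \ref{Th:2.1} yields martingale solutions on $[0,N]$. Their laws on $C([0,N];H^1_w)\times C([0,N])$ are tight uniformly in $N$ on each fixed interval, thanks to the moment bound in Theorem \ref{Th:2.1} and the energy/entropy bounds above. The laws can then be made consistent along a subsequence, either by a diagonal Skorokhod argument on $C([0,\infty);\cdot)$ with the metric of local uniform convergence, or by constructing a projective family and applying Kolmogorov's extension theorem. The resulting process solves \eqref{2.1} on every $[0,T]$, and passing to the limit in the identity is exactly as in the finite-interval proof.

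\textbf{Main obstacle.} The hardest step is justifying Itô's formula for the singular functional $u^{2-n}$ near $u=0$ at the approximate level. I would handle this by truncating, using $(u+\delta)^{2-n}$, or by exploiting strict positivity of the regularized solutions before sending $\delta\to0$. The sign of the fourth-order term must be preserved under the regularized mobility: with $G_\ve''=1/(u^2+\ve^2)^{n/4}$ it is preserved exactly.
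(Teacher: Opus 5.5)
Your plan is sound. Its entropy part is the paper's Lemma~\ref{lem:4.1}: the same $G$, dissipation on the deterministic steps (the paper quotes the Bernis--Friedman entropy inequality), It\^o's formula plus Gronwall on the stochastic steps, then Fatou with the uniform convergence of Corollary~\ref{cor:3.8}.

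The genuine difference is the global extension. The paper concatenates. From a solution on $[0,T]$ it restarts Theorem~\ref{Th:2.1} for each fixed $\omega_1$ from $u^1(T,\omega_1)$ on a fresh space with a new Brownian motion. It then glues on the product space, iterates, and appeals to Kolmogorov. The zero-set property is what certifies that the restart datum is admissible ($u^1(T)\not\equiv 0$).

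Your compactness route works differently: solutions on $[0,N]$ from the deterministic $u_0$, tightness on each window $[0,M]$ uniformly in $N\ge M$, and Skorokhod--Jakubowski on $C([0,\infty);\cdot)$ with the local uniform metric; the window limits are automatically consistent. It never restarts from random data. So it sidesteps the measurable-selection and filtration issues implicit in gluing $\omega_1$-dependent martingale solutions, and it gives the entropy bound on every $[0,T]$ directly by Fatou. The price is that you must redo the tightness and flux-identification arguments of Section~\ref{Sec 3} with constants depending only on $M$. This is true for the splitting scheme; for instance, $\E|\eta(t,t_0)|^p$ involves $\gamma,\alpha$ only on $[t_0,t]$. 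The cleanest implementation is to run the splitting scheme itself on $[0,\infty)$.

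Some points to tighten:

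\textbf{Tightness.} It does not follow from the moment bound of Theorem~\ref{Th:2.1} alone. You need the time-increment estimates of Lemmas~\ref{lem:3.7}--\ref{lem:3.8} for tightness, and the flux bound in \eqref{3.13} to identify the limit of $\chi_{\{u>0\}}u^n\partial_x^3u$.

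\textbf{Supremum in time.} Gronwall for expectations yields only $\sup_t\E$, which already suffices for the statement as posed. Your $\E\sup_t$ (hence ``a.s.\ for all $t$'') needs the pathwise representation the splitting provides on stochastic steps, exactly as used for \eqref{5.5}:
\[
\int_L w_N^{2-n}(t,x)\,dx=\eta(t,t_0)^{2-n}\int_L w_N^{2-n}(t_0,x)\,dx .
\]
In particular, the ``main obstacle'' you name does not arise there.

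\textbf{Signs.} In the dissipation computation the correct chain is
\[
\frac{d}{dt}\int_L G(u)\,dx=\int_L G''(u)u^n\partial_xu\,\partial_x^3u\,dx=\int_L\partial_xu\,\partial_x^3u\,dx=-\int_L(\partial_x^2u)^2\,dx .
\]

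\textbf{Regularized mobility.} Drop this alternative. Theorem~\ref{Th:2.1} is built on the splitting, and $(u^2+\ve^2)^{n/4}$ behaves like $|u|^{n/2}$, so your $G_\ve$ would not approximate the $u^{2-n}$ entropy.
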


\medskip

Setting $\varphi \equiv 1$ in \eqref{2.1}, we obtain for almost every $\omega$
the following stochastic equation for the mass:
\begin{equation}
\int_{L} \tu(t,x)\,dx
=
\int_{L} u_0(x)\,dx
+
\int_0^t \gamma(s) \int_{L} \tu(s,x)\,dx\,ds
+
\int_0^t \alpha(s) \int_{L} \tu(s,x)\,dx\,d \tilde{\beta}(s).
\label{2.2}
\end{equation}
Solving the linear equation \eqref{2.2} yields
\begin{equation*}
\int_{L} \tu(t,x)\,dx = \eta(t) \int_{L} u_0(x)\,dx,
\label{2.3}
\end{equation*}
where $\eta(t)$ is defined in \eqref{1.3}.

The main results concerning the asymptotic behavior of the solutions as $t \to \infty$ are contained in the theorems below.

\begin{theorem} \label{Th:2.3}[Large perturbations] 
Assume $n \ge 4$. 
\begin{itemize}
    \item 
If
\begin{equation}
\int_0^\infty \alpha^2(t) \, dt < \infty, \text { while }
\qquad
\int_0^\infty \gamma(t) \, dt = \infty,
\label{2.4}
\end{equation}
then
\[
\|u(t,\cdot, \omega)\|_{L^\infty} \to 0
\quad \text{as } t \to \infty
\]
both in square mean and with probability 1. 
\item If 
\begin{equation}\label{2.6}
\int_0^\infty \alpha^2(t) \, dt < \infty, \text{ and } \limsup_{t \to \infty} \frac{\int_0^t [\gamma(s) - \frac{\alpha^2(s)}{2}] \, ds}{[2 \int_0^t \alpha^2(s) ds \ln \ln \int_0^t 4 \alpha^2(s) ds]^{1/2}} < -1
\end{equation}
then
\[
\|u(t,\cdot, \omega)\|_{L^\infty} \to 0
\quad \text{as } t \to \infty
\]
with probability 1. 
\end{itemize}
\end{theorem}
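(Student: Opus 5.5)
The plan is to split $u$ into its spatial mean and its fluctuation. Write $\tu(t,x)=\bar u(t)+v(t,x)$, where $\bar u(t)=\frac1L\int_L\tu(t,x)\,dx=\bar u_0\,\eta(t)$ by the mass identity \eqref{2.2}. Since $\int_L v\,dx=0$ and we are in one space dimension, the Poincaré and Sobolev inequalities give
\[
\|v(t)\|_{L^\infty}^2\le L\,\|\partial_x \tu(t)\|_2^2 = 2L\,J[\tu(t)] ,
\]
so that
\[
\|\tu(t)\|_{L^\infty}\le \bar u_0\,\eta(t)+\sqrt{2L\,J[\tu(t)]} .
\]
It therefore suffices to show two things: that $\eta(t)\to 0$ in the appropriate sense, and that the energy $J[\tu(t)]$ tends to zero in square mean and almost surely.

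\textbf{Step 1: the mass term.} Put $\Lambda(t)=\int_0^t(\gamma-\frac12\alpha^2)\,ds$ and $M(t)=\int_0^t\alpha\,d\tilde\beta$, so that $\eta=\exp(\Lambda+M)$.

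In the first case, $\int_0^\infty\alpha^2<\infty$, so $M(t)$ is an $L^2$-bounded martingale. It converges almost surely and in $L^2$ to a finite limit, and $\E\eta^2=\exp(2\Lambda+2\int_0^t\alpha^2)$ by \eqref{ast}. Hence, as $t\to\infty$, we get $\eta(t)\to0$ both almost surely and in $L^2$; here we read the hypothesis $\int_0^\infty\gamma=\infty$ in \eqref{2.4} as forcing $\Lambda(t)\to-\infty$, i.e. we take $\gamma$ to be the dissipative coefficient and check the sign convention against \eqref{2.2}.

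In the second case we represent $M(t)=B(\langle M\rangle_t)$ with $B$ a Brownian motion, via Dambis--Dubins--Schwarz. If $\langle M\rangle_\infty<\infty$, the same argument as above applies. If $\langle M\rangle_\infty=\infty$, the law of the iterated logarithm gives $\limsup M/\sqrt{2\langle M\rangle\ln\ln\langle M\rangle}=1$. Combined with \eqref{2.6}, this yields $\Lambda+M\to-\infty$ almost surely, so $\eta(t)\to0$ almost surely. No moment statement is claimed in this case, consistent with the theorem.

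\textbf{Step 2: energy dissipation.} We apply Itô's formula to $J[\tu]$ at the level of the regularized (Galerkin / Trotter--Kato) approximations from Sections \ref{Sec 3}--\ref{Sec 5}, and then pass to the limit using lower semicontinuity. Because the noise is linear and multiplicative, $\alpha u\,d\beta$, the formula reads
\[
dJ=-\int_L u^n(\partial_x^3u)^2\,dx\,dt+\bigl(2\gamma+\alpha^2\bigr)J\,dt+2\alpha J\,d\beta .
\]
We then need to convert the dissipation term into a multiple of $J$. The idea is to bound $\int_L u^n(u_{xxx})^2$ from below in terms of $J$. For this we use $u\ge \bar u-\|v\|_\infty$, and for $n\ge4$ the weighted interpolation inequalities of Bertozzi--Pugh / Tudorascu type, e.g. $\int (u_{xx})^2\le C\|u_{xxx}\|_2\|u_x\|_2$ together with the entropy bound from Theorem \ref{th:2.2}. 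With these we aim for a differential inequality
\[
dJ\le\bigl(2\gamma+\alpha^2-c\,\kappa(t)\bigr)J\,dt+2\alpha J\,d\beta ,
\]
with $\kappa\ge0$. The crude alternative is to drop the dissipation entirely and compare $J(t)$ with $J(0)\exp\bigl(2\Lambda+2M+\ldots\bigr)$. This gives $J(t)\le J(0)\,\eta(t)^2 e^{2\int_0^t\alpha^2}$, so $J\to0$ wherever $\eta\to0$ with the same integrability. This route avoids the degenerate coercivity entirely. I would try it first, since $\int_0^\infty\alpha^2<\infty$ keeps the extra factor bounded. To make it rigorous, we apply Itô's formula to $e^{-2(\Lambda+M)}J$ and show that this product is a nonnegative supermartingale; its almost sure convergence, together with the control of its expectation, then gives $J\to0$ almost surely and in mean.

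\textbf{Main obstacle.} The hard part is justifying the energy Itô formula and the supermartingale property for the degenerate limit solution. We only know $\tu^{n}(\partial_x^3\tu)^2$ is integrable on $\{\tu>0\}$, and the equation holds only in the weak sense \eqref{2.1}. I would handle this by deriving the inequality for the $\ve$-regularized mobility $(u^2+\ve^2)^{n/4}$, where the dissipation term has a favorable sign, and then passing to the limit via the Skorokhod representation, using weak lower semicontinuity of $J$ in $H^1_w$. The condition $n\ge4$ enters here: it makes the regularized entropy estimates uniform, which ensures positivity and hence that the limit satisfies the inequality. Once this is in place, combining Steps 1 and 2 through the decomposition above gives both bullets of the theorem.
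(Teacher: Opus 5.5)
Your route is essentially the paper's. The mass solves the linear SDE \eqref{2.2}, so $\bar u(t)=\bar u_0\eta(t)$; dropping the dissipation bounds $\|\partial_x u\|_2^2$ by the solution of a linear SDE; and Poincar\'e--Sobolev combines the two. You replace Khasminskii's theorem with martingale convergence plus Dambis--Dubins--Schwarz and the iterated logarithm, and the SDE comparison theorem with an integrating factor; that is a fine, more self-contained variant.

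However, one step in the second bullet fails. Your sub-case ``if $\langle M\rangle_\infty<\infty$, the same argument as above applies'' needs $\Lambda(t)\to-\infty$. Condition \eqref{2.6} does not give this when $\int_0^\infty\alpha^2<\infty$: the denominator then tends to a finite constant $c$, so you only get $\limsup_{t\to\infty}\Lambda(t)<-c$. Concretely, take $\alpha^2=4e^{-t}$ and $\gamma=\frac12\alpha^2-10e^{-t}$. These satisfy \eqref{2.6}, since the ratio tends to $-10/\sqrt{8\ln\ln 16}\approx-3.5$. Yet $\eta(t)\to e^{-10+M_\infty}>0$ almost surely, so $\bar u(t)\le\|u(t)\|_{L^\infty}$ does not tend to $0$. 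Since \eqref{2.6} literally assumes $\int_0^\infty\alpha^2<\infty$, this is the only case that arises, and the bullet as written is false. It has to be read with $\int_0^\infty\alpha^2\,dt=\infty$, which is the hypothesis of part (ii) of Khasminskii's theorem that the paper invokes (and the ``high intensity noise'' of the Remark). Under that reading only your DDS/LIL sub-case occurs, and it is correct, since $\ln\ln(4A)/\ln\ln A\to1$. Likewise, do not hedge in the first bullet. With $+\gamma$ in \eqref{2.2} the hypothesis must be $\int_0^\infty\gamma\,dt=-\infty$: for $\alpha\equiv0$, $\gamma\equiv1$ the mass is $e^t\int_L u_0$. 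This is also how the paper's proof actually uses it.

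Second, your ``crude'' bound $J(t)\le J(0)\eta(t)^2e^{2\int_0^t\alpha^2}$ carries a spurious factor, which would be unbounded in the corrected second bullet. Your integrating-factor computation gives more than a supermartingale. It\^o's formula yields $d(\eta^{-2}J)=-\eta^{-2}\int_L u^n(\partial_x^3u)^2\,dx\,dt$ with no martingale part, so $J(t)\le J(0)\eta(t)^2$ pathwise. This is exactly the paper's bound $\|\partial_x u\|_2^2\le y(t)=\|\partial_x u_0\|_2^2\eta(t)^2$. It gives $\mathbb{E}J(t)\le J(0)\mathbb{E}\eta(t)^2$ as well as $J\to0$ almost surely in both cases.

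On justifying It\^o's formula: the paper simply applies it to the limit solution via Corollary \ref{cor:5.2}. Your $(u^2+\varepsilon^2)^{n/4}$ regularization is not the scheme used here. In the Trotter--Kato scheme the bound comes almost for free. Deterministic sub-steps do not increase $\|\partial_x v_N\|_2$, by \eqref{3.4*} with $p=2$. Stochastic sub-steps multiply $\|\partial_x w_N\|_2^2$ exactly by $\eta(t,t_0)^2$, by \eqref{3.8}. Hence $\|\partial_x u_N\|_2^2\le\|\partial_x u_0\|_2^2\,\eta_N^2$ with $\eta_N\to\eta$ locally uniformly, and weak lower semicontinuity in $H^1$ passes this to the limit. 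No positivity is needed for this step.
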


\begin{remark}
    The second condition in \eqref{2.6} can be satisfied even for positive $\gamma(t)$, for example, if 
    \[
\gamma(t) \leq (\frac12 - \ve)   \alpha^2(t), 0 \leq \ve \leq \frac12. 
\]
Physically this means that even if the mass is being added to the film externally, the  presence of a high intensity noise (first condition in \eqref{2.6}) still ruins the film. 
\end{remark}

\begin{theorem}\label{Th:2.5}[Bounded perturbations]
Assume $n \ge 4$. If 

\begin{equation}
\int_0^\infty \alpha^2(t) \, dt < \infty, \text { and }
\qquad
\int_0^\infty \gamma(t) \, dt < \infty,
\label{2.7}
\end{equation}
as well as 
\begin{equation}\label{2.7ast}
2 \gamma(t) + \alpha^2(t) \to 0, \ t \to \infty
\end{equation}
then
\begin{equation}
\mathbb{E}\|u(t,\cdot) - \eta(t) \bar{u}_0\|_{L^\infty}^2
\to 0
\quad \text{as } t \to \infty,
\label{2.8}
\end{equation}
and $\eta(t)$ is defined in \eqref{1.3}.

\end{theorem}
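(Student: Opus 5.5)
The plan is to split $u(t)-\eta(t)\bar u_0$ into its mean part and its oscillation, and to control the oscillation through the energy $J[u]$ from \eqref{energy}. By \eqref{2.2} and its solution, the spatial mean of $\tu(t)$ equals exactly $\eta(t)\bar u_0$. Hence $v(t):=\tu(t)-\eta(t)\bar u_0$ has zero mean on $\mathbb{T}_L$. The one-dimensional Poincaré--Sobolev inequality then gives
\[
\|v(t)\|_{L^\infty}^2\le L\,\|\partial_x \tu(t)\|_2^2 = 2L\,J[\tu(t)].
\]
So \eqref{2.8} will follow once we show $\E J[\tu(t)]\to 0$ as $t\to\infty$. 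Everything reduces to an energy estimate in expectation for the global solution from Theorem \ref{th:2.2}.

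\textbf{Step 1: energy identity.} Apply Itô's formula to $J[u]$. We work at the level of the regularized/Galerkin approximations used in Sections \ref{Sec 3}--\ref{Sec 5} and pass to the limit by lower semicontinuity, which turns the identity into an inequality. Since the noise is linear and spatially constant, $\partial_x u$ satisfies $d(\partial_x u)=\partial_x(\dots)dt+\gamma\partial_x u\,dt+\alpha\partial_x u\,d\beta$. This yields
\[
dJ=-\int_L u^n(\partial_x^3u)^2\,dx\,dt+(2\gamma+\alpha^2)J\,dt+2\alpha J\,d\beta.
\]
The martingale term vanishes in expectation, using the moment bounds of Theorem \ref{Th:2.1} together with the $H^1$ bound in Definition \ref{Def:2.1}.

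\textbf{Step 2: coercive lower bound for the dissipation.} This is the main obstacle: we need to bound the dissipation from below in terms of $J$. Write $m(t)=\min_x \tu(t,x)$. Then
\[
\int u^n(\partial_x^3u)^2\ge m^n\|\partial_x^3u\|_2^2\ge m^n (2\pi/L)^4\, 2J,
\]
using Poincaré twice for periodic zero-mean derivatives. For $m$ we use $m\ge \eta\bar u_0-\|v\|_\infty\ge \eta\bar u_0-\sqrt{2LJ}$. An alternative route, which is where $n\ge4$ should enter, is to use the entropy bound $\int u^{2-n}\,dx<\infty$ from Theorem \ref{th:2.2}. This gives a lower bound on $\int u^n(\partial_x^3 u)^2$ by a power of $J$ divided by a power of the entropy, following the deterministic argument of \cite{BertPugh,Tud}, with the entropy controlled in expectation via its own Itô identity. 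I would first try the simpler route. Under \eqref{2.7}, $\eta(t)$ converges a.s. to a positive limit $\eta_\infty$, since both exponents converge and a martingale with bounded quadratic variation converges. Moreover $J$ is bounded in mean by Gronwall, because $\int_0^\infty|2\gamma+\alpha^2|<\infty$ (interpreting \eqref{2.7} as absolute integrability). So on large events the dissipation dominates $cJ$ once $J$ is small relative to $\eta_\infty^2\bar u_0^2$.

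\textbf{Step 3: Gronwall and conclusion.} Take expectations to get
\[
\frac{d}{dt}\E J\le -\E[c\,m^n J]+(2\gamma+\alpha^2)\E J.
\]
The integrability of $J$ in time comes from integrating the identity: $\int_0^\infty\E\int u^n(\partial_x^3u)^2<\infty$, which together with the lower bound gives $\int_0^\infty \E[m^nJ]\,dt<\infty$. Because $2\gamma+\alpha^2\to0$ by \eqref{2.7ast}, $t\mapsto \E J(t)$ is almost monotone: its increments are controlled by $\int_s^t|2\gamma+\alpha^2|\,\E J$. Hence $\lim\E J$ exists. Combined with $\int^\infty \E[m^nJ]<\infty$ and the a.s. positivity of $\eta_\infty$, this forces the limit to be $0$; uniform integrability comes from the $p$-moment bounds of Theorem \ref{Th:2.1} applied to the $H^1$ norm. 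Plugging $\E J(t)\to0$ into the Poincaré--Sobolev bound gives \eqref{2.8}. The delicate part is making Step 2 rigorous in expectation, since $m$ and $J$ are correlated random quantities. If the simple route fails, I would use a stopping-time/localization argument on the event $\{\inf_{s\ge t_0}\eta(s)\ge\delta\}$, whose probability tends to $1$ as $\delta\to0$.
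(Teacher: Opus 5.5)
Your reduction to $\E\|\partial_x u(t)\|_2^2\to0$ (the mean of $u(t)$ is $\eta(t)\bar u_0$, followed by Poincar\'e--Sobolev), the energy identity, and the uniform-in-time bound on $\E\|\partial_x u\|_2^2$ all match the paper. The gap is the ``simpler route'' of Step~2, on which your Step~3 rests.

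The bound $\int_L u^n(\partial_x^3u)^2\,dx\ge c\,m^nJ$ with $m\ge\eta\bar u_0-\sqrt{2LJ}$ is coercive only where $J\lesssim\eta^2\bar u_0^2/(2L)$, and nothing in your argument drives $J$ into that regime. From $\int_0^\infty\E[m^nJ]\,dt<\infty$ you can extract a sequence $t_k$ along which $J(t_k)\mathbf 1_{\{J(t_k)\le\eta^2(t_k)\bar u_0^2/(8L)\}}\to0$ in probability. That is essentially all you get. The following scenario is consistent with every fact you use: on an event of non-vanishing probability, $J(t_k)$ stays above $\eta^2\bar u_0^2/(8L)$ while $\min_x u(t_k)\to0$, i.e.\ near-touchdown with a non-small gradient. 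So ``this forces the limit to be $0$'' does not follow. Already for $\gamma=\alpha=0$ and $J(0)>\bar u_0^2/(2L)$ your argument yields nothing. Localizing on $\{\inf_{s\ge t_0}\eta(s)\ge\delta\}$ does not help either, because the obstruction is the size of $J$ relative to $\eta^2$, not the smallness of $\eta$.

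The missing ingredient is a time-uniform control against touchdown, namely the entropy. The paper applies It\^o's formula to $\int_L u^{2-n}\,dx$ and compares with a linear SDE. Under the integrability assumptions on $\gamma$ and $\alpha^2$ this gives $\E\int_L u^{2-n}(t,x)\,dx\le C_1(u_0)$ for all $t$. It then uses a Tudorascu-type inequality. Integrate $u\,\partial_x^3u$ from a minimum point $x_0$ of $u(t,\cdot)$ and integrate by parts, then apply Cauchy--Schwarz jointly in $x$ and $\omega$. The result is, up to a constant, $\bigl(\E\|\partial_xu\|_2^2\bigr)^2\le C\,\E\int_L u^{2-n}\,dx\;\E\int_L u^n(\partial_x^3u)^2\,dx$. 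This is exactly how the correlation problem you flagged is resolved. Combined with the entropy bound, it gives the closed inequality $J'\le(2\gamma+\alpha^2)J-cJ^2$ for $J(t)=\E\|\partial_xu(t)\|_2^2$. Boundedness of $J$ together with $2\gamma+\alpha^2\to0$ then forces $J\to0$. You name this route as an alternative but do not carry it out; with it, the rest of your outline goes through. If you prefer to keep your $m^nJ$ argument, you still need the same entropy bound. For $n\ge4$, the conditions $\int_L u^{2-n}\le R$ and $J\le R$ (via the $C^{1/2}$ bound) give a quantitative $\min_xu\ge\mu_R>0$. Markov's inequality together with uniform second moments of $J$ (from It\^o's formula for $J^2$) then makes the complementary event negligible.
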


\begin{remark}\label{rem:1}
    The convergence \eqref{2.8} differs from the classic convergence to the mean, i.e. \eqref{1.3}, due to the presence of the weight $\eta(t)$. However, we may obtain the convergence result  \eqref{1.3} with probability 1 for the solution $u(t,x)$ of the stochastic thin film  equation with quadratic mobility, which has the form
\begin{equation}\label{1.2*}
du(t,x) = \left(\partial_x(-u(t,x)^{2}u_{xxx}(t,x)) + \frac{1}{2}  \partial_{xx}  u(t,x) \right)dt +
\partial_x u(t,x) \, d \beta(t).
\end{equation}
The a.e. exponential decay of the energy \eqref{energy} also holds for the solutions of \eqref{1.2*}.
\end{remark}

\section{Local Existence of Solutions (Step 1) } \label{Sec 3}

In this section we prove Theorem \ref{Th:2.1}. The strategy of the proof is to apply a Trotter--Kato type decomposition of the dynamics into deterministic and stochastic parts. This method was used by a number of authors, e.g.\ \cite{Gess1,FisGru},
in particular to establish the existence of SPDEs with locally Lipschitz coefficients. For the convenience of the reader, we split the proof into the
deterministic and stochastic parts, outlined below.\\

{\bf Deterministic dynamics.} Consider the following deterministic equation:
\begin{equation}
\begin{cases}
\partial_t v = -\partial_x \bigl( v^n \partial_x^3 v \bigr), \\
v(0) = u_0.
\end{cases}
\label{3.1}
\end{equation}

The following results follows from \cite{DalPas}, Proposition 1.1.

\begin{theorem}[Existence of weak solutions]\label{Th:3.2}
Assume $u_0 \in H^1(\mathbb{T}_L)$, $u_0 \ge 0$, and $u_0 \not\equiv 0$.
Then the equation \eqref{3.1} has a solution 
$v(t,x): [0,T) \times \mathbb{T}_L \to [0, \infty)$, which satisfies
\begin{equation*}
\int_0^T \!\!\int_{L} v \,\partial_t \varphi \, dx\,dt
+ \int_0^T \!\!\int_{v(t, \cdot)>0}
v^n \partial_x^3 v \,\partial_x \varphi \, dx\,dt
= 0
\end{equation*}
for all $\varphi \in C_c^\infty((0,T); C^{\infty}(\mathbb{T}_L))$.
This solution $u$ satisfies the following properties:
\begin{enumerate}
\item $v \in C^{\frac18,\frac12}\!([0,T] \times \mathbb{T}_L)$;
\item $v(0)=u_0$ in the sense that
\[
\|v(t)-u_0\|_{1,2} \to 0 \quad \text{as } t\to 0; 
\]
\item $v \in L^\infty\!\left([0,T]; H^1(\mathbb{T}_L)\right)$;
\item $v^n \partial_x^3 v \in L^2(\{v>0\})$;
\item Mass conservation:
\[
\int_{L} v(t,x)\,dx
=
\int_{L} u_0(x)\,dx,
\qquad t\in[0,T].
\]
\end{enumerate}
\end{theorem}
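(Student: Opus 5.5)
The statement is Theorem~\ref{Th:3.2}, which the paper attributes to \cite{DalPas}, Proposition~1.1. The plan is therefore to reproduce the standard regularization and compactness argument of Bernis--Friedman and Dal Passo--Garcke--Grün, adapted to the torus, rather than anything new.

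\textbf{Regularized problem.} For $\ve>0$ I would consider
\[
\partial_t v_\ve=-\partial_x\bigl(f_\ve(v_\ve)\partial_x^3 v_\ve\bigr),
\qquad f_\ve(s)=\frac{|s|^{n+4}}{\ve |s|^n+|s|^4},
\]
with smoothed initial data $u_{0\ve}=u_0+\ve^{\theta}$, $\theta\in(0,\tfrac{1}{2})$. Two features of $f_\ve$ are used: it is strictly positive for $s\neq 0$, and it behaves like $s^4/\ve$ near $0$. Classical theory for nondegenerate fourth-order parabolic equations, via Schauder estimates and a continuation argument, gives a smooth positive local solution. Integrating over $\mathbb{T}_L$ shows that the mass is exactly conserved.

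\textbf{A priori estimates.} Two estimates carry the argument.
\begin{enumerate}
\item \emph{Energy identity.} Testing with $-\partial_x^2 v_\ve$ gives
\[
\tfrac12\|\partial_x v_\ve(t)\|_2^2+\int_0^t\!\!\int_L f_\ve(v_\ve)|\partial_x^3 v_\ve|^2=\tfrac12\|\partial_x u_{0\ve}\|_2^2.
\]
Together with mass conservation and Poincaré, this bounds $v_\ve$ in $L^\infty(0,T;H^1)$ uniformly in $\ve$, which is item 3.
\item \emph{Entropy estimate.} Take $G_\ve$ with $G_\ve''=1/f_\ve$. Then
\[
\frac{d}{dt}\int_L G_\ve(v_\ve)+\int_L|\partial_x^2 v_\ve|^2=0 .
\]
Since $f_\ve\sim s^4/\ve$ near $0$, $G_\ve$ blows up there, and this forces $v_\ve\ge 0$ in the limit.
\end{enumerate}
Hölder regularity then follows from the $H^1$ bound. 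The space regularity $C^{1/2}$ comes from Sobolev embedding in one dimension. The time regularity $C^{1/8}$ comes from the equation: for a cutoff $\varphi$ at scale $\delta$ around $x_0$, one bounds $\int_L(v(t_2)-v(t_1))\varphi$ using the flux bound $\|f_\ve^{1/2}\partial_x^3 v_\ve\|_{L^2}$. Then $\delta$ is optimized against the $C^{1/2}$ spatial modulus.

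\textbf{Passage to the limit.} By Arzelà--Ascoli, $v_\ve\to v$ uniformly on $\bar Q_T$. The entropy bound gives $v\ge0$. The energy bound gives $f_\ve(v_\ve)^{1/2}\partial_x^3 v_\ve\rightharpoonup$ some limit weakly in $L^2$.

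The main obstacle is identifying the flux $f_\ve(v_\ve)\partial_x^3 v_\ve$ with $v^n\partial_x^3 v$ on $P_T=\{v>0\}$. I would do this as follows.
\begin{itemize}
\item On compact subsets of $P_T$, uniform convergence gives $v_\ve\ge c>0$. The equation is then uniformly parabolic there, so interior estimates give $\partial_x^3 v_\ve\to\partial_x^3 v$ locally, and $f_\ve(v_\ve)\to v^n$.
\item The uniform weighted $L^2$ bound gives equi-integrability of the flux. Hence the contribution near $\{v=0\}$ vanishes, and the limit flux is $0$ off $P_T$.
\end{itemize}
This yields the weak formulation with the integral over $\{v>0\}$ and item 4. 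Item 2, convergence to $u_0$ in $H^1$, follows from three facts: weak continuity into $H^1$, the energy inequality $\|\partial_x v(t)\|_2\le\|\partial_x u_0\|_2$, and weak lower semicontinuity, which together upgrade weak to strong convergence as $t\to0$. Mass conservation passes to the limit directly under uniform convergence, which gives item 5.
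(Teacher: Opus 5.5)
Your proposal is correct and follows essentially the route the paper indicates. The paper invokes Dal Passo--Garcke--Gr\"un and records the same ingredients you use: the regularized mobility $f_\varepsilon$, smooth positive approximate solutions $v_\varepsilon$, the energy identity, and the uniform limit $v_\varepsilon\to v$ on $\overline{Q_T}$. Two small corrections. First, $f_\varepsilon(s)\sim s^4/\varepsilon$ near $0$ only when $n<4$; what you actually need, for every $n$, is the bound $f_\varepsilon(s)\le s^4/\varepsilon$, which gives $G_\varepsilon(s)\ge \varepsilon/(6s^2)-C$ and hence a positive lower bound for each fixed-$\varepsilon$ solution, so $v\ge 0$ follows at once from uniform convergence. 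Second, $u_0+\varepsilon^\theta$ is only in $H^1$, so you should mollify it, as the paper does, before appealing to Schauder theory.
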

\medskip

The solution is obtained as a uniform limit $v_\varepsilon \to v$  in $\overline{Q_T}$, where $v_\ve$  is the unique positive smooth solution of
\begin{equation*}
\begin{cases}
\partial_t v_\varepsilon
= -\partial_x\!\left(
f_\varepsilon(v_\varepsilon) \partial_x^3 v_\varepsilon
\right), \\[0.3em]
v_\varepsilon(0,x) = u_{0,\varepsilon}(x),
\end{cases}
\label{3.2}
\end{equation*}
where $u_{0,\varepsilon}\in C^\infty(\mathbb{T}_L)$,
$u_{0,\varepsilon}>0$, and
$u_{0,\varepsilon}\to u_0$ in $H^1(\mathbb{T}_L)$ as $\varepsilon\to 0$. Moreover, the following a priori estimate holds:
\begin{equation*}
\|\partial_x v_\varepsilon(t)\|_{L^2}^2
+ 2\int_0^t \!\!\int_{L}
f_\varepsilon(v_\varepsilon)
|\partial_x^3 v_\varepsilon|^2
\,dx\,ds
=
\|\partial_x u_{0\varepsilon}\|_{L^2}^2,
\end{equation*}
for all $t\in[0,T]$, where
\begin{equation*}
f_\varepsilon(s)
=
\frac{s^{n+4}}{\varepsilon s^{n} + s^4}.
\label{eq:feps}
\end{equation*}

%%%%%%%%%%%%%%%%%%%%%%%%%%%%%%%%%%%%%%%%

% The argument continues on the next page.

Following Corollary 2.2 \cite{Gess1}, we have

\begin{corollary}
\label{cor:estimate}
Under the conditions of Theorem \ref{Th:3.2}, the solution of problem \eqref{3.1} satisfies

\begin{equation}
\label{3.4*}
\|\d_x v(t, \cdot)\|_{L^2}^p
+ 2 \int_0^t \|\d_x v(t, \cdot)\|_{L^2}^{p-2}
\int_{\{v(s, \cdot)>0\}} v^n(s,x) |\d_x^3 v(s,x)|^2\, dx \, ds
\le
\|\d_x v_0\|_{L^2}^p
\end{equation}
for all $t \in [0,T]$ and $p \ge 2$.
\end{corollary}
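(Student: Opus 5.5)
The approach is to prove the estimate first for the smooth regularized solutions $v_\varepsilon$ and then pass to the limit $\varepsilon \to 0$. For $v_\varepsilon$ we have the energy identity stated above, or in differential form
\[
\frac{d}{dt}\|\d_x v_\varepsilon(t)\|_{L^2}^2 = -2\int_L f_\varepsilon(v_\varepsilon)|\d_x^3 v_\varepsilon|^2\,dx \le 0 .
\]
To get this, multiply the regularized equation by $-\d_x^2 v_\varepsilon$, integrate by parts on the torus, and use the smoothness and positivity of $v_\varepsilon$.

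Set $E_\varepsilon(t) := \|\d_x v_\varepsilon(t)\|_{L^2}^2$. It is absolutely continuous and nonincreasing, so for $p \ge 2$ the chain rule applied to $E_\varepsilon^{p/2}$ gives
\[
\frac{d}{dt}E_\varepsilon^{p/2} = \frac p2 E_\varepsilon^{\frac p2 -1}E_\varepsilon' = -p\,\|\d_x v_\varepsilon\|_{L^2}^{p-2}\int_L f_\varepsilon(v_\varepsilon)|\d_x^3 v_\varepsilon|^2\,dx .
\]
Integrating on $[0,t]$ yields
\[
\|\d_x v_\varepsilon(t)\|_{L^2}^p + p\int_0^t \|\d_x v_\varepsilon(s)\|_{L^2}^{p-2}\int_L f_\varepsilon(v_\varepsilon)|\d_x^3 v_\varepsilon|^2\,dx\,ds = \|\d_x u_{0,\varepsilon}\|_{L^2}^p .
\]
Since $p \ge 2$, the coefficient $p$ may be replaced by $2$ as an inequality. (The time variable inside the norm in \eqref{3.4*} is evidently meant to be $s$.)

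The main obstacle is the limit $\varepsilon\to0$, since the inequality must survive lower semicontinuity. The right-hand side converges to $\|\d_x u_0\|_{L^2}^p$ because $u_{0,\varepsilon}\to u_0$ in $H^1$. For the first term, the $H^1$ bound together with the uniform convergence $v_\varepsilon\to v$ gives $\d_x v_\varepsilon(t)\rightharpoonup \d_x v(t)$ weakly in $L^2$ for each $t$. Weak lower semicontinuity then gives $\|\d_x v(t)\|_{L^2}\le\liminf_{\varepsilon}\|\d_x v_\varepsilon(t)\|_{L^2}$.

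For the dissipation term, I would localize to the positivity set. On $\{v>\delta\}$, uniform convergence gives $v_\varepsilon>\delta/2$ for small $\varepsilon$, so $f_\varepsilon(v_\varepsilon)\to v^n$ uniformly there. The bound on $\int f_\varepsilon(v_\varepsilon)|\d_x^3 v_\varepsilon|^2$ then gives a uniform $L^2$ bound on $\d_x^3 v_\varepsilon$ on $\{v>\delta\}$. Hence $f_\varepsilon(v_\varepsilon)^{1/2}\d_x^3 v_\varepsilon \rightharpoonup v^{n/2}\d_x^3 v$ weakly in $L^2(\{v>\delta\})$, with the limit identified distributionally.

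To include the weight, I would use weak lower semicontinuity of the functional $(g,h)\mapsto\int_0^t\int g\,h^2$ with $g\ge0$, where $g_\varepsilon=\|\d_x v_\varepsilon(s)\|_{L^2}^{p-2}$. The catch is that $g_\varepsilon$ converges only via a liminf, so to avoid needing strong convergence I would argue as follows. Pass to a subsequence along which $\|\d_x v_\varepsilon(s)\|_{L^2}\to a(s)$ for a.e. $s$; this is available by Helly's theorem since these are monotone functions of $s$. By lower semicontinuity, $a(s)\ge\|\d_x v(s)\|_{L^2}$. The weights $g_\varepsilon$ then converge boundedly a.e., so $g_\varepsilon^{1/2}f_\varepsilon(v_\varepsilon)^{1/2}\d_x^3 v_\varepsilon$ converges weakly to $a^{(p-2)/2}v^{n/2}\d_x^3 v$, and weak lower semicontinuity of the $L^2$ norm applies. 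Finally, let $\delta\to0$ by monotone convergence to recover the integral over $\{v>0\}$. Combining the three limits gives \eqref{3.4*}.
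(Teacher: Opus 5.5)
Your proof is correct, and it is essentially the argument the paper relies on. The paper gives no proof of its own; it cites Corollary~2.2 of \cite{Gess1} and records, just before the statement, the energy identity for the positive regularized solutions $v_\varepsilon$, which is exactly the input for your route: the chain rule on $\|\partial_x v_\varepsilon\|_{L^2}^{p}$, then the limit $\varepsilon\to0$ by lower semicontinuity (your argument even gives the stronger constant $p$ in place of $2$).
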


{\bf Stochastic dynamics.} Let $(\Omega,\mathcal{F},(\mathcal{F}_t)_{t\in [0,T]},\mathbb{P})$
be a filtered probability space, where $(\mathcal{F}_t)_{t\in[0,T]}$
is a complete right--continuous filtration, and let $\beta(t)$ be a standard
real--valued $\mathcal{F}_t$--Wiener process. For all $x \in [0,L]$ consider the following linear stochastic equation
on $[0,T]$:
\begin{equation}
\label{3.4}
dw = \gamma(t) w \, dt + \alpha(t) w \, d\beta(t),
\end{equation}
with
\[
w(t_0, x) \in L^p(\Omega,\mathcal{F}_0,\mathbb{P}; H^1(\mathbb{T}_L)).
\]
This equation has a unique solution for $t \geq t_0$, given by
\begin{equation}
\label{3.5}
w(t,x)
=
w(t_0,x)
\exp\left\{
\int_{t_0}^t \left( \gamma(s) - \tfrac12 \alpha^2(s) \right) ds
+ \int_{t_0}^t \alpha(s) \, d\beta(s)
\right\},
\end{equation}

whose regularity in $x$ is solely determined with the regularity of $w(t_0,x)$. 
\begin{lemma}
\label{lem:3.4}
Let $p \in [2,\infty)$ and
\[
w(t_0,x): = w_0 \in L^p(\Omega,\mathcal{F}_0,\mathbb{P}; H^1(\mathbb{T}_L)).
\]
Then for all $0 \leq t_0 \leq t \leq T$ the following properties hold:
\begin{enumerate}
\item
\begin{equation}
\label{3.6}
\mathbb{E} \sup_{s \in [t_0,t]} \|w(s, \cdot)\|_{1,2}^p
\le e^{C_1(t-t_0)} \mathbb{E} \|w (t_0, \cdot)\|_{1,2}^p.
\end{equation}

\item
If $w_0 \ge 0$ almost surely, then
$w(t,x) \ge 0$ almost surely for all $t \in [0,T]$.

\item 
\begin{equation}
\label{3.7}
\mathbb{E}\|\d_x w(t, \cdot)\|_{L^2}^p
\le
\mathbb{E}\|\d_x w(t_0, \cdot)\|_{L^2}^p \,
e^{C_2 (t-t_0)}.
\end{equation}

\item
\begin{equation}
\label{3.7.1}
\mathbb{E}\left|\int_{L} w(t,x)\,dx\right|^p
\le
\mathbb{E}\left|\int_{L} w(t_0,x)\,dx\right|^p
e^{C_1 (t-t_0)}.
\end{equation}
\end{enumerate}
Here $C_1>0$ is a constant independent of $w$.
\end{lemma}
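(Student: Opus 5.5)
The whole argument rests on the explicit formula \eqref{3.5}: since $w(t,x)=w_0(x)\,Z(t)$ with
\[
Z(t):=\exp\Big\{\int_{t_0}^t \big(\gamma(s)-\tfrac12\alpha^2(s)\big)\,ds+\int_{t_0}^t\alpha(s)\,d\beta(s)\Big\},
\]
every norm of $w(t,\cdot)$ factors as the corresponding norm of $w_0$ times $Z(t)$. Thus
\[
\|w(s,\cdot)\|_{1,2}=Z(s)\|w_0\|_{1,2},\qquad \|\partial_x w(t,\cdot)\|_{L^2}=Z(t)\|\partial_x w_0\|_{L^2},\qquad \int_L w(t,x)\,dx=Z(t)\int_L w_0(x)\,dx .
\]
Assertion 2 follows at once, because $Z>0$.

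For the remaining items we use that $w_0$ is $\mathcal F_0$-measurable, hence $\mathcal{F}_{t_0}$-measurable, while the increments of $\beta$ after $t_0$ are independent of $\mathcal F_{t_0}$. The expectations therefore split, and it suffices to show
\[
\mathbb{E} Z(t)^p\le e^{C_2(t-t_0)},\qquad \mathbb{E}\sup_{s\in[t_0,t]}Z(s)^p\le e^{C_1(t-t_0)} .
\]
The integral $\int_{t_0}^t\alpha\,d\beta$ is Gaussian with variance $\int_{t_0}^t\alpha^2\,ds$, so
\[
\mathbb{E} Z(t)^p=\exp\Big\{p\int_{t_0}^t\gamma\,ds+\frac{p(p-1)}2\int_{t_0}^t\alpha^2\,ds\Big\}.
\]
Since $\gamma$ and $\alpha$ are continuous on $[0,T]$, this is bounded by $e^{C_2(t-t_0)}$ with $C_2=p\|\gamma\|_\infty+\frac{p(p-1)}2\|\alpha\|_\infty^2$. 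This gives \eqref{3.7}. Item 4, \eqref{3.7.1}, follows in exactly the same way, since $C_2\le C_1$ for the $C_1$ chosen below.

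The supremum in \eqref{3.6} is the only genuine step. Write $Z(s)^p=e^{p\int_{t_0}^s(\gamma-\frac12\alpha^2)\,dr}M(s)^p$ with $M(s)=\exp\{\int_{t_0}^s\alpha\,d\beta\}$. The deterministic factor is bounded by $e^{p\|\gamma\|_\infty(t-t_0)}$. The process $M$ is a positive submartingale, being a convex function of a martingale. Doob's $L^p$ inequality then gives
\[
\mathbb{E}\sup_{s\in[t_0,t]}M(s)^p\le\Big(\frac p{p-1}\Big)^p\mathbb{E} M(t)^p=\Big(\frac p{p-1}\Big)^p\exp\Big\{\frac{p^2}2\int_{t_0}^t\alpha^2\,ds\Big\}.
\]
This leaves the constant $(p/(p-1))^p\le 4$ in front of the exponential, and the form \eqref{3.6} has no such prefactor. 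There are two ways to deal with it. One is to state the bound as $C e^{C_1(t-t_0)}$, which is harmless for the later Trotter–Kato iteration since the estimate is applied on finitely many subintervals and only the product of the exponentials matters. If the clean form is really required, the alternative is to apply Itô's formula to $Z^p$ and use the Burkholder–Davis–Gundy inequality on the martingale part $p\int Z^p\alpha\,d\beta$, followed by a Gronwall argument. This still yields a multiplicative constant, which can be absorbed for small $t-t_0$. I expect this bookkeeping of the constant to be the only delicate point. Combining the factorization, the independence of $w_0$ from the increments of $\beta$, and these bounds with $C_1=p\|\gamma\|_\infty+\frac{p^2}{2}\|\alpha\|_\infty^2$ (up to the prefactor just discussed) yields \eqref{3.6}.
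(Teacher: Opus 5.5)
Your items 2--4 follow the paper's proof. You factor $w=w_0Z$, use that $w_0$ is $\mathcal{F}_{t_0}$-measurable while $Z$ depends only on increments of $\beta$ after $t_0$, and compute the lognormal moment exactly. Your formula $\mathbb{E}Z(t)^p=\exp\{p\int_{t_0}^t\gamma\,ds+\frac{p(p-1)}{2}\int_{t_0}^t\alpha^2\,ds\}$ is what \eqref{ast} should read.

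For the maximal bound \eqref{3.6} your route differs. The paper writes $\eta$ in integral form, applies Burkholder--Davis--Gundy and closes with Gronwall. You instead peel off the deterministic drift and apply Doob's $L^p$ inequality to the exponential submartingale $M$. Doob is shorter and gives explicit constants. BDG plus Gronwall does not use the closed-form solution, so it would extend to equations without one, but with worse constants. Both routes produce a prefactor. The paper's intermediate claim $\mathbb{E}\sup|\eta|^p\le 1+C\int_{t_0}^t\mathbb{E}\sup|\eta|^p\,ds$ does not follow from BDG, because raising a sum of three terms to the power $p$ puts a factor like $3^{p-1}$ in front of the $1$.

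Keeping the prefactor is the right call. However, your suggestion that it can be absorbed for small $t-t_0$ is backwards, and the clean form \eqref{3.6} is in fact false. Suppose $\alpha(t_0)\neq0$. Then $\sup_{s\in[t_0,t]}\int_{t_0}^s\alpha\,d\beta$ has the law of $|W_A|$ with $A=\int_{t_0}^t\alpha^2\,ds\ge c(t-t_0)$ for small $t-t_0$. Hence, for some constant $K$,
$\mathbb{E}\sup_{s\in[t_0,t]}Z(s)^p\ge e^{-K(t-t_0)}\bigl(1+p\sqrt{2A/\pi}\bigr)\ge 1+c'\sqrt{t-t_0}-O(t-t_0)$.
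This exceeds $e^{C_1(t-t_0)}=1+O(t-t_0)$ for small $t-t_0$; taking $w_0\equiv1$ gives a counterexample. A constant can be absorbed into the exponent only when $t-t_0$ is bounded below, which is not the regime $\delta=T/(N+1)\to0$ of the splitting.

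Likewise, the prefactor is harmless not because there are finitely many subintervals. Their number tends to infinity, and a prefactor compounded over them would give $C^{N}$. It is harmless because the iteration across subintervals in Proposition~\ref{prop:3.5} runs only through the exact, prefactor-free bounds \eqref{3.7} and \eqref{3.7.1}. The maximal bound is invoked only once per subinterval, as in \eqref{3.16}, which indeed already carries a prefactor $C_1$.
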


\medskip

\begin{proof}
We use the explicit formula~\eqref{3.5}: 
\begin{equation}
\label{3.8}
w(t,x) = w(t_0,x)\,\eta(t,t_0),
\end{equation}
where
\begin{equation*}
\label{eq:eta-def}
\eta(t,t_0)
=
\exp\left\{
\int_{t_0}^t \left( \gamma(s) - \tfrac12 \alpha^2(s) \right) ds
+ \int_{t_0}^t \alpha(s) \, d\beta(s)\right\}.
\end{equation*}
Since $w(t_0,\cdot)$ is $\mathcal{F}_{t_0}$--measurable and
$\eta(t,t_0)$ is independent of $\mathcal{F}_{t_0}$, we obtain
\begin{equation*}
\label{3.9}
\mathbb{E} \sup_{s \in [t_0,t]} \|w(s, \cdot)\|_{1,2}^p
=
\mathbb{E}\|w(t_0, \cdot)\|_{1,2}^p
\,\mathbb{E} \sup_{s \in [t_0,t]}|\eta(s,t_0)|^p .
\end{equation*}
Furthermore, $\eta(t)$ in its integral form, satisfies
\[
\eta(t, t_0)  = 1 + \int_{t_0}^t \gamma(s) \eta(s) \, ds + \int_{t_0}^t \alpha(s)  \eta(s) d \beta(s).
\]
Therefore, using Burkhover-Davis-Gundy standard estimates for stochastic exponentials, the continuity of
$\gamma$ and $\alpha$, and the martingale properties of the stochastic
integral, we obtain
\[
\mathbb{E} \sup_{s \in [t_0,t]}|\eta(s,t_0)|^p \leq 1 + C \int_{t_0}^t \mathbb{E} \sup_{\tau \in [t_0,s]}|\eta(\tau,t_0)|^p \, ds,
\]
where the constant $C>0$ is independent of $t_0$ and $t$, but depends on $p$ and $T$. The conclusion \eqref{3.6} now follows from Gronwall's inequality. Furthermore, using \eqref{ast} and continuity of $\gamma$ and $\alpha$, we have
\begin{equation}\label{3.10}
  \mathbb{E}\|\d_x w(t, \cdot)\|_{L^2}^p =\mathbb{E}\|\d_x w(t_0, \cdot)\|_{L^2}^p \mathbb{E} |\eta(t,t_0)|^p 
\le
\mathbb{E}\|\d_x w(t_0, \cdot)\|_{L^2}^p \,
e^{C_2 (t-t_0)},
\end{equation}
which implies \eqref{3.7}. The proof of \eqref{3.7.1} is analogous.

\end{proof}
 We now proceed with Trotter-Kato splitting scheme. To this end, for fixed $N \geq 1$ we equi-partition the interval $[0,T]$ into the intervals of length $\delta = \frac{T}{N+1}$. Then for $t \in [(j-1) \delta, j \delta]$ and arbitrary $\varphi \in C^\infty(\mathbb{T}_L)$ define:

\medskip

\paragraph{Deterministic dynamics (D).}
We look for the function $v_N$ which, for
$t \in [(j-1)\delta,j\delta]$, satisfies
\begin{equation*}
\label{3:10}
\langle v_N(t,\cdot),\varphi\rangle_{L^2}
-
\langle v_N((j-1)\delta,\cdot),\varphi\rangle_{L^2}
=
\int_{(j-1)\delta}^{t}
\int_{\{v_N(s,\cdot)>0\}}
v_N^n (\d_x^3 v_N^n) (\d_x \f) \,dx\,ds,
\end{equation*}
for $j=1,\dots,N$.

\medskip

\paragraph{Stochastic dynamics (S).}
We look for the function $w_N$ satisfying
\begin{equation}
\label{3.11}
w_N(t,\cdot) - w_N((j-1)\delta,\cdot)
=
\int_{(j-1)\delta}^{t} \gamma(s) w_N(s,\cdot)\,ds
+
\int_{(j-1)\delta}^{t} \alpha(s) w_N(s,\cdot)\,d\beta(s)
\end{equation}
for $j=1,\dots,N$.
\medskip

\paragraph{Deterministic--stochastic connection (DS).}
We set
\begin{equation}
\label{3.12}
v_N(0):=u_0,
\qquad
v_N(j\delta, \cdot):=\lim_{t\to j\delta-} w_N(t, \cdot),
 \text{  and  }
w_N((j-1)\delta, \cdot):=\lim_{t\to j\delta} v_N(t, \cdot)
\quad \text{a.s.}
\end{equation}
As we showed above, the equations \eqref{3.10} and \eqref{3.11} are well posed. Following the same approach as in \cite{Gess1}, we define the concatenated approximate solution
\[
u_N : [0,T]\times\mathbb{T}_L \times\Omega \to [0,\infty)
\]
via
\begin{equation*}
\label{eq:approx-solution}
u_N(t,\cdot)
=
\begin{cases}
v_N(2t-(j-1)\delta,\cdot),
& t \in [(j-1)\delta,(j-\tfrac12)\delta], \\[0.5em]
w_N(2t-j\delta,\cdot),
& t \in [(j-\tfrac12)\delta,j\delta],
\end{cases}
\qquad j=1,\dots,N+1.
\end{equation*}
This way, $u_N(t,\cdot)$ is well defined and satisfies
$u_N(t,\cdot)\ge0$ for every $t\in[0,T]$.

\medskip

\begin{proposition}
\label{prop:3.5}
For any $p\in[2,\infty)$ there exists a constant $C>0$ such that,
for any $N\ge1$, we have
\[
w_N, v_N, u_N \in L^p\!\left(
\Omega,\mathcal{F},\mathbb{P};
L^\infty(0,T;H^1(\mathbb{T}_L))
\right),
\]
satisfying

\begin{align}
\label{3.13}
& \mathbb{E} \sup_{t \in [0,T]} \|u_N(t,\cdot)\|_{1,2}^p
 + \mathbb{E} \sup_{t \in [0,T]} \|w_N(t)\|_{1,2}^p
 + \mathbb{E} \sup_{t \in [0,T]} \|v_N(t)\|_{1,2}^p \nonumber \\
& \quad
 + \mathbb{E} \int_0^T \|\partial_x v_N(t)\|_{1,2}^{p-2} \int_{\{v_N(t, \cdot)>0\}} v_N^n (\d_x^3 v_N)^2 \, dx \, dt 
 \le C \, \mathbb{E} \|u_0\|_{1,2}^p .
\end{align}
\end{proposition}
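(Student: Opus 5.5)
We prove the uniform bound \eqref{3.13} by tracking $\|u_N\|_{1,2}^p$ through the splitting steps. Each deterministic step does not increase the $H^1$-norm. Each stochastic step multiplies the $p$-th moment by at most $e^{C\delta}$, and $N\delta \le T$.

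\textbf{Step 1: one deterministic step.} On $[(j-1)\delta, j\delta]$ we bound the two components of the $H^1$-norm separately.
\begin{itemize}
\item By Corollary~\ref{cor:estimate}, applied with initial datum $v_N((j-1)\delta)$, we get pathwise for $t$ in this interval
\[
\|\partial_x v_N(t)\|_{L^2}^p + 2\int_{(j-1)\delta}^t \|\partial_x v_N(s)\|_{L^2}^{p-2}\int_{\{v_N>0\}} v_N^n|\partial_x^3 v_N|^2\,dx\,ds \le \|\partial_x v_N((j-1)\delta)\|_{L^2}^p .
\]
\item The $L^2$ part is controlled by mass conservation (Theorem~\ref{Th:3.2}) and the Poincar\'e--Wirtinger inequality on $\mathbb{T}_L$:
\[
\|v\|_2 \le L^{-1/2}\Big|\int_L v\,dx\Big| + C_P\|\partial_x v\|_2 .
\]
\end{itemize}
Hence the natural quantity to propagate is the equivalent norm
\[
\Phi(v) := \Big|\int_L v\,dx\Big| + \|\partial_x v\|_{L^2},
\]
whose $p$-th power is comparable to $\|v\|_{1,2}^p$. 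The deterministic step does not increase either summand, for every $t$ in the interval and not only at the endpoint. Measurability in $\omega$ of the deterministic flow is needed, since the data are random. For this we would use the construction as a limit of the unique smooth $v_\varepsilon$, or a measurable selection as in \cite{Gess1}.

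\textbf{Step 2: one stochastic step.} On the stochastic half-steps, $w_N$ has initial datum $\mathcal{F}_{(j-1)\delta}$-measurable and equal to $v_N(j\delta)$. Lemma~\ref{lem:3.4}, parts \eqref{3.7} and \eqref{3.7.1}, gives
\[
\mathbb{E}\,\|\partial_x w_N(j\delta)\|_{L^2}^p \le e^{C\delta}\,\mathbb{E}\,\|\partial_x v_N(j\delta)\|_{L^2}^p,
\]
and the analogous bound for the mass. Combining with Step 1 and iterating over $j\le N+1$ yields
\[
\mathbb{E}\,\Phi(u_N(j\delta))^p \le e^{C(N+1)\delta}\,\Phi(u_0)^p = e^{CT}\,\Phi(u_0)^p .
\]
One caveat: $\Phi^p$ mixes the two terms. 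It is cleanest to propagate $\mathbb{E}\|\partial_x\cdot\|^p$ and $\mathbb{E}|\int\cdot|^p$ separately, since each factorizes through the same scalar $\eta(j\delta,(j-1)\delta)$. Indeed $w_N = w_N((j-1)\delta)\,\eta$ exactly, so independence gives the factor $\mathbb{E}\eta^p \le e^{C\delta}$ from \eqref{ast}.

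\textbf{Step 3: the supremum in time, which is the main obstacle.} The bounds above hold at grid points, but \eqref{3.13} requires $\mathbb{E}\sup_t$. Here we use the multiplicative structure. For $t$ in the $j$-th stochastic interval,
\[
\|w_N(t)\|_{1,2} = \|w_N((j-1)\delta)\|_{1,2}\,\eta(t,(j-1)\delta),
\]
and in deterministic intervals the norm is dominated by its value at the left endpoint. Unrolling the recursion, $\Phi(u_N(t))$ is bounded by $C\Phi(u_0)$ times a product of the increments of $\eta$ up to time $t$. Since $\Phi$ is not a norm invariant under the deterministic flow but is merely nonincreasing, this product is not exactly $\eta(t)$. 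It is nevertheless bounded by the running maximum of a product of independent factors, and we proceed as follows.
\begin{itemize}
\item Define the piecewise process $M_N(t)$ as the product of the completed increments $\eta(k\delta,(k-1)\delta)$ for $k<j$, times $\eta(t,(j-1)\delta)$ on the current interval.
\item Pathwise, each factor comes from continuous-time increments of the same Brownian motion, so the product telescopes to $\eta(t)$ up to a time reparametrization. Hence $M_N$ is a positive martingale times $\exp\big(\int\gamma\big)$.
\item Doob's $L^p$ inequality, or the BDG/Gronwall argument of Lemma~\ref{lem:3.4}, then gives $\mathbb{E}\sup_t M_N^p \le C(p,T)$ uniformly in $N$.
\end{itemize}
This yields the sup bounds for $u_N$, $v_N$ and $w_N$ simultaneously. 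The dissipation integral is obtained by summing the Step 1 inequality over $j$, multiplying by the (independent) future factors, and taking expectations, using $\|\partial_x v_N\|_{1,2}\le\|v_N\|_{1,2}$. Finally, $u_N$ is a time reparametrization of $v_N$ and $w_N$, so its bound follows directly.
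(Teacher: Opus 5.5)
Your proof is correct. It departs from the paper's at exactly the step you flag as the main obstacle.

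\textbf{The supremum in time.} The paper propagates grid-point moments as in your Steps 1--2. It then proves per-subinterval bounds \eqref{3.15}--\eqref{3.16} on $\mathbb{E}\sup_{t\in[(j-1)\delta,j\delta]}\|v_N(t)\|_{1,2}^p$ and on the analogous quantity for $w_N$, with constants independent of $j$. These come from Lemma~\ref{lem:3.4}(1), i.e.\ BDG plus Gronwall, on stochastic steps, and from energy monotonicity plus mass conservation on deterministic steps. The paper then simply asserts the bound on $\mathbb{E}\sup_{t\in[0,T]}$. Per-interval bounds alone only give
\[
\mathbb{E}\max_j\sup_{I_j}\le\sum_j\mathbb{E}\sup_{I_j}\le (N+1)C_3\|u_0\|_{1,2}^p,
\]
which is not uniform in $N$, so that last step needs a global-in-time argument the paper does not supply.

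Your pathwise domination supplies it. Each stochastic step multiplies the profile by the positive scalar $\eta(\cdot,(j-1)\delta)$, $\Phi$ is nonincreasing along deterministic steps, and $\eta(t,0)=\eta(t,r)\,\eta(r,0)$. Together these give $\sup_{t\in[0,T]}\Phi(u_N(t))\le\Phi(u_0)\sup_{s\in[0,T]}\eta(s,0)$. Doob's inequality for the exponential martingale $e^{-\int_0^s\gamma}\eta(s,0)$ then bounds the right-hand side in $L^p$ independently of $N$.

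What your route buys is a short, $N$-free supremum bound. What it costs is generality: it relies on the noise being linear and spatially constant. The paper's moment-based scheme does not use such a product formula, but as written it is incomplete at this step.

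\textbf{The dissipation term.} Here the two routes are essentially equivalent. The paper telescopes \eqref{3.4*} in expectation and collects errors $(e^{C_2\delta}-1)\mathbb{E}\|\partial_x v_N(j\delta-)\|_2^p$ that sum to $O(1)$. You weight the $j$-th inequality by $\Pi_j=\prod_{k\ge j}\eta(k\delta,(k-1)\delta)^p$, telescope exactly and pathwise, and then factor expectations by independence. To strip the weights afterwards you should state the uniform lower bound $\mathbb{E}\Pi_j\ge\exp\bigl(-p\int_0^T|\gamma(s)|\,ds\bigr)$.

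\textbf{Two slips.}
\begin{enumerate}
\item The product of increments equals $\eta(s,0)$ exactly in $w_N$-time. Only the domination of $\Phi(u_N)$ by it is an inequality, so your sentence that ``this product is not exactly $\eta(t)$'' is confused.
\item The inequality $\|\partial_x v_N\|_{1,2}\le\|v_N\|_{1,2}$ is false for the $H^1$ norm. The weight that \eqref{3.4*} actually controls, and the only sensible reading of the statement, is $\|\partial_x v_N\|_{L^2}^{p-2}$.
\end{enumerate}
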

\medskip

\begin{proof}  By construction, $u_N$ is continuous in $t$.
Next, we proceed with the estimates of $\partial_x v_N(j \delta)$ and
$\partial_x w_N(j \delta)$.
Using \eqref{3.12}, \eqref{3.7}, and \eqref{3.4}, we have
\begin{align*}
\mathbb{E} \|\partial_x v_N(\delta, \cdot)\|_{L^2}^p
&= \mathbb{E} \left( \lim_{t \to \delta^-}
      \|\partial_x w_N(t, \cdot)\|_{L^2} \right)^p
\le e^{C_2 \delta} \mathbb{E} \|\partial_x w_N(0, \cdot)\|_{L^2}^p \nonumber \\
&= e^{C_2 \delta} \mathbb{E}
   \left( \lim_{t \to \delta^-}
   \|\partial_x v_N(t, \cdot)\|_{L^2} \right)^p
\le e^{C_2 \delta} \mathbb{E} \|\partial_x v_0\|_{L^2}^p,
\end{align*}
where we used the dominated convergence theorem, \eqref{3.4*} and \eqref{3.6}. Next,
\begin{equation*}
\mathbb{E} \|\partial_x w_N(\delta, \cdot)\|_{L^2}^p
= \mathbb{E} \left( \lim_{t \to 2 \delta^-}
\|\partial_x v_N(t, \cdot)\|_{L^2} \right)^p \le \mathbb{E} \left(
\|\partial_x v_N(\delta, \cdot)\|_{L^2} \right)^p
\le e^{C_2 \delta} \mathbb{E} \|\partial_x v_0\|_{L^2}^p .
\end{equation*}
Similarly,
\begin{equation*}
\mathbb{E} \|\partial_x v_N(2\delta, \cdot)\|_{L^2}^p
 = \mathbb{E} \left( \lim_{t \to 2\delta^-}
\|\partial_x w_N(t, \cdot)\|_{L^2} \right)^p \le e^{C_2\delta}\|\partial_x w_N(\delta, \cdot)\|_{L^2}^p
\le e^{2C\delta} \mathbb{E} \|\partial_x u_0\|_{L^2}^p
\end{equation*}
and
\begin{equation*}
\mathbb{E} \|\partial_x w_N(2\delta, \cdot)\|_{L^2}^p
= \mathbb{E} \left( \lim_{t \to 3\delta^-}
\|\partial_x v_N(t, \cdot)\|_{L^2} \right)^p \le \mathbb{E} 
\|\partial_x v_N(2 \delta, \cdot)\|_{L^2}^p
\le e^{2C_2\delta} \mathbb{E} \|\partial_x u_0\|_{L^2}^p .
\end{equation*}
Therefore, for any $j \in \mathbb{N}$, we get
\begin{equation} \label{3.14}
\|\partial_x u_N(j\delta \cdot)\|_{L^2}^p
\le e^{j C_2 \delta} \mathbb{E} \|\partial_x u_0\|_{L^2}^p,
 {\text{ and }}
\|\partial_x w_N(j\delta, \cdot)\|_{L^2}^p
\le e^{j C_2 \delta} \mathbb{E} \|\partial_x u_0\|_{L^2}^p .
\end{equation}
Next, for any $j = 1,\dots,N$, using Poincar\'e inequality, we obtain
\begin{align}\label{3.15}
\nonumber & \mathbb{E} \sup_{t \in [(j-1)\delta,\, j\delta]}
\|v_N(t)\|_{1,2}^p
\le
\mathbb{E} \| v_N((j-1)\delta+, \cdot) \|_{1,2}^p \\
\nonumber &\le
C(p)\Big(
\mathbb{E}\|\partial_x v_N((j-1)\delta,\cdot)\|_{L^2}^p
+ \mathbb{E}\Big| \int_{L} v_N((j-1)\delta+,x)\,dx \Big|^p
\Big) \\
\nonumber &\le
C(p)\Big(
e^{(j-1)\delta C_2}\|\partial_x u_0\|_{L^2}^p
+ \mathbb{E}\Big| \int_{L} w_N((j-1)\delta-,x)\,dx \Big|^p
\Big) \\
\nonumber &\le
C(p)\Big(
e^{(j-1)\delta C_2}\|\partial_x u_0\|_{L^2}^p
+ \mathbb{E}\Big| \int_{L} w_N((j-2)\delta+,x)\,dx \Big|^p
\Big) \\
&\le
C(p)\Big(
e^{(j-1)\delta C_2}\|\partial_x u_0\|_{L^2}^p
+ \mathbb{E}\Big| \int_{L} v_N((j-1)\delta-,x)\,dx \Big|^p
\Big) \\
\nonumber &\le
C(p)\Big(
e^{(j-1)\delta C_2}\|\partial_x u_0\|_{L^2}^p
+ \mathbb{E}\Big| \int_{L} v_N((j-2)\delta+,x)\,dx \Big|^p
\Big) \\
\nonumber &\le
C(p)\Big(
e^{(j-1)\delta C_2}\|\partial_x u_0\|_{L^2}^p
+ e^{(j-1)\delta C_2} \Big( \int_{L} u_0(x)\,dx \Big)^p 
\Big) \le
C_3 \|u_0\|_{H^1}^{p},
\end{align}
where $C_3>0$ does not depend on $j$.
For $w_N(t)$, using \eqref{3.6}, \eqref{3.7}, \eqref{3.12} and  \eqref{3.14}, we have  
\begin{align}\label{3.16}
\nonumber \mathbb{E} \sup_{t \in [(j-1)\delta,\, j\delta]}
\| w_N(t, \cdot) \|_{1,2}^p
&\le
C_1 e^{C_2\delta}
\mathbb{E}\| w_N((j-1)\delta+, \cdot)\|_{1,2}^p \\
\nonumber &\le
C_p C_1 e^{C_2\delta}
\Big(
\mathbb{E}\|\partial_x w_N((j-1)\delta+)\|_{L^2}^p
+ \mathbb{E}\Big| \int_{L} w_N((j-1)\delta+,x)\,dx \Big|^p
\Big) \\
&\le
C_p C_1 e^{C_2\delta}
\Big(
e^{(j-1)\delta C_2}\|\partial_x u_0\|_{L^2}^p
+ \mathbb{E}\Big| \int_{L} v_N(j\delta-,x)\,dx \Big|^p
\Big) \\
\nonumber &\le
C_p C_1 e^{C_2\delta}
\Big(
e^{(j-1)\delta C_2}\|\partial_x u_0\|_{L^2}^p
+ \mathbb{E}\Big| \int_{L} v_N((j-1)\delta+,x)\,dx \Big|^p
\Big) \\
\nonumber &\le
C_p C_1 e^{C_2\delta}
\Big(
e^{(j-1)\delta C_2}\|\partial_x u_0\|_{L^2}^p
+ \mathbb{E}\Big| \int_{L} w_N((j-1)\delta-,x)\,dx \Big|^p
\Big) \\
\nonumber &\le
C_p C_1 e^{C_2\delta}
\Big(
e^{(j-1)\delta C_2}\|\partial_x u_0\|_{L^2}^p
+ \mathbb{E}\Big| \int_{L} w_N((j-2)\delta+,x)\,dx \Big|^p
\Big) \\
\nonumber &\le
C_p C_1 e^{C_2\delta}\Big(
e^{(j-1)\delta C_2}\|\partial_x u_0\|_{L^2}^p
+ e^{(j-1)\delta C_2} \Big( \int_{L} u_0(x)\,dx \Big)^p 
\Big) \le
C_4 \|u_0\|_{H^1}^{p}.
\end{align}
It follows from \eqref{3.15} and \eqref{3.16} that
\begin{equation*}
\label{3.16*}
\mathbb{E} \sup_{t \in [0,T]} \|u_N(t,\cdot)\|_{1,2}^p
 + \mathbb{E} \sup_{t \in [0,T]} \|w_N(t)\|_{1,2}^p
 + \mathbb{E} \sup_{t \in [0,T]} \|v_N(t)\|_{1,2}^p 
 \le C \, \mathbb{E} \|u_0\|_{1,2}^p.
\end{equation*}
It remains to estimate the integral term in \eqref{3.13}.
To simplify the presentation, denote
\[
Y(t) := 2 \|v_N(t)\|_{1,2}^{p-2}
\int_{\{v_N(t,\cdot)>0\}} v_N^n (\partial_x^3 v_N)^2\, dx .
\]
Using \eqref{3.4*}, on $[(j-1)\delta, j\delta)$, for $j = 1$ we have
\[
\int_{0}^{\delta} Y(t)\, dt
\le \|\partial_x u_0\|_{2}^{p}
- \|\partial_x v_N(\delta-,\cdot)\|_{2}^{p}.
\]
For $j = 2$, we get
\begin{align*}
\mathbb{E} \int_{\delta}^{2\delta} Y(t)\, dt
&\le \mathbb{E}\|\partial_x v_N(\delta+,\cdot)\|_{2}^{p}
- \mathbb{E}\|\partial_x v_N(2\delta-,\cdot)\|_{2}^{p} \\
&= \mathbb{E}\|\partial_x w_N(\delta-,\cdot)\|_{2}^{p}
- \mathbb{E}\|\partial_x v_N(2\delta-,\cdot)\|_{2}^{p} \\
&\le e^{C_2 \delta}
\mathbb{E}\|\partial_x v_N(\delta-,\cdot)\|_{2}^{p}
- \mathbb{E}\|\partial_x v_N(2\delta-,\cdot)\|_{2}^{p}.
\end{align*}
Therefore, we get
\[
\mathbb{E} \int_{\delta}^{2\delta} Y(t)\, dt
\le \|\partial_x u_0\|_{2}^{p}
+ \left(e^{C_2\delta}-1\right)
\mathbb{E}\|\partial_x v_N(\delta-,\cdot)\|_{2}^{p}
- \mathbb{E}\|\partial_x v_N(2\delta-,\cdot)\|_{2}^{p}.
\]
In a similar way, for $j = 3$ we have
\begin{align*}
\mathbb{E} \int_{2\delta}^{3\delta} Y(t)\, dt
&\le \mathbb{E}\|\partial_x v_N(2\delta+,\cdot)\|_{2}^{p}
- \mathbb{E}\|\partial_x v_N(3\delta-,\cdot)\|_{2}^{p} \\
&= \mathbb{E}\|\partial_x w_N(2\delta-,\cdot)\|_{2}^{p}
- \mathbb{E}\|\partial_x v_N(3\delta-,\cdot)\|_{2}^{p} \\
&\le e^{C_2\delta}
\mathbb{E}\|\partial_x v_N(2\delta-,\cdot)\|_{2}^{p}
- \mathbb{E}\|\partial_x v_N(3\delta-,\cdot)\|_{2}^{p}.
\end{align*}
Thus,
\begin{align*}
\mathbb{E} \int_{0}^{3\delta} Y(t)\, dt
&\le \|\partial_x u_0\|_{2}^{p}
+ \left(e^{C_2\delta}-1\right)
\mathbb{E}\|\partial_x v_N(\delta-,\cdot)\|_{2}^{p} \\
&\quad + \left(e^{C_2\delta}-1\right)\mathbb{E}\|\partial_x v_N(2\delta-,\cdot)\|_{2}^{p} - \mathbb{E}\|\partial_x v_N(3\delta-,\cdot)\|_{2}^{p} \\
& \le \|\partial_x u_0\|_{2}^{p} + e^{C_2 T} \left(e^{C_2\delta}-1\right) \|\partial_x u_0\|_{2}^{p} + e^{C_2 T} \left(e^{C_2\delta}-1\right) \|\partial_x u_0\|_{2}^{p} - \mathbb{E}\|\partial_x v_N(3\delta-,\cdot)\|_{2}^{p}.
\end{align*}
Continuing the process on the interval $[(j-1)\delta,j\delta)$,
we obtain \eqref{3.13}, which completes the proof of Proposition \ref{prop:3.5}. 
\end{proof}

For any Banach space $X$ and $D \subset \mathbb{R}^d$, denote $B^{s,p}_q(D,X)$
to be the Besov space, defined e.g.\ in \cite{BerLof} or \cite{Tri}.
The space $B^{s,p}_q(\T;\R)$ will be denoted with $B(\T)$.
We will now establish the analog of Proposition 4.2 in \cite{Gess1}.

\medskip

\noindent
\begin{proposition}

For any $p \geq 2$, $\sigma>0$, $k \in (2 \sigma, \tfrac{2}{p}) \cap (2 \sigma,\tfrac12]$
and $q \in \bigl(\tfrac{2}{k-2\sigma}, \infty\bigr)$,
there exists $C>0$ such that for all $N \ge 1$ we have
\begin{equation*}\label{3.18}
u_N \in L^p(\Omega,\mathcal{F},\mathbb{P};
B^{\frac{k}{2}-\sigma,q}_q([0,T]; B^{\frac12-2k,q}_q(\T))
),
\end{equation*}
and
\begin{equation}\label{3.19}
\mathbb{E}\,
\|u_N\|^p_{B^{\frac{k}{2}-\sigma,q}_q([0,T];
B^{\frac12-2k,q}_q(\T))}
\le
C\,\mathbb{E}\Bigl(
\|u_0\|_{1,2}^p + \|u_0\|_{1,2}^{(k+1)p}
\Bigr).
\end{equation}
\end{proposition}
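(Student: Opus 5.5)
We follow the argument of Proposition 4.2 in \cite{Gess1}: estimate the time-increments of $u_N$ in a negative-order spatial space, and interpolate with the uniform $H^1$ bound \eqref{3.13}. The splitting $u_N$ is a time-rescaled concatenation of deterministic pieces $v_N$ and stochastic pieces $w_N$. So for $0\le s<t\le T$ we write $u_N(t)-u_N(s)$ as a sum over the subintervals of increments of $v_N$ and increments of $w_N$. On each deterministic piece we test the weak formulation with $\varphi$ and integrate by parts once:
\[
|\langle v_N(t)-v_N(s),\varphi\rangle| \le \int_s^t \Bigl(\int v_N^{n}\,dx\Bigr)^{1/2}\|\partial_x\varphi\|_\infty\Bigl(\int_{\{v_N>0\}} v_N^n|\partial_x^3 v_N|^2dx\Bigr)^{1/2}dr .
\]
After Cauchy--Schwarz in time, this gives a bound of order $|t-s|^{1/2}$ in $W^{-1,1}$ (or $H^{-2}$), with random constant controlled by $\sup_t\|v_N\|_{1,2}^{n/2}$ and the dissipation integral. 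The dissipation is bounded in $L^{p}(\Omega)$ by \eqref{3.13} after absorbing the weight $\|\partial_x v_N\|^{p-2}$ via Young's inequality. The extra power $\|u_0\|_{1,2}^{(k+1)p}$ in \eqref{3.19} comes from the mobility factor $v_N^{n/2}$, which is controlled through the embedding $H^1\subset L^\infty$.

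On the stochastic pieces we use \eqref{3.11} directly. The drift part is Lipschitz in time with values in $H^1$, with constant $\sup|\gamma|\,\sup_t\|w_N\|_{1,2}$. For the martingale part we apply the Burkholder--Davis--Gundy inequality and \eqref{3.6}:
\[
\mathbb{E}\Bigl\|\int_s^t\alpha w_N\,d\beta\Bigr\|_{1,2}^q\le C|t-s|^{q/2}\,\mathbb{E}\sup\|w_N\|^q_{1,2}.
\]
Since the time rescaling in the definition of $u_N$ only doubles lengths, summing the pieces gives
\[
\mathbb{E}\|u_N(t)-u_N(s)\|_{H^{-2}}^{q}\lesssim |t-s|^{q/2}\,\mathbb{E}(\cdots).
\]
Here one must check that the number of pieces between $s$ and $t$ does not spoil the power. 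This works because the dissipation integrals over consecutive pieces add up, rather than being bounded separately.

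Next we interpolate between $L^\infty_tH^1_x$ and $C^{1/2}_tH^{-2}_x$ (or $W^{1/2,q}_tH^{-2}_x$). With weight $\theta\approx k/2$ we obtain spatial regularity $1-3\theta/... $ adjusted to $\tfrac12-2k$ and time regularity $\tfrac k2$. Losing $\sigma$ in time lets us pass from a pointwise-in-time Hölder bound to the Besov norm $B^{k/2-\sigma,q}_q$, via the Sobolev--Slobodeckij integral
\[
\int\!\!\int \frac{\|u(t)-u(s)\|^q}{|t-s|^{1+(k/2-\sigma)q}}\,ds\,dt .
\]
This integral is finite precisely when $q>2/(k-2\sigma)$, after a Kolmogorov/Garsia-type argument. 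The constraint $k<2/p$ ensures that the moments taken in $\omega$ are compatible with \eqref{3.13}.

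The main obstacle is the deterministic increment: showing that the $L^p(\Omega)$ moment of the time-Hölder norm is controlled by the dissipation term in \eqref{3.13}, which carries the weight $\|\partial_x v_N\|^{p-2}$ and is only integrable in time, not bounded pointwise. Our first attempt is to use Hölder in time with exponent $2$ and then Young's inequality, splitting
\[
\|v_N\|^{n/2}_\infty\Bigl(\int v_N^n|\partial_x^3v_N|^2\Bigr)^{1/2}.
\]
The spatial-Besov interpolation step we treat as standard, citing \cite{BerLof,Tri}.
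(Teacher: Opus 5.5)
There is a genuine gap, and it sits at the step you yourself flag as the main obstacle. You propose to take $q$-th moments of the increments of $u_N$ and run a Kolmogorov/Garsia argument with $q>2/(k-2\sigma)>2/k$. On the deterministic pieces the increment is bounded by $C\sup_r\|v_N(r)\|_{1,2}^{n/2}D^{1/2}|t-s|^{1/2}$, where $D:=\int_0^T\!\int_{\{v_N>0\}}v_N^n|\partial_x^3v_N|^2\,dx\,dt$. So your scheme needs $\mathbb{E}D^{r}$ with $r=q/2$, or $r=kq/2$ if you interpolate first; in both cases $r>1$.

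This does not follow from \eqref{3.13}. The weight $\|\partial_x v_N\|^{p-2}$ degenerates where $\|\partial_x v_N\|$ is small, so Young's inequality cannot trade it for a bound on a power of $D$. The only unweighted information in \eqref{3.13} is its case $p=2$, i.e.\ the first moment $\mathbb{E}D\le C\|u_0\|_{1,2}^2$. On a single deterministic piece $I_j$ the energy identity does give $D_j^{p/2}\lesssim\int_{I_j}\|\partial_x v_N\|^{p-2}\int v_N^n|\partial_x^3 v_N|^2$. However, superadditivity goes the wrong way once you sum over pieces separated by the random factors $\eta$.

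The missing idea, which the paper imports from Lemma 4.3 of \cite{Gess1}, is to stay pathwise until the end and take only $p$-th moments.

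\begin{enumerate}
\item For fixed $\omega$, pair $v_N^n\partial_x^3 v_N\in L^2$ with $\|\partial_x\varphi\|_{L^2}$. This gives $\|v_N(t)-v_N(s)\|_{H^{-1}}\le C\sup_r\|v_N(r)\|_{1,2}^{n/2}D^{1/2}|t-s|^{1/2}$; your Cauchy--Schwarz over the union of deterministic sub-intervals handles the concatenation.
\item Interpolate with $\sup_t\|v_N\|_{1,2}$ at $\theta=k$ to get a $C^{k/2}([0,T];H^{1-2k})$ bound.
\item Use $C^{k/2}\subset B^{\frac k2-\sigma,q}_q$ and $H^{1-2k}\subset B^{\frac12-2k,q}_q$.
\item Only now take $\mathbb{E}[\,\cdot\,^p]$.
\end{enumerate}

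The dissipation then appears only as $D^{kp/2}$ with $kp/2<1$, which is exactly what the hypothesis $k<2/p$ is for. H\"older's inequality reduces it to $\mathbb{E}D$, while every power of $\sup_t\|v_N\|_{1,2}$ is covered by \eqref{3.13}. On the stochastic pieces, every power of $\sup_t\|w_N\|_{1,2}$ is likewise covered, so your BDG/Kolmogorov treatment there is fine.

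Note also that the negative space must be $H^{-1}$, not $H^{-2}$ or $W^{-1,1}$. To reach time regularity $\frac k2-\sigma$ from an $H^{-2}$ bound you get at best $H^{1-3k+6\sigma}$. This lies in $B^{\frac12-2k,q}_q$ only if $\frac1q\ge k-6\sigma$, which is incompatible with $q>2/(k-2\sigma)$ once $\sigma\le k/10$. The $W^{-1,1}$ bound fails similarly for small $\sigma$ and large $q$. The indices $\frac k2$ and $\frac12-2k$ are tuned precisely to an $H^{-1}$ increment.

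The lower bound on $q$ is also not what makes the Slobodeckij integral finite, since a $C^{k/2}$ bound does that for every $q$.

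Finally, tracking $\|v_N\|_\infty^{n/2}$ yields the power $(1+\frac{kn}{2})p$, not $(k+1)p$, unless $n=2$. The paper's Lemma \ref{lem:3.7} has the same slip. It is harmless for tightness, where only uniformity in $N$ is used.
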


The proposition will be established for $v_N$ and $w_N$
separately. Let us start with the estimate for $v_N$.

\medskip

\noindent
\begin{lemma}\label{lem:3.7}
For any $p \ge 2$, $\sigma>0$ and $q \ge p$ there exists
a constant $C>0$ such that for all $N \ge 1$, $1 \le j \le N+1$,
and $k \in (0,\tfrac{2}{p})$ we have
\[
v_N \in
B^{\frac{k}{2}-\sigma,q}_q([(j-1)\delta,j\delta);
B^{\frac12-2k,q}_q(\T)),
\]
with 
% --- Reconstructed LaTeX from handwritten page ---

\[
\mathbb{E}\Bigg(
\sum_{j=1}^{N+1}
\|v_N\|_{B^{\frac{k}{2}-\sigma,q}_q([0,T];\, B^{\frac12-2k,q}_q(\mathbb{T}_L))}^p
\Bigg)^{\frac{p}{q}}
\le C\big(\|u_0\|_{L^2}^p + \|u_0\|_{L^2}^{(k+1)p}\big).
\]
\end{lemma}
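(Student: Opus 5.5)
This follows the corresponding step in \cite{Gess1} (Lemma 4.3 there). On each deterministic subinterval $I_j=[(j-1)\delta,j\delta)$, $v_N$ solves \eqref{3.1} with $\mathcal{F}$-measurable initial datum $v_N((j-1)\delta)$. So the estimate is first proved pathwise for a single deterministic solution, and then summed over $j$ and averaged.

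\textbf{Step 1: pathwise H\"older continuity in time in a negative norm.} Fix $j$ and test the weak formulation of \eqref{3.1} with $\varphi\in C^\infty(\mathbb{T}_L)$. For $s<t$ in $I_j$,
\[
|(v_N(t)-v_N(s),\varphi)_2|\le \|\partial_x\varphi\|_{L^\infty}\int_s^t\Big(\int_{\{v_N>0\}} v_N^{n}\,dx\Big)^{1/2}\Big(\int_{\{v_N>0\}} v_N^{n}|\partial_x^3 v_N|^2\,dx\Big)^{1/2}dr .
\]
Mass conservation and the $H^1$ bound of Corollary \ref{cor:estimate} control $\sup_t\|v_N\|_{L^\infty}$ by $\|v_N((j-1)\delta)\|_{1,2}$. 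Cauchy--Schwarz in $r$ then gives
\[
\|v_N(t)-v_N(s)\|_{H^{-1}}\le C|t-s|^{1/2}A_j^{1/2}, \qquad A_j:=\|v_N((j-1)\delta)\|_{1,2}^{n/2}\int_{I_j}\int_{\{v_N>0\}} v_N^n|\partial_x^3v_N|^2 .
\]
Since $\sup_t\|v_N(t)\|_{H^1}$ is also bounded, interpolation between $H^{-1}$ and $H^1$ with parameter $k$ yields
\[
\|v_N(t)-v_N(s)\|_{H^{1-2k}}\le C|t-s|^{k/2}\,\sup\|v_N\|_{H^1}^{1-k}A_j^{k/2}.
\]
The Sobolev embedding $H^{1-2k}\hookrightarrow B^{\frac12-2k,q}_q(\mathbb{T}_L)$ (one dimension, losing $\tfrac12-\tfrac1q$ derivatives) then bounds the $C^{k/2}(I_j;B^{\frac12-2k,q}_q)$ norm.

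\textbf{Step 2: Besov norm on $I_j$.} Use the embedding $C^{k/2}(I_j;X)\hookrightarrow B^{\frac k2-\sigma,q}_q(I_j;X)$ with the interval-length factor $|I_j|^{1/q}$. The $q$-th power of the Besov norm on $I_j$ is then at most $C\delta\,(\sup\|v_N\|_{H^1}^{q}+\sup\|v_N\|_{H^1}^{(1-k)q}A_j^{kq/2})$. The $\sigma$-loss is what makes the constant independent of $\delta$: the Gagliardo double integral $\iint |t-s|^{kq/2-(k/2-\sigma)q-1}$ converges.

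\textbf{Step 3: summation and expectation.} Summing over $j$, the first part gives $\sum_j\delta\sup\|v_N\|^q\le T\sup_{[0,T]}\|v_N\|_{H^1}^q$. For the second part, $kq/2\ge1$ may occur, so instead use $\sum_j\delta A_j^{kq/2}\le \delta(\sum_j A_j)^{kq/2}$ if $kq/2\ge1$, or H\"older otherwise. Raising to the power $p/q$ and taking expectation, the restriction $k<2/p$ ensures $A^{kp/2}$ has exponent below $1$ in the dissipation integral. Proposition \ref{prop:3.5} (the integral term in \eqref{3.13}) together with H\"older then bounds $\mathbb{E}\,(\sum_j A_j)^{kp/2}$ by $C\,\mathbb{E}\|u_0\|_{1,2}^{(k+1)p}$, and the sup term by $C\,\mathbb{E}\|u_0\|_{1,2}^p$.

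\textbf{Main obstacle.} The delicate step is Step 3. The factor $\|v_N\|^{n/2}$ multiplying the dissipation has to be merged with the weighted dissipation bound of \eqref{3.13} (weight $\|\partial_xv_N\|^{p-2}$) so that the final power is exactly $(k+1)p$. I would first try Young's inequality to split the weight, choosing $p$ large in \eqref{3.13} to absorb the $n$-dependent powers. If the exponent does not come out exactly as $(k+1)p$, I would accept a constant depending on $n$ and an upper power bounded by $C(1+\|u_0\|^{(k+1)p})$.
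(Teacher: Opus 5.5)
Your Steps 1--3 follow the paper's route. The paper proves the $C^{1/2}([(j-1)\delta,j\delta);H^{-1})$ bound from the weak formulation. It then defers three things to Lemma~4.3 of \cite{Gess1}: interpolation against $L^\infty(H^1)$, the embeddings into $B^{\frac k2-\sigma,q}_q(\,\cdot\,;B^{\frac12-2k,q}_q)$, and the summation over $j$. That is what you reconstruct. You read the sum as $\sum_j\|v_N\|^q$ and put $\|u_0\|_{1,2}$ on the right, as in the Besov proposition preceding the lemma; that reading is correct, since no $L^2$ norm of $u_0$ controls $\sup_t\|v_N\|_{H^1}$.

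Step~1 needs three repairs:
\begin{enumerate}
\item With $\|\partial_x\varphi\|_{L^\infty}$ and $(\int v_N^n)^{1/2}$ you bound a $(W^{1,\infty})^*$ norm, not the $H^{-1}$ norm. Use $\|\partial_x\varphi\|_{L^2}$ together with $\sup_x v_N^{n/2}\le C\|v_N\|_{1,2}^{n/2}$.
\item After squaring, this puts $\|v_N\|_{1,2}^{n}$, not $\|v_N\|_{1,2}^{n/2}$, into $A_j$.
\item Keep that weight inside the time integral,
\[
\|v_N(t)-v_N(s)\|_{H^{-1}}^2\le C|t-s|\int_s^t\|v_N(r)\|_{1,2}^{n}\int_{\{v_N>0\}}v_N^n(\partial_x^3v_N)^2\,dx\,dr ,
\]
rather than freezing it at $(j-1)\delta$. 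The estimate \eqref{3.13} pairs weight and dissipation at the same time. Since the energy decreases along the deterministic flow, the left-endpoint weight dominates the weight at time $r$, not conversely, so \eqref{3.13} does not apply to your $A_j$ as written.
\end{enumerate}

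The obstacle you flag at the end is genuine, and no Young-type splitting removes it. With the weight placed as above, \eqref{3.13} with exponent $n+2$ gives
\[
\mathbb{E}\int_0^T\|v_N\|_{1,2}^{n}\int_{\{v_N>0\}}v_N^n(\partial_x^3v_N)^2\,dx\,dr\le C\|u_0\|_{1,2}^{n+2}.
\]
Your H\"older step, which uses $kp/2<1$, then yields
\[
C\bigl(\|u_0\|_{1,2}^{p}+\|u_0\|_{1,2}^{(1+\frac{nk}{2})p}\bigr).
\]
This coincides with the stated bound only when $n=2$.

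The paper obtains $(k+1)p$ only because its $H^{-1}$ estimate carries $\|v_N\|_{1,2}^2$ where $\|v_N\|_{1,2}^n$ belongs. In effect it transcribes the quadratic-mobility computation of \cite{Gess1}.

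For $n>2$ the exponent $(k+1)p$ in fact fails:
\begin{itemize}
\item On the first step $[0,\delta)$, $v_N$ is the deterministic solution from $u_0$.
\item Take $u_0=\lambda\phi$, with $\phi$ a smooth, positive, non-constant perturbation of a constant. The solution is then $\lambda v^{(1)}(\lambda^n t)$, where $v^{(1)}$ solves the equation with data $\phi$.
\item Rescaling the Gagliardo seminorm on $[0,\delta)$ bounds the left-hand side below by a multiple of $\lambda^{p(1+n(\frac k2-\sigma-\frac1q))}$.
\item This exceeds $\lambda^{(k+1)p}$ as $\lambda\to\infty$ once $\sigma$ and $1/q$ are small enough that $n(\frac k2-\sigma-\frac1q)>k$.
\end{itemize}

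So drop the fallback \emph{bounded by $C(1+\|u_0\|^{(k+1)p})$}, which does not follow from your argument. State the displayed bound with exponent $(1+\frac{nk}{2})p$ instead. The tightness argument behind Proposition~\ref{prop:3.7} only uses uniformity in $N$, so that bound is all you need.
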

\begin{proof}
For any $(j-1)\delta \le t_1 \le t_2 < j\delta$ and
$\varphi \in C^\infty(\mathbb{T}_L)$ we have
\[
\bigl(v_N(t_2)-v_N(t_1),\varphi\bigr)_{L^2}
=
\int_{t_1}^{t_2}
\int_{\{v_N>0\}}
v_N^n \,\partial_x^3 v_N \,\partial_x \varphi \, dx\, dt,
\qquad \text{P-a.s.}
\]
This P-almost surely yields
\[
\|v_N(t_2)-v_N(t_1)\|_{H^{-1}(\mathbb{T}_L)}^2
\le
\left(
\int_{t_1}^{t_2}
\Big(
\int_{\{v_N>0\}}
v_N^n (\partial_x^3 u_N)^2 \, dx
\Big)^{1/2}
dt
\right)^2
\]
\[
\le
\left(\int_{t_1}^{t_2}
\sup_{x\in[0,L]} |v_N(x,t)|^{n/2}
\left(
\int_{\{v_N>0\}}
v_N^n (\partial_x^3 v_N)^2 \, dx
\right)^{1/2} dt \right)^{2}
\]
\[
\le
C (t_2-t_1)
\int_{t_1}^{t_2}
\|v_N\|_{1,2}^2
\int_{\{v_N>0\}}
v_N^n (\partial_x^3 v_N)^2 \, dx \, dt,
\]
where we used the Sobolev embedding and the Cauchy--Schwarz inequality.
Thus, using \eqref{3.13}, we have
\[
\|v_N\|_{L^2(\Omega,\mathcal{F},\mathbb{P};
C^{1/2}([(j-1)\delta,j\delta); H^{-1}(\mathbb{T}_L)))}^2
\le
C \|u_0\|_{L^2}^2,
\]
where $C>0$ depends only on $L$.
The rest of the proof of Lemma \ref{lem:3.7} is analogous to the proof
of Lemma~4.3 in \cite{Gess1}.

\end{proof}

We now proceed with the corresponding result for $w_N$.

\begin{lemma}\label{lem:3.8}
    For any $p\in[2,\infty)$, $\ve>0$, and
$q\in[p,\infty)$ there exists $C>0$ such that for all
$N\in\mathbb{N}$, $j\in\{1,\dots,N+1\}$, and $\gamma \in(0,1)$ we have
\[
w_N \in
L^p\!\left(
\Omega,\mathcal{F},\mathbb{P};
B^{\frac{\gamma}{2}-\ve,q}_q([(j-1)\delta,j\delta);
B^{\frac12-2\gamma,q}_q(\mathbb{T}_L))
\right),
\]
with
\[
\mathbb{E}\Bigg(
\sum_{j=1}^{N+1}
\|w_N\|_{B^{\frac12-\ve,q}_q([(j-1)\delta,j\delta);
B^{\frac12-2\gamma,q}_q(\mathbb{T}_L))}^p
\Bigg)
\le
C \|u_0\|_{1,2}^p .
\]
\end{lemma}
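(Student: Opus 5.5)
On each interval $[(j-1)\delta,j\delta)$ I will use the explicit product structure from \eqref{3.5} and \eqref{3.8}:
\[
w_N(t,x)=w_N((j-1)\delta,x)\,\eta(t,(j-1)\delta).
\]
Here the spatial factor $w_N((j-1)\delta,\cdot)$ is $\mathcal F_{(j-1)\delta}$-measurable and lies in $H^1(\mathbb{T}_L)$, while the temporal factor $\eta(\cdot,(j-1)\delta)$ is a scalar stochastic exponential independent of $\mathcal F_{(j-1)\delta}$. The Besov norm in time with values in $B^{\frac12-2\gamma,q}_q(\mathbb{T}_L)$ then factorizes as
\[
\|w_N\|_{B^{s,q}_q([(j-1)\delta,j\delta);B)}\le \|w_N((j-1)\delta)\|_{B}\,\|\eta(\cdot,(j-1)\delta)\|_{B^{s,q}_q([(j-1)\delta,j\delta))},
\]
with $B=B^{\frac12-2\gamma,q}_q(\mathbb{T}_L)$. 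For the spatial factor I will use the one-dimensional embedding $H^1(\mathbb{T}_L)\hookrightarrow B^{\frac12-2\gamma,q}_q(\mathbb{T}_L)$, valid because $1-\frac12>\frac12-2\gamma-\frac1q$ for $\gamma>0$. This gives $\|w_N((j-1)\delta)\|_B\le C\|w_N((j-1)\delta)\|_{1,2}$.

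For the temporal factor I will write $\eta(t)=1+\int\gamma\eta\,ds+\int\alpha\eta\,d\beta$. The drift part is Lipschitz in $t$, with constant controlled by $\sup|\eta|$. The martingale part is handled by the Burkholder–Davis–Gundy inequality, which gives
\[
\mathbb{E}|\eta(t)-\eta(s)|^q\le C|t-s|^{q/2}\,\mathbb{E}\sup|\eta|^q,
\]
and the supremum is bounded by \eqref{3.6} with $w\equiv1$. Inserting this into the Sobolev–Slobodeckij double integral
\[
\int\!\!\int\frac{|\eta(t)-\eta(s)|^q}{|t-s|^{1+sq}}\,dt\,ds,
\]
with $s=\frac12-\ve$, gives a finite integrand exponent since $\frac q2-1-(\frac12-\ve)q=\ve q-1>-1$. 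The result is
\[
\mathbb{E}\|\eta\|^q_{B^{\frac12-\ve,q}_q}\le C\,\delta^{\ve q},
\]
uniformly in $j$ and $N$. The smaller exponent $\frac\gamma2-\ve$ then follows by monotonicity of Besov spaces on a bounded interval.

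To assemble the estimate, I will take the $p$-th moment using independence, so that the expectation of the product factorizes into $\mathbb{E}\|w_N((j-1)\delta)\|_{1,2}^p\cdot\mathbb{E}\|\eta\|^p$. The second factor is at most $C\delta^{\ve p}$, after passing from $q$ to $p$ by Jensen's inequality, since $q\ge p$. The first factor is at most $C\|u_0\|_{1,2}^p$ by \eqref{3.14} and Proposition \ref{prop:3.5}. Summing over $j=1,\dots,N+1$ gives
\[
C\|u_0\|_{1,2}^p\,(N+1)\,\delta^{\ve p}.
\]

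\textbf{Main obstacle.} This sum does not obviously stay bounded as $N\to\infty$, since $(N+1)\delta^{\ve p}=T^{\ve p}(N+1)^{1-\ve p}$. The fix I would try first is to avoid the crude factorization and instead subtract the constant value $w_N((j-1)\delta)$ on each interval. The seminorm is unaffected by this, since it only involves increments $\eta(t)-\eta(s)$, and the lower-order $L^q$-in-time part of the norm carries a factor $\delta^{1/q}$, so these pieces sum to an $L^q(0,T)$ quantity. For the seminorm I would compute $\mathbb{E}|\eta(t)-\eta(s)|^q\lesssim|t-s|^{q/2}$ but integrate it over $[(j-1)\delta,j\delta)^2$ only. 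That yields a factor $\delta^{1+\ve q}$ rather than $\delta^{\ve q}$, and the extra $\delta$ exactly compensates the sum over $N+1$ intervals, giving $(N+1)\delta^{1+\ve q}\le C$. This is the mechanism used in \cite{Gess1}, Lemma 4.4, which I would follow, taking the $p$ and $q$ powers in the order appearing in the statement.
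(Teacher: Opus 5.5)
Your product-structure argument takes a genuinely different route from the paper. The paper gives no argument of its own and defers to Lemma~4.2 of \cite{Gess1}. There, as in Lemma~\ref{lem:3.7} here, one bounds time increments in a negative Sobolev norm and interpolates against the uniform $H^1$ bound; that is where the pairing of $\frac\gamma2-\ve$ with $\frac12-2\gamma$ comes from. Your factorization $w_N(t)=w_N((j-1)\delta)\,\eta(t,(j-1)\delta)$, together with $H^1\hookrightarrow B^{\frac12-2\gamma,q}_q$, avoids interpolation and even gives the time exponent $\frac12-\ve$.

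\textbf{The gap is in the summation over $j$, and your fix does not close it.} Write $I_j:=[(j-1)\delta,j\delta)$.
\begin{itemize}
\item The per-interval bound is miscomputed. $\E\|\eta(\cdot,(j-1)\delta)\|^q_{B^{\frac12-\ve,q}_q(I_j)}$ is of order $\delta$, coming from the $L^q$-in-time part since $\eta\approx 1$. The seminorm contributes $O(\delta^{1+\ve q})$, not $\delta^{\ve q}$.
\item Keeping $p$-th powers inside the sum, Jensen gives $\delta^{p/q}$ per interval, and $(N+1)\delta^{p/q}=T^{p/q}(N+1)^{1-p/q}\to\infty$ for $q>p$. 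The seminorm alone gives $(N+1)\delta^{p/q+\ve p}$, which also diverges for small $\ve$.
\item Subtracting the constant $w_N((j-1)\delta)$ changes nothing. That constant piece contributes exactly $\delta^{p/q}\|w_N((j-1)\delta)\|^p$ per interval.
\item Your count $(N+1)\delta^{1+\ve q}\le C$ is the count for $q$-th powers, not for the order of powers you say you follow.
\end{itemize}
In fact the displayed inequality, with $\sum_j\|\cdot\|^p$, cannot hold uniformly in $N$ when $q>p$. Since $\|w_N(t)\|_{B^{\frac12-2\gamma,q}_q}\ge c\int_L w_N(t,x)\,dx=c\,\eta(t,0)\int_L u_0\,dx$, the $L^q$-in-time part alone gives a lower bound of order $(N+1)^{1-p/q}$.

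The estimate must be read as in Lemma~\ref{lem:3.7} and \cite{Gess1}, i.e.\ $\E\big(\sum_j\|w_N\|^q_{B^{\frac12-\ve,q}_q(I_j;B^{\frac12-2\gamma,q}_q(\mathbb{T}_L))}\big)^{p/q}\le C\|u_0\|^p_{1,2}$. With that reading your approach closes, but the order of operations matters. Set $X_j:=\|w_N((j-1)\delta)\|^q_{B^{\frac12-2\gamma,q}_q}$, which is $\mathcal{F}_{(j-1)\delta}$-measurable, and $Y_j:=\|\eta(\cdot,(j-1)\delta)\|^q_{B^{\frac12-\ve,q}_q(I_j)}$, which is independent of $\mathcal{F}_{(j-1)\delta}$. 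Use concavity of $r\mapsto r^{p/q}$ first, and independence only afterwards, term by term:
\[
\E\Big(\sum_{j=1}^{N+1}X_jY_j\Big)^{p/q}\le\Big(\sum_{j=1}^{N+1}\E X_j\,\E Y_j\Big)^{p/q}\le\big(C(N+1)\delta\,\|u_0\|_{1,2}^{q}\big)^{p/q}=\big(CT\|u_0\|_{1,2}^{q}\big)^{p/q}.
\]
Here $\E X_j\le C\|u_0\|^q_{1,2}$ by Proposition~\ref{prop:3.5} applied with exponent $q$, and $\E Y_j\le C\delta$ by your BDG computation. No subtraction of constants is needed.
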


\begin{proof}
The proof of Lemma \ref{lem:3.8} is analogous to the proof of Lemma~4.2 in \cite{Gess1}, and hence the proof of Proposition~3.6 follows the lines of Proposition~4.2 in \cite{Gess1}.
\end{proof}

\subsection*{Convergence of the splitting scheme}

We start with the proposition, which is an analog of Proposition~5.2 in \cite{Gess1}.

\begin{proposition}\label{prop:3.7}
Denote
\[
X_u := BC^0([0,T]\times \T), \quad
X_J := L^2([0,T]\times \T)\ \text{(with weak topology)}, \quad
X_\beta := BC^0([0,T];\mathbb{R}).
\]
Then there exist random variables
\[
\tilde u, \tilde u_N : [0,T] \to X_u, \qquad
\tilde J, \tilde J_N : [0,T] \to X_J, \qquad
\tilde \beta_N^{'}, \tilde \beta_N : [0,T] \to X_\beta,
\]
with
\[
(\tilde u_N, \tilde J_N, \tilde \beta_N^{'}) \sim (u_N, J_N, \beta_N),
\]
where
\[
J_N := \chi_{v_N > 0}\, v_N^n(\partial_x^3 v_N).
\]
Furthermore, there exists a subsequence (still indexed by $N$) such that
\[
\tilde u_N(\omega) \to \tilde u(\omega) \quad \text{in } X_u, \qquad
\tilde J_N(\omega) \rightharpoonup \tilde J(\omega) \quad \text{in } X_J,
\]
and
\[
\tilde \beta_N^{'}(\omega) \to \tilde \beta(\omega) \quad \text{in } X_\beta,
\]
for every $\omega \in [0,1]$, as $N \to \infty$.
\end{proposition}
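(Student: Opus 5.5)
The plan follows the standard tightness plus Skorokhod/Jakubowski route, as in Proposition~5.2 of \cite{Gess1}. The aim is to show that the laws of the triples $(u_N, J_N, \beta)$ are tight on the product space $X_u\times X_J\times X_\beta$. A representation theorem will then give the new probability space, and we take it to be $([0,1],\mathcal{B},\text{Leb})$. This choice explains why the statement speaks of $\omega\in[0,1]$.

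\textbf{Tightness of $u_N$ in $X_u$.} I will combine \eqref{3.19} with \eqref{3.13}. Choose $k,\sigma,q$ with $\frac{k}{2}-\sigma>\frac1q$ and $\frac12-2k>\frac1q$. The Besov space $B^{\frac{k}{2}-\sigma,q}_q([0,T];B^{\frac12-2k,q}_q(\T))$ then embeds compactly into $C([0,T]\times\T)$. One way to see this is to embed it into a space of Hölder functions in $(t,x)$ with a slightly larger Besov index and apply Arzel\`a--Ascoli. Chebyshev's inequality applied to \eqref{3.19} then shows that, for each $\epsilon>0$, the ball of radius $R(\epsilon)$ in this Besov space carries probability at least $1-\epsilon$ uniformly in $N$. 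This yields tightness in $X_u$.

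The parameter window is delicate, because $k<\frac2p$ and $k\le\frac12$ must hold together with $\frac12-2k>\frac1q$. I would take $p=2$ and $k$ small, with $q$ large. If the embedding is not directly available, a backup is to use $L^\infty(0,T;H^1)$ from \eqref{3.13}, which gives $C^{1/2}$ regularity in $x$, together with Hölder continuity in time in $H^{-1}$. An interpolation (Lions--Aubin/Simon type) argument then gives compactness in $C([0,T]\times\T)$.

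\textbf{Tightness of $J_N$ and $\beta$.} For $J_N$, I bound
\[
\int_0^T\!\!\int_{\T} |J_N|^2\,dx\,dt \le \sup_{Q_T}|v_N|^{n}\int_0^T\!\!\int_{\{v_N>0\}} v_N^n(\partial_x^3 v_N)^2\,dx\,dt .
\]
The supremum factor is controlled through $\sup_t\|v_N\|_{1,2}^n$ via the Sobolev embedding. The integral factor is controlled by the dissipation term in \eqref{3.13}, after a Hölder step in $\omega$ using the moments for $p=n+2$. Bounded sets in $L^2(Q_T)$ are relatively compact in the weak topology, so the laws of $J_N$ are tight on $X_J$. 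The Wiener process $\beta$ does not depend on $N$, so its law on $X_\beta$ is a single Radon measure and is trivially tight.

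\textbf{Main obstacle.} The hard step is the Skorokhod representation, because $X_J$ with the weak topology is not metrizable. I will handle it with Jakubowski's version of the Skorokhod theorem, which applies on spaces admitting a countable family of continuous functions that separate points. For $L^2(Q_T)$ with the weak topology, such a family is given by $f\mapsto (f,e_i)$ with $e_i$ a countable dense set. Jakubowski's theorem then provides, along a subsequence, random variables on $[0,1]$ with $(\tilde u_N,\tilde J_N,\tilde\beta_N')\sim(u_N,J_N,\beta)$ that converge pointwise in $\omega$ in the three topologies. The weak limit is exactly the convergence $\tilde J_N(\omega)\rightharpoonup\tilde J(\omega)$ in the statement.
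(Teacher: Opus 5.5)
Your proposal is correct and follows essentially the same route as the paper. The paper also gets tightness of $u_N$ in $BC^0([0,T]\times\mathbb{T}_L)$ from the Besov estimate plus a compact H\"older/Arzel\`a--Ascoli embedding (it simply delegates this to the cited reference), tightness of $J_N$ in the weak topology of $L^2$ from Sobolev embedding, Markov's inequality and \eqref{3.13}, and then concludes with the Skorokhod--Jakubowski theorem on $[0,1]$.

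One small tightening in the $J_N$ step. You pull out $\sup_{Q_T}|v_N|^n$ and apply H\"older in $\omega$, but that needs a second moment of the dissipation, which \eqref{3.13} does not supply. Instead, bound $\sup_x|v_N(t,x)|^n\le C\|v_N(t)\|_{1,2}^n$ inside the time integral, so the $p=n+2$ weighted dissipation term of \eqref{3.13} applies directly, as the paper does. Alternatively, note that tightness only needs boundedness in probability, and this is preserved under products.
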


\begin{proof}
Apart from the convergence of $\tilde J_N$, the proofs of the remaining statements are analogous to Proposition~5.2 in \cite{Gess1}.  
In order to show the tightness of $J_N$, the Sobolev embedding theorem and Markov's inequality imply
\[
\mathbb{P}\Bigl( \|J_N\|_{L^2([0,T]\times \T)} > R \Bigr)
\le \frac{1}{R^2} \mathbb{E}\int_0^T \int_{L}
 v_N^{2n}\, |\partial_x^3 v_N|^2 \, dx\, dt
\]
\[
\le \frac{C}{R^2} \mathbb{E}\int_0^T \|v_N\|_{1,2}^n
\int_0^T \int_{L} v_N^n |\partial_x^3 v_N|^2 \, dx\, dt
\le \frac{C}{R^2} \|u_0\|_{1,2}^{n+2} \to 0
\quad \text{as } R \to \infty,
\]
uniformly in $N \in \mathbb{N}$, as follows from \eqref{3.13}.
Hence, the set
\[
\{ \|J_N\|_{L^2([0,T] \times \T)} \leq R \}
\]
is weakly compact in $L^2([0,T]\times \T)$.
We may now conclude the convergence using the Skorokhod--Jakubowski theorem \cite{Jakub}.
\end{proof}
\medskip

In a similar way to Corollary~5.4 in \cite{Gess1}, we have

\begin{corollary}\label{cor:3.8}
For $\tilde u_N$, $\tilde J_N$, $\tilde W_N$ and $\tilde u$ as in Proposition \ref{prop:3.7}, we have
\begin{equation}\label{3.19}
\|\tilde u_N - \tilde u\|_{BC^0([0,T]\times \T)} \to 0,
\quad
\|\tilde v_N - \tilde u\|_{L^\infty([0,T]\times \T)} \to 0,
\quad
\|\tilde w_N - \tilde u\|_{L^\infty([0,T]\times \T)} \to 0,
\tag{3.19}
\end{equation}
as $N \to \infty$, $\tilde{\mathbb{P}}$-almost surely.
\end{corollary}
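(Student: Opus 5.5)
The first convergence in \eqref{3.19} is part of Proposition \ref{prop:3.7}, so only the second and third need proof. The idea is that $u_N$ is a time-reparametrization of $v_N$ and $w_N$ on the half-intervals, so each of $\tilde v_N,\tilde w_N$ differs from $\tilde u_N$ by a time shift of at most $\delta = T/(N+1)$. Uniform convergence of $\tilde u_N$ to a continuous limit should then transfer to $\tilde v_N$ and $\tilde w_N$.

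First, $\tilde v_N,\tilde w_N$ have to be defined on the new probability space. Following Corollary~5.4 in \cite{Gess1}, I take them as the same measurable functionals of $\tilde u_N$ that produce $v_N,w_N$ from $u_N$:
\[
\tilde v_N(s) := \tilde u_N\Bigl(\tfrac{s+(j-1)\delta}{2}\Bigr), \qquad s \in [(j-1)\delta, j\delta],
\]
and similarly for $\tilde w_N$ with the second half-interval. Since $(\tilde u_N,\tilde J_N,\tilde\beta_N')\sim(u_N,J_N,\beta_N)$, the laws of $(\tilde v_N,\tilde w_N)$ coincide with those of $(v_N,w_N)$. The jumps at the nodes $j\delta$ are exactly the connection rule \eqref{3.12}.

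Next, for $s\in[(j-1)\delta,j\delta)$, the time $\tau=\frac{s+(j-1)\delta}{2}$ satisfies $|\tau-s|\le\delta$. Hence
\[
|\tilde v_N(s,x)-\tilde u(s,x)| \le \|\tilde u_N-\tilde u\|_{BC^0} + \sup_{|\tau-s|\le \delta}|\tilde u(\tau,x)-\tilde u(s,x)|.
\]
The first term tends to zero $\tilde{\mathbb P}$-a.s. by Proposition \ref{prop:3.7}. The second is bounded by the modulus of continuity of $\tilde u(\omega)$ on the compact set $[0,T]\times\mathbb{T}_L$ evaluated at $\delta$, and it tends to zero because $\tilde u(\omega)\in BC^0$ is uniformly continuous there. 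Taking the supremum over $s,x$ gives the claim for $\tilde v_N$ in $L^\infty$; the $L^\infty$ norm, rather than $BC^0$, is used because of the jumps at the nodes. The argument for $\tilde w_N$ is identical with $\tau=\frac{s+j\delta}{2}$.

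The main subtlety is that $\tilde u$ only lies in $BC^0([0,T]\times\mathbb{T}_L)$. Uniform continuity is automatic on a compact set, so this causes no problem. The real care is needed in the measurability of the reconstruction map $u_N\mapsto(v_N,w_N)$, which is what guarantees equality in law. That map is a composition of time rescalings, hence continuous from $BC^0$ into $L^\infty$ piecewise on the half-intervals, so it is Borel measurable.
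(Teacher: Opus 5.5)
Your proposal is correct and is essentially the argument the paper relies on when it defers to Corollary~5.4 of \cite{Gess1}. There, as in your proposal, $\tilde v_N$ and $\tilde w_N$ are recovered from $\tilde u_N$ through the time rescalings $s\mapsto \frac{s+(j-1)\delta}{2}$ and $s\mapsto \frac{s+j\delta}{2}$, which shift time by at most $\delta/2$, and the $L^\infty$ convergence then follows from the uniform convergence of $\tilde u_N$ together with the uniform continuity of $\tilde u$ on the compact set $[0,T]\times\mathbb{T}_L$.
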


\medskip

Following  \cite{Gess1}, Proposition 5.5, once again, we get

\begin{corollary}\label{cor:3.9}
Let $\tilde u_N$ and $\tilde u$ be as in Proposition \ref{prop:3.7}.
Then there exist subsequences of $\tilde u_N$, $\tilde v_N$ and $\tilde w_N$, again denoted by
$\tilde u_N$, $\tilde v_N$ and $\tilde w_N$, such that for any $p \geq 2$ we have
\begin{equation*} \label{3.20}
\tilde u_N \rightharpoonup^\ast \tilde u,
\qquad
\tilde v_N \rightharpoonup^\ast \tilde u,
\qquad
\tilde w_N \rightharpoonup^\ast \tilde u,
\quad \text{as } N \to \infty
\end{equation*}
in
\[
L^p\bigl([0,T]; L^\infty([0,T]; H^1(\T))\bigr).
\]
Furthermore,
\begin{equation*}
\tilde{\mathbb{E}} \, \sup_{t \in [0,T]} \|\tilde u(t)\|_{H^1}^p
\le C \|\tilde u_0\|_{H^1}^p,
\end{equation*}
with a constant $C>0$ independent of $\tilde u$ and $\tilde u_0$.
Hence, $\tilde u$ is a bounded continuous $H^1_w(\T)$-valued process, where
$H^1_w(\T)$ is the space $H^1(\T)$ with weak topology induced by $\|\cdot\|_{H^1}$.
\end{corollary}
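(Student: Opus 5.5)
The plan is to transfer the uniform bounds of Proposition \ref{prop:3.5} from $u_N, v_N, w_N$ to the Skorokhod copies $\tilde u_N, \tilde v_N, \tilde w_N$, and then pass to the limit with weak-$\ast$ compactness and lower semicontinuity of the norm.

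\textbf{Step 1: transfer of bounds.} By Proposition \ref{prop:3.7} we have $(\tilde u_N,\tilde J_N,\tilde\beta_N')\sim(u_N,J_N,\beta_N)$. The map $u\mapsto\sup_{t\in[0,T]}\|u(t)\|_{H^1}^p$ is lower semicontinuous on $BC^0([0,T]\times\T)$, with the convention that it equals $+\infty$ off $L^\infty(0,T;H^1)$, because the $H^1$ norm is a supremum of continuous linear functionals tested against smooth functions. It is therefore Borel, and equality in law gives
\[
\tilde{\E}\sup_{t\in[0,T]}\|\tilde u_N(t)\|_{H^1}^p=\E\sup_{t\in[0,T]}\|u_N(t)\|_{H^1}^p\le C\|u_0\|_{1,2}^p .
\]
The same holds for $\tilde v_N,\tilde w_N$. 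These are defined through the same concatenation, or equivalently by including $v_N,w_N$ in the joint law used in Proposition \ref{prop:3.7}.

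\textbf{Step 2: weak-$\ast$ compactness.} The bounded sequences lie in $L^p(\tilde\Omega;L^\infty(0,T;H^1(\T)))$, which is the dual of the separable space $L^{p'}(\tilde\Omega;L^1(0,T;H^{-1}(\T)))$. By Banach--Alaoglu we extract weak-$\ast$ convergent subsequences, and a diagonal argument over $p\in\N$ gives a single subsequence that works for all $p$.

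To identify the limits, Corollary \ref{cor:3.8} gives $\tilde u_N,\tilde v_N,\tilde w_N\to\tilde u$ uniformly $\tilde{\mathbb P}$-a.s. The uniform $L^p$ bounds for $p>2$ provide uniform integrability, so by Vitali the convergence also holds in $L^2(\tilde\Omega\times Q_T)$. Since weak-$\ast$ limits coincide with these strong limits when tested against smooth test functions, all three limits equal $\tilde u$.

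\textbf{Step 3: the supremum bound.} Lower semicontinuity of the norm under weak-$\ast$ convergence gives $\tilde\E\,\|\tilde u\|_{L^\infty(0,T;H^1)}^p\le C\|u_0\|_{H^1}^p$. To upgrade the essential supremum in time to a genuine supremum, fix $\omega$ and $t$, and note that $\|\tilde u(t)\|_{H^1}\le\liminf_N\|\tilde u_N(t)\|_{H^1}$. This holds because $\tilde u_N(t)\to\tilde u(t)$ uniformly in $x$, and any subsequence that is bounded in $H^1$ has a further subsequence converging weakly in $H^1$ to that same limit. Hence $\sup_t\|\tilde u(t)\|_{H^1}\le\liminf_N\sup_t\|\tilde u_N(t)\|_{H^1}$, and Fatou's lemma combined with Step 1 yields the claimed estimate.

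\textbf{Step 4: weak continuity.} It remains to show that $\tilde u$ is a bounded continuous $H^1_w(\T)$-valued process. Almost surely $\tilde u\in C([0,T]\times\T)$ and $\sup_t\|\tilde u(t)\|_{H^1}<\infty$. Let $t_k\to t$. The set $\{\tilde u(t_k)\}$ is bounded in $H^1$, so every subsequence has a weakly convergent sub-subsequence, and its weak limit must be the uniform limit $\tilde u(t)$. Hence $\tilde u(t_k)\rightharpoonup\tilde u(t)$ in $H^1$, which is continuity into $H^1_w$.

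\textbf{Main obstacle.} The delicate point is measurability and the transfer of laws in Step 1. The functionals must be Borel on the path space $X_u$ on which Proposition \ref{prop:3.7} is set up, and $v_N,w_N$ must be defined on the new probability space. I would handle this by realizing the $H^1$ norm as a countable supremum over a dense set of test functions, which makes the functional Borel on $X_u$. I would also apply the Jakubowski theorem to the enlarged tuple $(u_N,v_N,w_N,J_N,\beta_N)$, so that the copies of $v_N$ and $w_N$ exist on $\tilde\Omega$ with the correct joint law.
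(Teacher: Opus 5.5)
Your proposal is correct and is essentially the argument the paper relies on, since it simply refers to Proposition~5.5 of \cite{Gess1}: you transfer the bounds of Proposition~\ref{prop:3.5} through equality in law (recovering $\tilde v_N,\tilde w_N$ from $\tilde u_N$ by undoing the concatenation), apply Banach--Alaoglu, identify the limits via the a.s.\ uniform convergence of Corollary~\ref{cor:3.8}, use lower semicontinuity, and obtain weak continuity from $\tilde u\in C([0,T]\times\T)$ together with $H^1$-boundedness. The one point to phrase carefully is the duality in Step~2: it holds for the mixed-norm space of jointly measurable functions on $\tilde\Omega\times[0,T]$, but not for the Bochner space $L^p(\tilde\Omega;L^\infty(0,T;H^1(\T)))$, because $L^\infty(0,T;H^1)$ lacks the Radon--Nikodym property; your pathwise Fatou argument in Step~3 gives the genuine supremum bound without depending on this.
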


\medskip

We will also need the analog of Proposition~5.6 from \cite{Gess1}.

\begin{proposition}\label{prop:3.10}
Let $\tilde u_N$, $\tilde u$ and $\tilde J_N$ be as in Proposition \ref{prop:3.7}.
Then the distributional derivative $\partial_x^3 \tilde u$ satisfies
\[
\partial_x^3 \tilde u \in L^2(\{ \tilde u > r \})
\quad \text{for any } r > 0.
\]
Furthermore,
\[
\tilde J_N = \chi_{\tilde v_N > 0}\, \tilde u_N \, (\partial_x^3 \tilde u_N),
\qquad
\text{and}
\qquad
\tilde J = \chi_{\tilde u > 0}\, \partial_x^3 \tilde u,
\quad \tilde{\mathbb{P}}\text{-almost surely}.
\]
\end{proposition}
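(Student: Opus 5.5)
We prove Proposition \ref{prop:3.10} by following the argument of Proposition~5.6 in \cite{Gess1}, with the quadratic mobility replaced by $v^n$. We first identify $\tilde J_N$ and then pass to the limit on the sets $\{\tilde u > r\}$.

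\emph{Step 1: identification of $\tilde J_N$.} The map $(u_N, v_N, J_N)\mapsto \chi_{v_N>0}v_N^n\partial_x^3 v_N - J_N$ is, after a smoothing approximation, a Borel function on the path spaces. Hence the equality $(\tilde u_N,\tilde J_N,\tilde\beta_N')\sim(u_N,J_N,\beta_N)$ from Proposition \ref{prop:3.7} carries the pathwise relation $J_N=\chi_{v_N>0}v_N^n\partial_x^3 v_N$ over to the tilde variables. Concretely, we test against $\partial_x\varphi$ for $\varphi\in C^\infty(Q_T)$, use the equality in law of the resulting real random variables, and conclude that the identity holds $\tilde{\mathbb P}$-a.s.

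\emph{Step 2: local $L^2$ bound for $\partial_x^3\tilde u$.} Fix $r>0$ and $\omega$ in the full-measure set where Corollary \ref{cor:3.8} holds. Uniform convergence $\tilde v_N\to\tilde u$ on $[0,T]\times\T$ gives $\tilde v_N>r/2$ on the open set $U_r:=\{\tilde u>r\}$ for $N$ large. There the mobility is bounded below, so
\[
\int_{U_r}|\partial_x^3\tilde v_N|^2\le (r/2)^{-2n}\int_{U_r}\chi_{\tilde v_N>0}\tilde v_N^{2n}|\partial_x^3\tilde v_N|^2=(r/2)^{-2n}\|\tilde J_N\|^2_{L^2}.
\]
The right-hand side is bounded along the subsequence, because $\tilde J_N\rightharpoonup\tilde J$ weakly in $L^2$, and weakly convergent sequences are bounded. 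Therefore $\partial_x^3\tilde v_N$ is bounded in $L^2(U_r)$ and has a weakly convergent subsequence. Since $\tilde v_N\to\tilde u$ in distributions on $U_r$, the weak limit equals $\partial_x^3\tilde u$, which gives $\partial_x^3\tilde u\in L^2(\{\tilde u>r\})$.

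\emph{Step 3: identification of $\tilde J$.} On $U_r$ we have $\tilde v_N^n\to\tilde u^n$ uniformly and $\partial_x^3\tilde v_N\rightharpoonup\partial_x^3\tilde u$ weakly in $L^2$. A product of a strongly and a weakly convergent sequence converges weakly, so $\tilde J_N\rightharpoonup\tilde u^n\partial_x^3\tilde u$ in $L^2(U_r)$. By uniqueness of weak limits, $\tilde J=\chi_{\tilde u>0}\tilde u^n\partial_x^3\tilde u$ on $U_r$, and letting $r\downarrow0$ covers $\{\tilde u>0\}$.

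The remaining part, and the main obstacle, is showing $\tilde J=0$ a.e.\ on $\{\tilde u=0\}$. I would test with $\psi\in L^2$ supported in $\{\tilde u<r\}$ and write
\[
\int\tilde J_N\psi=\int\chi\,\tilde v_N^{n/2}\bigl(\tilde v_N^{n/2}\partial_x^3\tilde v_N\bigr)\psi .
\]
Cauchy--Schwarz bounds this by $\sup_{\{\tilde u<r\}}\tilde v_N^{n/2}$ times the factor $\bigl(\int\tilde v_N^n|\partial_x^3\tilde v_N|^2\bigr)^{1/2}\|\psi\|_2$. By the integral term in \eqref{3.13}, after taking expectations and passing along a subsequence with Fatou's lemma, that factor is a.s.\ bounded. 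The supremum is at most $(2r)^{n/2}$ for $N$ large, so it vanishes as $r\to0$. This yields the claim with the indicator as stated (the statement's $\chi_{\tilde u>0}\partial_x^3\tilde u$ being read with the mobility factor, consistent with Definition \ref{Def:2.1}). One technical point needs care here: the integral in \eqref{3.13} carries the weight $\|\partial_x v_N\|^{p-2}$. I would handle this by taking $p=2$.
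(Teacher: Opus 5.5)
Your architecture is the one the paper intends: its proof only defers to Proposition~5.6 of \cite{Gess1} and to \eqref{3.13}. Your Steps 2--4 are sound. Uniform convergence of $\tilde v_N$, together with boundedness of the weakly convergent $\tilde J_N$, gives $\partial_x^3\tilde u\in L^2(\{\tilde u>r\})$. The strong-times-weak argument identifies $\tilde J$ on $\{\tilde u>0\}$. The case $p=2$ of \eqref{3.13} supplies exactly the factor $\tilde v_N^{n/2}$ that kills $\tilde J$ on $\{\tilde u=0\}$. Reading the misprinted identities with the mobility factor is also correct.

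\textbf{The gap is Step 1.} Both Step 2 (the bound $|\partial_x^3\tilde v_N|\le (r/2)^{-n}|\tilde J_N|$ on $U_r$) and Step 4 (that $\tilde J_N=0$ on $\{\tilde v_N=0\}$, and that \eqref{3.13} transfers to the tilde variables) rest on it, and as written it does not work.
\begin{itemize}
\item The map $v\mapsto\chi_{\{v>0\}}v^n\partial_x^3 v$ is not defined on $X_u=BC^0$. You do not say what smoothing would turn it into a Borel map compatible with the cutoff.
\item More seriously, testing against $\partial_x\varphi$ only sees $\partial_x J$. On the torus the functions $\partial_x\varphi$ are exactly those with zero $x$-mean. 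So even if you transfer the weak form of the deterministic equation (which does express $\int\!\!\int J_N\partial_x\varphi$ as a continuous functional of $v_N$), you learn $\tilde J_N$ only up to an arbitrary function of $t$ alone.
\item Moreover, the comparison with $\chi_{\{\tilde v_N>0\}}\tilde v_N^n\partial_x^3\tilde v_N$ presupposes that $\partial_x^3\tilde v_N$ is a function on $\{\tilde v_N>0\}$. That is part of what has to be proved.
\end{itemize}

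\textbf{How to repair it.} Encode the identity as membership in a Borel set. Obtain $v$ from $u$ by the deterministic time reparametrization, and let $S\subset X_u\times X_J$ consist of those $(u,J)$ such that:
\begin{itemize}
\item $\int\!\!\int_{\{v=0\}}|J|\,dx\,dt=0$, and
\item $\int\!\!\int J\,v^{-n}\phi=-\int\!\!\int v\,\partial_x^3\phi$ for every $\phi$ in a countable, $C^3$-dense family of test functions supported in the open slabs $((j-1)\delta,j\delta)\times\T$ with $\min_{\mathrm{supp}\,\phi}v>0$.
\end{itemize}
Each condition is Borel. The constraint $\min_{\mathrm{supp}\,\phi}v>0$ is open in $BC^0$, and on it $(v,J)\mapsto\int\!\!\int Jv^{-n}\phi$ is sequentially continuous. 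By the definition of $J_N$ and Theorem \ref{Th:3.2}, $\mathbb P((u_N,J_N)\in S)=1$. Equality in law then gives $\tilde{\mathbb P}((\tilde u_N,\tilde J_N)\in S)=1$. This is precisely the statement that $\partial_x^3\tilde v_N\in L^2_{\rm loc}(\{\tilde v_N>0\})$ and $\tilde J_N=\chi_{\{\tilde v_N>0\}}\tilde v_N^n\partial_x^3\tilde v_N$.

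With this in place, $\int\!\!\int_{\{\tilde v_N>0\}}\tilde J_N^2\,\tilde v_N^{-n}$ is a measurable functional of $(\tilde u_N,\tilde J_N)$. It has the same law as $\int\!\!\int_{\{v_N>0\}}v_N^n(\partial_x^3v_N)^2$. So \eqref{3.13} with $p=2$ and Fatou's lemma give the almost-sure bound your Step 4 uses, and the rest of your argument goes through.
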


\medskip

\begin{proof}
The proof follows analogously to Proposition~5.6 in \cite{Gess1}. 
It is based on inequality \eqref{3.13} from Proposition \ref{prop:3.5} and the corresponding estimates for $u_N$.
\end{proof}

\begin{proof}[Proof of Theorem \ref{Th:2.1}.] Analogously to \cite{Gess1}, from the Deterministic--Stochastic connection, for any $t \in [0,T]$ we deduce
\begin{align*}
(v_N(t),\varphi)_2 - (u_0,\varphi)_2
&= (v_N(t),\varphi)_2
+ \sum_{j=1}^{[t/\delta]}
\Big(
- (v_N(j\delta, \cdot),\varphi)_2
+ \lim_{s\to j\delta} (w_N(s, \cdot),\varphi)_2
\Big)
\nonumber\\
&\quad +
\sum_{j=1}^{[t/\delta]}
\Big(
\lim_{s\to j\delta} (v_N(s, \cdot),\varphi)_2
- (w_N((j-1)\delta,\cdot),\varphi)_2
- (v_N(0, \cdot),\varphi)_2
\Big)
\nonumber\\
&= (v_N(t, \cdot),\varphi)_2 - (v_N([t/\delta] \delta, \cdot),\varphi)_2
\nonumber\\
&\quad +
\sum_{j=1}^{[t/\delta]}
\Big(
\lim_{s\to j\delta} (v_N(s, \cdot),\varphi)_2
- (v_N((j-1)\delta, \cdot),\varphi)_2
\Big)
\nonumber\\
&\quad +
\sum_{j=1}^{[t/\delta]}
\Big(
\lim_{s\to j\delta} (w_N(s, \cdot),\varphi)_2
- (w_N((j-1)\delta, \cdot),\varphi)_2
\Big)
\nonumber\\
&= \int_0^t \int_{\{ v_N>0\}}
u_N(s)\, \partial_x^3 v_N(s)\, \partial_x \varphi \, dx\, ds
\nonumber\\
&\quad + \int_0^{[t/\delta] \delta}
\gamma(s)\,(w_N(s, \cdot),\varphi)_2\, ds
+ \int_0^{[t/\delta] \delta}
\alpha(s)\,(w_N(s, \cdot),\varphi)_2\, d\beta(s),
\end{align*}
for all $\varphi \in C^\infty(\mathbb T_L)$, $\mathbb P$-a.s. Changing the stochastic basis to
\[
([0,1], \tilde{\mathcal F}, (\tilde{\mathcal F}_t), \tilde{\mathbb P}),
\]
we obtain the existence of an in-law equivalent convergent subsequences,
denoted with $\tilde u_N$, $\tilde v_N$ and $\tilde w_N$, such that
\begin{align}
(\tilde v_N(t),\varphi)_2 - (u_0,\varphi)_2
&= \int_0^t \int_{\{\tilde v_N(s)>0\}}
\tilde v_N^n(s)\, \partial_x^3 \tilde v_N(s)\, \partial_x \varphi \, dx\, ds
\nonumber\\
&\quad + \int_0^{[t/\delta] \delta}
\gamma(s)\,(\tilde w_N(s, \cdot),\varphi)_2\, ds
+ \int_0^{[t/\delta] \delta}
\alpha(s)\,(\tilde w_N(s, \cdot),\varphi)_2\, d\tilde\beta(s).
\label{3.22}
\end{align}
Passing to the limit as $N \to \infty$, we get
\begin{align*}
(\tilde u(t),\varphi)_2 - (u_0,\varphi)_2
&= \int_0^t \int_{\{\tilde u(s)>0\}}
\tilde u^n(s)\, \partial_x^3 \tilde u(s)\, \partial_x \varphi \, dx\, ds
\nonumber\\
&\quad + \int_0^t \gamma(s)\,(\tilde u(s),\varphi)_2\, ds
+ \int_0^t \alpha(s)\,(\tilde u(s),\varphi)_2\, d\tilde\beta(s).
\end{align*}
The proof of the convergence in \eqref{3.22} is done analogously to
Lemma~5.7 in \cite{Gess1}, using Proposition \ref{prop:3.7} and Proposition \ref{prop:3.10}. Non-negativity of the corresponding solution follows from the non-negativity of $\tilde{v}_N$ (Theorem \ref{Th:3.2}), non-negativity of $\tilde{w}_N$ (Lemma \ref{lem:3.4}), and \eqref{3.19}.
\end{proof}

\section{Global existence of solution on $[0,\infty)$ (Step 2)}\label{Sec 4}
In this section we prove Theorem 2.2. Let us start with an entropy estimate for $v_n$. Denote
\[
G(r) := \frac{1}{(2-n)(1-n)} r^{2-n}.
\]
\begin{lemma}\label{lem:4.1}
For any $t \in [0,T]$ we have
\begin{equation}
\mathbb{E} \int_L G(v_n(t,x))\, dx 
\le C \int_\Omega G(u_0(x))\, dx,
\label{4.1}
\end{equation}
for some constant $C>0$ independent of $N$ and $u_0$.
\end{lemma}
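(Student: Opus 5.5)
The estimate is for the splitting approximation $v_N$ (written $v_n$ in the statement), with the right-hand side read as $\int_L G(u_0)\,dx$. I would prove it by propagating the entropy $\int_L G(\cdot)\,dx$ through the Trotter--Kato scheme, one half-step at a time, as in Proposition \ref{prop:3.5}.

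\textbf{Deterministic steps.} For the regularized problem with mobility $f_\varepsilon$, multiply by $G'(v_\varepsilon)$ and integrate by parts twice. Since $G''(r)=r^{-n}$, this gives
\[
\frac{d}{dt}\int_L G_\varepsilon(v_\varepsilon)\,dx=-\int_L (\partial_x^2 v_\varepsilon)^2\,dx\le 0 .
\]
Here I would use the entropy $G_\varepsilon$ adapted to $f_\varepsilon$, i.e. $G_\varepsilon''=1/f_\varepsilon$, which dominates $G$ up to an $\varepsilon$-correction. Passing to the limit $\varepsilon\to0$ with Fatou's lemma and the uniform convergence $v_\varepsilon\to v$ then gives, on each interval $[(j-1)\delta,j\delta)$,
\[
\int_L G(v_N(t,x))\,dx\le \int_L G(v_N((j-1)\delta+,x))\,dx .
\]

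\textbf{Stochastic steps.} By \eqref{3.8}, $w_N(t,x)=w_N(t_0,x)\,\eta(t,t_0)$, so
\[
G(w_N(t,x))=G(w_N(t_0,x))\,\eta(t,t_0)^{2-n}.
\]
Because $\eta(t,t_0)$ is independent of $\mathcal F_{t_0}$, formula \eqref{ast} with $p=2-n$ gives
\[
\mathbb E\int_L G(w_N(t,x))\,dx=\mathbb E\int_L G(w_N(t_0,x))\,dx\;\mathbb E\,\eta(t,t_0)^{2-n}
\le e^{C\delta}\,\mathbb E\int_L G(w_N(t_0,x))\,dx,
\]
where $C$ depends only on $n$ and on $\sup_{[0,T]}(|\gamma|+\alpha^2)$.

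\textbf{Chaining.} The connection rule \eqref{3.12} makes the entropy continuous across each switch between the two dynamics. Iterating over the at most $N+1$ intervals gives
\[
\mathbb E\int_L G(v_N(t,x))\,dx\le e^{C(N+1)\delta}\int_L G(u_0)\,dx = e^{CT}\int_L G(u_0)\,dx ,
\]
and $e^{CT}$ is independent of $N$, as required.

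\textbf{Main obstacle.} The hard part is justifying the deterministic entropy dissipation for the limit solution of Theorem \ref{Th:3.2}. This needs the identity at the $\varepsilon$ level, the lower semicontinuity step, and an approximate datum $u_{0,\varepsilon}$ with $\int_L G(u_{0,\varepsilon})\,dx\to\int_L G(u_0)\,dx$. I would try $u_{0,\varepsilon}=u_0+\varepsilon^{\theta}$ (mollified), which works since $G$ is decreasing for $n>2$. If the compatibility between $G_\varepsilon$ and $G$ causes trouble, I would instead cite the entropy estimate from \cite{DalPas}/\cite{Bern} directly for the deterministic solution.
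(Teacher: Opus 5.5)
Your proof is correct and follows the paper's argument. Both use entropy monotonicity on the deterministic half-steps (the paper simply cites Bernis--Friedman, Theorem 4), an $e^{C\delta}$ growth bound on the stochastic half-steps, and chaining through the connection rule \eqref{3.12} to reach $e^{CT}$. The one difference is in the stochastic bound: you get it exactly from the factorization $w_N=w_N(t_0)\,\eta(t,t_0)$, independence, and the lognormal moment, whereas the paper applies It\^o's formula to $G(w_N)$ and then Gronwall, so your version avoids having to justify that the stochastic integral has zero mean.
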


\begin{proof}
For any $j=1,\dots,N+1$ and $t \in [(j-1)\delta, j\delta)$, using \cite{Bern}, Theorem 4,
\begin{equation}
\int_L G(v_n(t,x))\, dx 
\le \int_L G(v_n((j-1)\delta,x))\, dx.
\label{4.2}
\end{equation}
Applying It\^o's formula for $G(w_N)$, we obtain
\begin{align*}
& \int_L G(w_N(t,x))\, dx
= \int_L G(w_n((j-1)\delta,x))\, dx  \\
&\quad + \frac{1}{1-n}  \int_{(j-1)\delta}^{t} \gamma(s) \int_L 
 w_N^{2-n}(s,x)\, dx\, ds + \frac{1}{1-n}  \int_{(j-1)\delta}^{t} \alpha(s) \int_L 
 w_N^{2-n}(s,x)\, dx\, d \beta(s)  \\
&\quad + \frac{1}{2} \int_{(j-1)\delta}^{t} 
\alpha^2(s) \int_L w_N^{2-n}(s,x)\, dx\, ds.
\end{align*}
Thus,
\begin{align*}
\mathbb{E} \int_\Omega G(w_N(t,x))\, dx
&\le \mathbb{E} \int_L G(w_N((j-1)\delta,x))\, dx  \\
&\quad + (n-2)(n-1) \int_{(j-1)\delta}^{t} 
\bigl(\gamma(s) +  \alpha^2(s)\bigr)
\mathbb{E} \int_L G(w_N(s,x))\, dx\, ds.
\end{align*}
By Gronwall's lemma, 
\begin{align}
\mathbb{E}\int_{L} G\bigl(w_N(t,x)\bigr)\,dx
\le
\mathbb{E}\int_{L} G\bigl(w_N((j-1)\delta+,x)\bigr)\,dx \,
e^{C_1\delta},
\label{4.3}
\end{align}
where $C_1>0$ is independent of $N$; here we used the boundedness of the
functions $\gamma(t)$ and $\alpha(t)$ on $[0,T]$. Using estimates \eqref{4.2} and \eqref{4.3}, on $[0,\delta]$ we get
\begin{align}
\mathbb{E}\int_{L} G\bigl(v_N(t,x)\bigr)\,dx
&=
\int_{L} G(u_0(x))\,dx, \qquad \text{ and }
\label{4.4}
\\
\mathbb{E}\int_{L} G\bigl(w_N(t,x)\bigr)\,dx
&\le
\mathbb{E}\int_{L} G\bigl(w_N(0+,x)\bigr)\,dx \, e^{C_1\delta}
\nonumber\\
&=
\mathbb{E}\int_{L} G\bigl(v_N(\delta-,x)\bigr)\,dx \, e^{C_1\delta}
\le
e^{C_1\delta} \int_{L} G(u_0(x))\,dx.
\label{4.5}
\end{align}
On $[\delta,2\delta]$ analogously we have
\begin{align}
& \mathbb{E}\int_{L} G\bigl(v_N(t,x)\bigr)\,dx
\le
\mathbb{E}\int_{L} G\bigl(v_N(\delta+,x)\bigr)\,dx
=
\mathbb{E}\int_{L} G\bigl(w_N(\delta-,x)\bigr)\,dx
\nonumber\\
&\le
e^{C_1\delta}\,
\mathbb{E}\int_{L} G\bigl(w_N(0,x)\bigr)\,dx \le
e^{C_1\delta}\,
\mathbb{E}\int_{L} G\bigl(v_N(\delta-,x)\bigr)\,dx
\le
e^{C_1\delta}\int_{L} G(u_0(x))\,dx,
\label{4.6}
\end{align}
where we used \eqref{4.4}, \eqref{4.5} and the relation
\eqref{3.12}. Next, on $[2\delta,3\delta]$ we get
\begin{align*}
& \mathbb{E}\int_{L} G\bigl(v_N(t,x)\bigr)\,dx
\le
\mathbb{E}\int_{L} G\bigl(v_N(2\delta+,x)\bigr)\,dx
=
\mathbb{E}\int_{L} G\bigl(w_N(2\delta-,x)\bigr)\,dx
\nonumber\\
&\le
e^{C_1\delta}
\mathbb{E}\int_{L} G\bigl(w_N(\delta+,x)\bigr)\,dx = e^{C_1\delta}
\mathbb{E}\int_{L} G\bigl(w_N(2\delta-,x)\bigr)\,dx
\leq 
e^{2C_1\delta}
\int_{L} G(u_0(x))\,dx.
\end{align*}
using \eqref{4.6}. Proceeding by induction, we obtain the desired estimate \eqref{4.1}.
\end{proof}

\medskip
\noindent
\textbf{Proof of Theorem \ref{th:2.2}}
Recall that $v_N(t,x)$ is nonnegative $\mathbb P$-a.s., and by
\eqref{4.1}, for any $t\in[0,T]$ we have
\begin{align*}
\mathbb{E}\int_{L} v_N^{\,2-n}(t,x)\,dx
\le
C \int_{L} u_0^{\,2-n}(x)\,dx
< \infty.
\end{align*}
Thus, using Fatou's lemma and Corollary \ref{3.8} we have
\begin{equation}
\mathbb{E}\int_L \tilde{u}^{2-n}(t,x)\,dx
\le
C \int_L u_0^{\,2-n}(x)\,dx .
\label{4.2}
\end{equation}
Hence, for almost every $t \in [0,T]$, the set
\[
\{ x \in L : \tu(t,x,\omega) = 0 \}
\]
has zero measure, $\mathbb{P}$-a.s. By Theorem~3.1 we have
\[
v_N \in C^{\frac18, \frac12}\bigl([(j-1)\delta,j\delta) \times \T\bigr), \text{ and }
W_N \in C\bigl([(j-1)\delta,j\delta) \times \T\bigr),
\]
and hence $u \in C(Q_T).$ In this case \eqref{3.19} implies that $\tu$ is continuous in $t$, and the Sobolev embedding $H^1(\T) \subset C^{1/2}(\mathbb{T}_L$ yield the Hölder continuity of $\tu$  in  $x$. In view of Theorem \ref{2.1}, there exists a filtered probability space
\[
(\Omega^1:=[0,1], \mathcal{F}^1, (\mathcal{F}^1_t)_{t\in[0,T]}, \mathbb{P}^1),
\]
on which there exists a martingale solution $u^1(t)$ of \eqref{1.1}
with a Wiener process $\beta_1(t)$ and filtration
\[
\mathcal{F}^1_t = \sigma\{\beta_1(s),\, s\le t,\ t\in[0,T]\}.
\]
Such solution is nonnegative and satisfies
\[
\mathbb{E}\, \sup_{t\in[0,T]}
\|u^1(t,\cdot)\|_{1,2}^p
\le
C \|u_0\|_{1,2}^p.
\]
This, in turn, implies that $u^1(T, \cdot, \omega) \in H^1(\T)$ almost surely, as well as $u^1(T, \cdot, \omega) \neq 0$ a.s. since 
\[
\{x \in L : u^1(T,x,\omega)=0\}
\text{ has zero measure, } \mathbb{P}^1\text{-a.s.}
\]
We next define a new probability space
$(\Omega', \mathcal{F}', \mathbb{P}')$
and introduce a new Wiener process $\beta'(t,\omega')$ on it.
Finally, define the product probability space as
\[
(\Omega^2, \mathcal{F}^2, \mathbb{P}^2)
:=
(\Omega^1 \times \Omega',
 \mathcal{F}^1 \otimes \mathcal{F}',
 \mathbb{P}^1 \otimes \mathbb{P}').
\]
The Wiener process on this product space is
\[
\beta_2(t,\omega_1,\omega')
:=
\begin{cases}
\beta_1(t,\omega_1), & t \in [0,T], \\[6pt]
\beta'(t-T,\omega') + \beta_1(T,\omega_1),
& t \in [T,2T].
\end{cases}
\]
For every fixed $\omega_1$, the equation \eqref{1.1}  
has a martingale solution $\eta(t,\omega^1)$ on $[T,2T)$ with the initial condition
\[
\eta(T,\omega') = u^{1}(T,\omega_1).
\]
In the same way as before we conclude that $\eta(t,\omega')$ is nonnegative,
\[
\eta(2T, x, \omega') \neq 0 \quad \text{ and } \quad \eta(2T,\cdot, \omega') \in H^{1}(\T) \quad \text{a.s.}
\]
We may now define the solution
\[
u^{2}(t,\omega_1,\omega') :=
\begin{cases}
u^{1}(t,\omega_1), & t \in [0,T), \\[4pt]
\eta(t,\omega'), & t \in [T,2T]
\end{cases}
\]
on the space $(\Omega^{2}, \mathcal{F}^{2}, \mathcal{F}^{2}_{t}, \mathbb{P}^{2})$, 
where the filtration $\{\mathcal{F}^{2}_{t}\}$ is defined by
\[
\mathcal{F}^{2}_{t} = \sigma\big( \beta_{2}(s), u^{2}(s), s \le t \big),
\qquad t \in [0,2T].
\]

Repeating this process $n$ times, we obtain a continuous in time solution
on $[0,nT]$ for any $n \ge 1$. The finite-dimensional distributions of the solutions for each $n \ge 1$ match by construction. It remains to make use of Kolmogorov's theorem, which guarantees
the existence of a probability space and a weak solution $u(t)$, $t \ge 0$,
defined on it. This completes the proof of Theorem \ref{th:2.2}.

\bigskip

\section{Positivity of the solution (Step 3)}\label{Sec 5}

\begin{proposition}\label{prop:5.1}
Suppose $n \ge 4$ and $u_0 \in H^{1}(\mathbb{T}_L)$ with
$u_0(x) > 0$ on $[0,1]$. Then there exists a set
$A \subset \Omega$ with $\mathbb{P}(A)=1$ such that
for all $\omega \in A$,
\[
u(t,x,\omega) > 0
\quad \text{for all } t \ge 0, \; x \in L.
\]
\end{proposition}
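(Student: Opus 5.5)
The plan is to combine the entropy estimate of Section 4 with a one-dimensional Sobolev/Hölder argument. The point is that for $n\ge 4$ a finite entropy $\int_L \tu^{2-n}\,dx$ excludes zeros of a function with $H^1$-type regularity.

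\textbf{Step 1: a uniform entropy bound.} Since $u_0\in H^1(\T)\subset C(\T)$ and $u_0>0$, we have $\min u_0>0$, so $\int_L u_0^{2-n}\,dx<\infty$ and Theorem \ref{th:2.2} applies. By Lemma \ref{lem:4.1} and Fatou's lemma (the estimate displayed after Lemma \ref{lem:4.1} in the proof of Theorem \ref{th:2.2}), we get
\[
\E\int_L \tu^{2-n}(t,x)\,dx \le C(T)\int_L u_0^{2-n}\,dx \qquad \text{for } t\in[0,T].
\]
This bound is useful only for fixed $t$. To obtain a statement valid for all $t$ at once, I would instead keep the time-integrated dissipation coming from the regularized entropy (the $\int\!\!\int (\partial_x^2 v)^2$ term of Bernis--Friedman, \cite{Bern}). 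The alternative, which I would try first, is the following. Use a countable dense set of times $t_k$ together with continuity of $\tu$ on $Q_T$, which was established in the proof of Theorem \ref{th:2.2}. Obtain the pointwise positivity at each $t_k$ on a full-measure set, and then argue by contradiction at a first touching time.

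\textbf{Step 2: zeros are incompatible with finite entropy when $n\ge4$.} Fix $\omega$ and $t$ with $\int_L \tu^{2-n}(t)\,dx<\infty$ and $\tu(t)\in H^1$. Suppose $\tu(t,x_0)=0$. If additionally $\tu(t)\in H^2$, which holds for a.e. $t$ from the dissipation bound, then $\partial_x\tu(t,x_0)=0$, since $x_0$ is a minimum. This gives $\tu(t,x)\le C|x-x_0|^{3/2}$, so $\tu^{2-n}\ge c|x-x_0|^{-3(n-2)/2}$. This is non-integrable as soon as $3(n-2)/2\ge1$, which certainly holds for $n\ge4$. With only $H^1$ regularity one has $\tu\le C|x-x_0|^{1/2}$, giving $|x-x_0|^{-(n-2)/2}$. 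This is non-integrable exactly when $n\ge4$, which explains the threshold. I will use this $H^1$ version, valid for every $t$ where $\tu(t)\in H^1$ (all $t$, a.s., by Theorem \ref{Th:2.1}). Hence $\tu(t,\cdot)>0$ whenever the entropy at time $t$ is finite.

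\textbf{Step 3: all times simultaneously.} Take a countable dense set $\{t_k\}\subset[0,\infty)$ and let $A$ be the intersection of the full-measure events on which $\int_L\tu^{2-n}(t_k)\,dx<\infty$ and $\sup_{t\le T}\|\tu(t)\|_{1,2}<\infty$ for every $T$. On $A$, Step 2 gives $\tu(t_k,\cdot)>0$. To pass to every $t$, I would prove that the entropy is a.s. locally bounded in time, $\sup_{t\le T}\int_L\tu^{2-n}(t)\,dx<\infty$. Redo the Itô computation of Lemma \ref{lem:4.1} with a BDG estimate on the martingale term $\frac{1}{1-n}\int\alpha\int w_N^{2-n}\,d\beta$, while the deterministic parts are nonincreasing by \cite{Bern}, Theorem 4. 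This yields $\E\sup_{t\le T}\int_L G(v_N)\,dx\le C\int_L G(u_0)\,dx$. Lower semicontinuity under the uniform convergence of Corollary \ref{cor:3.8} then transfers the bound to $\tu$. Combining this with Step 2 at every $t$ gives positivity for all $t\ge0$ and $x\in L$ on $A$.

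I expect the main obstacle to be this supremum-in-time entropy bound. The splitting scheme alternates deterministic monotone steps with stochastic steps, and one must control $\sup$ over the concatenation uniformly in $N$. I would handle this through the explicit geometric form $w_N=w_N((j-1)\delta)\eta(t,(j-1)\delta)$. It gives $\int G(w_N(t))=\eta^{2-n}\int G(w_N((j-1)\delta))$, so the supremum reduces to $\E\sup_t\prod_j\eta_j^{2-n}$, which is bounded by Doob's inequality for the exponential martingale.
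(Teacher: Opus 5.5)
Your proposal is correct and follows essentially the paper's route. The paper likewise proves a supremum-in-time bound $\E\sup_{t\le T}\int_L v_N^{2-n}\,dx\le C(T)\int_L u_0^{2-n}\,dx$, using Bernis--Friedman monotonicity on the deterministic steps and the exponential factor $\eta^{2-n}$ on the stochastic steps (reached via It\^o's formula rather than your direct product formula, with the supremum controlled by the same $L^2$ martingale bound), transfers it to $\tu$ by Fatou and Corollary \ref{cor:3.8}, and then uses $H^1\subset C^{1/2}$ to show that a zero forces $\int_L \tu^{2-n}\,dx=\infty$ when $n\ge 4$. The detours in your Steps 1 and 3 (dense times, the $H^2$ improvement, BDG) are unnecessary once that supremum bound is in hand.
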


\begin{proof}
In order to establish this proposition, we need to improve the Lemma \ref{lem:4.1} estimate on $u^{n}(t,x,\omega)$. Fix a an arbitrary interval $[0,T]$. According to Theorem \ref{th:2.2}, the equation \eqref{1.1} admits a weak martingale solution on $[0,\infty)$. Moreover, this solution is non-negative, and for every $t>0$ the set
\[
\{ x \in L : u(t,x,\omega)=0 \}
\]
has zero Lebesgue measure $\mathbb{P}$ - almost surely.
For $n \ge 4$, consider the approximating sequences
$v_N$ and $w_n$, constructed in Theorem \ref{Th:2.1}.

For any $j=1,\dots,N+1$ and
$t \in [(j-1)\delta,j\delta)$, by \eqref{4.2} we have 
\begin{equation*}
\int_{L} v_N^{2-n}(t,x)\,dx
\le
\int_{L} v_N^{2-n}((j-1)\delta,x)\,dx.
\label{5.1}
\end{equation*}
Furthermore, applying It\^o's formula,
\begin{align*}
\int_{L} w_N^{2-n}(t,x)\,dx
&=
\int_{L} w_N^{2-n}((j-1)\delta+,x)\,dx \notag \\
&\quad
+ \int_{(j-1)\delta}^{t}
\Big[(2-n)\gamma(s)
+ \frac{(1-n)(2-n)}{2}\alpha^2(s)\Big]
\int_{L} w_N^{2-n}(s,x)\,dx\,ds \notag \\
&\quad
+ \int_{(j-1)\delta}^{t}
(2-n)\alpha(s)
\int_{L} w_N^{2-n}(s,x)\,dx\, d\beta(s).
\label{5.2}
\end{align*}
Therefore, $\displaystyle \int_{L} w_n^{2-n}(t,x)\,dx$
satisfies a linear stochastic differential equation.
Hence,
\begin{align*}
&\int_{L} w_N^{2-n}(t,x)\,dx \\
& =
\int_{L} w_N^{2-n}((j-1)\delta+,x)\,dx  \notag 
\quad \times
\exp\Bigg\{
\int_{(j-1)\delta}^{t}
\varphi(s)\,ds
+
\int_{(j-1)\delta}^{t}
\psi(s)\,d\beta(s)
\Bigg\},
\end{align*}
where
\[
\f(s): = (2-n)\gamma(s)
+ \frac{(1-n)(2-n)}{2}\alpha^2(s) - \frac{(2-n)\alpha(s)}{2},
\]
and 
\[
\psi(s):= (2-n) \alpha(s).
\]
Consequently, using the argument of Lemma \ref{lem:4.1}, on $[0, \delta)$ we obtain
\begin{equation*}
\int_{L} u_N^{2-n}(t,x)\,dx
\le
\int_{L} u_0^{2-n}(x)\,dx.
\end{equation*}
Hence,
\begin{align*}
    \int_{L} w_N^{2-n}(t,x)\,dx
& = \int_{L} w_N^{2-n}(0,x)\,dx
\exp\left( \int_0^t \varphi(s)\,ds + \int_0^t \psi(s)\,d\beta(s) \right) \\
& = \int_{L} v_N^{2-n}(\delta-,x)\,dx
\exp\left( \int_0^t \varphi(s)\,ds + \int_0^t \psi(s)\,d\beta(s) \right) \\
& \leq \int_{L} u_0^{2-n}(x)\,dx
\exp\left( \int_0^t \varphi(s)\,ds + \int_0^t \psi(s)\,d\beta(s) \right).
\end{align*}
In a similar way, on $[\delta,2\delta)$ we obtain
\begin{align*}
& \int_{L} v_N^{2-n}(t,x)\,dx \le \int_{L} v_N^{2-n}(\delta+,x)\,dx = \int_{L} w_N^{2-n}(\delta-,x)\,dx  \\
& = \int_{L} w_N^{2-n}(0,x)\,dx
\exp\left( \int_0^\delta \varphi(s)\,ds + \int_0^\delta \psi(s)\,d\beta(s) \right)\\ 
& =  \int_{L} v_N^{2-n}(\delta-,x)\,dx
\exp\left( \int_0^\delta \varphi(s)\,ds + \int_0^\delta \psi(s)\,d\beta(s) \right)\\
& \le \int_{L} u_0^{2-n}(x)\,dx
\exp\left( \int_0^\delta \varphi(s)\,ds + \int_0^\delta \psi(s)\,d\beta(s) \right) \\
& \le \int_{L} u_0^{2-n}(x)\,dx
\exp\left( \int_0^t \varphi(s)\,ds + \int_0^t \psi(s)\,d\beta(s) \right).
\end{align*}
On $[2\delta,3\delta]$ we get
\begin{align*}
& \int_{L} v_N^{2-n}(t,x)\,dx \le \int_{L} v_N^{2-n}(2\delta+,x)\,dx = \int_{L} w_N^{2-n}(2\delta-,x)\,dx  \\
& = \int_{L} w_N^{2-n}(\delta+,x)\,dx
\exp\left( \int_\delta^t \varphi(s)\,ds + \int_\delta^t \psi(s)\,d\beta(s) \right)\\ 
& =  \int_{L} u_N^{2-n}(2\delta-,x)\,dx
\exp\left( \int_\delta^t \varphi(s)\,ds + \int_\delta^t \psi(s)\,d\beta(s) \right)\\
& \le \int_{L} u_0^{2-n}(x)\,dx
\exp\left( \int_0^t \varphi(s)\,ds + \int_0^t \psi(s)\,d\beta(s) \right).
\end{align*}
Therefore, for any $t \in [(j-1)\delta, j\delta)$,
\[
\int_{L} v_N^{2-n}(t,x)\,dx
\le
\int_{L} u_0^{2-n}(x)\,dx
\exp\left( \int_0^t \varphi(s)\,ds + \int_0^t \psi(s)\,d\beta(s) \right).
\]
Hence,
\[
\sup_{t \in [0,T]}
\int_{L} v_N^{2-n}(t,x)\,dx
\le
\int_{L} u_0^{2-n}(x)\,dx
\sup_{t \in [0,T]}\left[
\exp\left( \int_0^t \varphi(s)\,ds + \int_0^t \psi(s)\,d\beta(s) \right) \right].
\]
Next, using the same argument as in the proof of Lemma \ref{lem:3.4}, the random process
\[
\xi(t)
:=
\exp\left(
\int_0^t \varphi(s)\,ds
+
\int_0^t \psi(s)\,d\beta(s)
\right)
\]
is a solution of a linear equation
\[
\xi(t)
= 1
+ \int_{0}^{t}
\Big[
(2-n) \gamma(s)
+ \frac{(2-n)(1-n)}{2}\,\alpha^2(s)
\Big] \xi(s)\,ds
+ \int_{0}^{t} (2-n)\, \alpha(s) \,\xi(s)\,d\beta(s).
\]
This way,
\[
\mathbb{E}\,\sup_{t\in[0,T]} \xi(t)
\le
\Big(\mathbb{E}\,\sup_{t\in[0,T]} \xi^2(t)\Big)^{1/2}
\le C(T),
\]
where we used the martingale properties of stochastic integrals.
Consequently,
\[
\mathbb{E}\,\sup_{t\in[0,T]}
\int_{L} v_N^{2-n}(t,x)\,dx
\le
\int_{L} u_0^{2-n}(x)\,dx \; C(T).
\]
Using Fatou's lemma, Corollary \ref{cor:3.8}, and Theorem \ref{th:2.2}, we obtain
\begin{equation}\label{5.5}
\mathbb{E}\,\sup_{t\in[0,T]}
\int_{L} u_N^{2-n}(t,x)\,dx
\le
\int_{L} u_0^{2-n}(x)\,dx \; C(T).
\end{equation}
In order to complete the proof of Proposition \ref{prop:5.1}, we now argue by contradiction. Assume there exists a set $B \subset \Omega$ such that $P(B)>0$ and
for any $\omega\in B$ there exist $t(\omega)$, $x_0(\omega)$ such that
\[
u(t(\omega),x_0(\omega),\omega)=0.
\]
Using the embedding $H^1(\T) \subset C^{1/2}(\T)$, % (?)
we have
\[
|u(t(\omega),x,\omega) - u(t(\omega),x_0(\omega),\omega)|
\le
K(t,\omega)\,|x-x_0(\omega)|^{1/2}.
\]
Consequently, 
\begin{align}\label{5.6}
 \nonumber &  \mathbb{E}\,\sup_{t\in[0,T]}
\int_{K} u^{2-n}(t,x)\,dx
\ge
\int_{\{\omega\in B\}}
\sup_{t\in[0,T]}
\int_{L} u^{2-n}(t,x,\omega)\,dx \; P(d\omega) \\
& \ge \int_{\{\omega\in B\}}
\int_{L} u^{2-n}(t(\omega),x,\omega)\,dx \; P(d\omega) \\
\nonumber & \ge
\int_{\{\omega\in B\}}
\frac{1}{K^{\,2-n}(t,\omega)}
\int_{L}
\frac{dx}{|x-x_0(\omega)|^{\frac{n-2}{2}}}
\,P(d\omega)
= \infty.
\end{align}
The latter integral is divergent since $n \ge 4$. Thus, \eqref{5.6} contradicts   \eqref{5.5}, and the proof of Proposition \ref{prop:5.1} is complete.
\end{proof}
\medskip

\begin{corollary} \label{cor:5.2}
It follows from Proposition \ref{prop:5.1} that if $n\ge 4$, the term
\[
\int_0^t \int_{\{\tilde{u}(s)>0\}}
\tilde{u}^n(s) \partial_x^3 \tilde{u}(s)\,\partial_x \varphi \, dx\,ds,
\]
which appears in Definition \ref{Def:2.1}, formula \eqref{2.1}, may now be replaced with the integral over the entire interval, i.e.
\[
\int_0^t \int_{L}
\tilde{u}^n(s) \partial_x^3 \tilde{u}(s)\,\partial_x \varphi \, dx\,ds.
\]
\end{corollary}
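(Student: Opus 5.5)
The plan is to show that, under the hypotheses of Proposition \ref{prop:5.1}, the set $\{\tilde u(s)>0\}$ is the whole of $[0,L]$ for all $s$, almost surely. Then both the integrand and its meaning are unaffected when the domain of integration is enlarged.

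First, by Proposition \ref{prop:5.1} (applied to the solution $\tilde u$ of Theorem \ref{th:2.2}, for $n\ge 4$ and $u_0>0$), there is a set $A$ with $\tilde{\mathbb P}(A)=1$ such that for $\omega\in A$ we have $\tilde u(t,x,\omega)>0$ for every $t\ge 0$ and $x\in\mathbb T_L$. Hence
\[
\{\tilde u(s)>0\}=\mathbb T_L \quad\text{for all } s\in[0,t],\ \omega\in A .
\]

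Second, we have to check that $\tilde u^n\partial_x^3\tilde u$ is a well defined integrable function on all of $Q_t$, so that the new integral makes sense. For fixed $\omega\in A$, the map $(s,x)\mapsto\tilde u(s,x,\omega)$ is continuous on the compact set $\overline{Q_t}$; this uses the continuity of $\tilde u$ established in the proof of Theorem \ref{th:2.2}. It is strictly positive there, so it attains a minimum $r(\omega)>0$. Therefore $Q_t\subset\{\tilde u>r(\omega)/2\}$. Proposition \ref{prop:3.10} gives $\partial_x^3\tilde u\in L^2(\{\tilde u>r\})$ for every $r>0$, so the weak third derivative exists on all of $Q_t$. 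Moreover, item 2 of Definition \ref{Def:2.1} with $\{\tilde u\neq0\}=\mathbb T_L$ yields $\tilde u^{n}\partial_x^3\tilde u\in L^2(Q_t)$ almost surely. Since $\partial_x\varphi$ is bounded, the integral over $[0,t]\times[0,L]$ is finite, and by the first step it coincides with the integral over $\{\tilde u(s)>0\}$ for $\omega\in A$.

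The only delicate point is the measurability and uniformity in time: positivity must hold simultaneously for all $t$, not merely for almost every $t$. This is exactly what Proposition \ref{prop:5.1} provides, as opposed to the almost-every-$t$ statement of Theorem \ref{th:2.2}. Combined with the space-time continuity of $\tilde u$, this gives the uniform lower bound $r(\omega)$ used above. Substituting into \eqref{2.1} then gives the identity with the integral over the entire interval, $\tilde{\mathbb P}$-a.s.
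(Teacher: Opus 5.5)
Your proposal is correct and follows the paper's one-line reasoning: Proposition \ref{prop:5.1} gives, almost surely, $\{\tilde u(s)>0\}=\mathbb{T}_L$ for every $s\ge 0$, so the domain of integration in \eqref{2.1} is already the whole interval. Your additional check that $\tilde u^n\partial_x^3\tilde u\in L^2(Q_t)$, using the positive minimum $r(\omega)$, Proposition \ref{prop:3.10} and item 2 of Definition \ref{Def:2.1}, is a sound elaboration of a point the paper leaves implicit.
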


\section{Proof of the main results (Step 4)} \label{Sec 6}

Let us start with the proof of Remark \ref{rem:1}, dealing with the long time behavior of the equation \eqref{1.2*}.
\begin{proof}[Proof of Remark \ref{rem:1}] Introduce 
\begin{equation}\label{1*}
v(t,x):=u(t, x - \beta(t)),
\end{equation}
and consider the equation, satisfied by $v$. In order to do that, we note that the change of variables \eqref{1*} is equivalent to 
\[
u(t,x) = v(t, x + \beta(t)).
\]
Thus, by Ito's formula,
\[
du(t,x) = [v_t(t,x+\beta(t)) + \frac{1}{2}v_{xx}(t,x+\beta(t))] dt + \d_x v(t,x+\beta(t)) d\beta(t).
\]
The equation \eqref{1.2*} now reads as

\begin{multline*}
v_t(t,x+\beta(t)) dt + \frac{1}{2}v_{xx}(t,x+\beta(t)) dt + \d_x v(t,x+\beta(t)) d\beta(t)  = \\ \left(\partial_x(-v(t,x+\beta(t))^{2}v_{xxx}(t,x+\beta(t))) + \frac{1}{2}  v_{xx}(t,x+\beta(t)) \right)dt +
\partial_x v(t,x+\beta(t)) \, d \beta(t),
\end{multline*}
or
\[
v_t(t,x+\beta(t))  = \partial_x(-v(t,x+\beta(t))^{2}v_{xxx}(t,x+\beta(t)).
\]
After re-labling $x + \beta(t) = y$, the above equation becomes
\begin{equation*}
v_t(t,y)  = \partial_y(-v(t,y)^{2}v_{yyy}(t,y)).
\end{equation*}
Since $\beta(0) = 0$, the initial conditions for $u$ and $v$ are the same, we may conclude, following \cite{DalPas, BertPugh}, that
\begin{equation*}\label{1.4}
 \lim_{t \to \infty}v(t,y) = \frac{1}{L} \int_0^L u_0(x) \, dx,
 \end{equation*}
or, in our original notation, 
\[
\frac{1}{L} \int_0^L u_0(x) \, dx =  \lim_{t \to \infty}v(t,y)  =  \lim_{t \to \infty}v(t,x+\beta(t)) = \lim_{t \to \infty}u(t,x).
\]
Furthermore, since, by periodicity,
\[
\int_{0}^{L} v_y^2(t,y) \, dy = \int_{0}^{L} u_x^2(t,x-\beta(t)) \, dx  = \int_{0}^{L} u_x^2(t,x) \, dx:= J[u(t)], 
\]
the exponential decay of the energy $J[u(t)]$ follows from the main result of \cite{Tud}, applied to $v$.
\end{proof}

We proceed with the proof of Theorem \ref{Th:2.3}. To this end, we will use the result of Khasminskii \cite{Khas} on the stability of linear stochastic equations
\begin{equation}
dX(t) = b(t)X(t)\,dt + \sigma(t)X(t)\,d\beta(t).
\label{6.1}
\end{equation}

\begin{theorem}(\cite{Khas}, Ch.~6) \label{Th:6.1.1}
Consider the equation \eqref{6.1}.

\medskip
\noindent
(i) If
\[
\int_0^\infty \sigma^2(t)\,dt < \infty,
\]
then the condition
\[
\int_0^\infty b(t)\,dt = -\infty
\]
is necessary and sufficient for almost sure asymptotic stability of the trivial solution of \eqref{6.1}.

\medskip
\noindent
(ii) If
\[
\int_0^\infty \sigma^2(t)\,dt = \infty,
\]
then the condition
\begin{equation}
\limsup_{t\to\infty}
\frac{\displaystyle \int_0^t \left(b(s)-\frac12\sigma^2(s)\right)\,ds}
{\displaystyle \left(2\int_0^t \sigma^2(s)\,ds \,
\ln\ln\!\left(\int_0^t \sigma^2(s)\,ds\right)\right)^{1/2}}
< -1
\label{ast*ast}
\end{equation}
is sufficient for almost sure asymptotic stability.
\end{theorem}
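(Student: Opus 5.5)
The plan is to exploit the fact that \eqref{6.1} is scalar and linear, so it can be solved explicitly, exactly as in \eqref{3.5}:
\[
X(t)=X(0)\exp\Big\{\int_0^t\big(b(s)-\tfrac12\sigma^2(s)\big)\,ds+M(t)\Big\},\qquad M(t):=\int_0^t\sigma(s)\,d\beta(s).
\]
Almost sure asymptotic stability of the trivial solution means two things:
\begin{itemize}
\item stability in probability: for every $\varepsilon,\rho>0$ there is $\delta>0$ such that $|X(0)|<\delta$ implies $\mathbb{P}(\sup_{t\ge0}|X(t)|>\varepsilon)<\rho$;
\item almost sure attraction: $X(t)\to0$ a.s.
\end{itemize}
Both reduce to the behaviour of the exponent $Z(t):=\int_0^t(b-\frac12\sigma^2)\,ds+M(t)$. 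Attraction is equivalent to $Z(t)\to-\infty$ a.s. If in addition $\sup_{t\ge0}Z(t)<\infty$ a.s., then stability in probability follows by choosing $\delta$ so small that $\mathbb{P}(\sup_t Z(t)>\ln(\varepsilon/\delta))<\rho$. Since $Z\to-\infty$ and $Z$ is continuous, $\sup_t Z<\infty$ holds automatically, so everything reduces to proving $Z(t)\to-\infty$ a.s.

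The key structural fact is that $M$ is a continuous local martingale with quadratic variation $\langle M\rangle_t=\int_0^t\sigma^2(s)\,ds$. By the Dambis--Dubins--Schwarz theorem, $M(t)=W(\langle M\rangle_t)$ for some standard Brownian motion $W$.

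\emph{Case (i)}, $\langle M\rangle_\infty<\infty$. Here $M$ is an $L^2$-bounded martingale (by BDG, or simply by the time change), so $M(t)\to M_\infty$ a.s., with $M_\infty$ finite. Also $\int_0^t\frac12\sigma^2\,ds$ converges. Hence $Z(t)=\int_0^t b\,ds+O(1)$ a.s., and $Z(t)\to-\infty$ if and only if $\int_0^t b\,ds\to-\infty$. This gives sufficiency. For necessity, if $\int_0^t b\,ds\not\to-\infty$, pick $t_k\to\infty$ along which $\int_0^{t_k}b$ stays bounded below. Then $|X(t_k)|\ge |X(0)|e^{-C(\omega)}>0$ for $X(0)\neq0$, so $X(t)\not\to0$ and stability fails.

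\emph{Case (ii)}, $\langle M\rangle_\infty=\infty$. Write $s=\langle M\rangle_t\to\infty$. The law of the iterated logarithm for $W$ gives a.s.
\[
\limsup_{t\to\infty}\frac{M(t)}{\sqrt{2s\ln\ln s}}=1 .
\]
Condition \eqref{ast*ast} provides $\varepsilon>0$ and $T(\omega)$ such that $\int_0^t(b-\frac12\sigma^2)\,ds\le-(1+\varepsilon)\sqrt{2s\ln\ln s}$ for $t\ge T$. The LIL gives $M(t)\le(1+\varepsilon/2)\sqrt{2s\ln\ln s}$ for $t$ large. Adding the two bounds,
\[
Z(t)\le-\tfrac{\varepsilon}{2}\sqrt{2s\ln\ln s}\to-\infty\quad\text{a.s.}
\]

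The only delicate step is this case: it needs care in applying DDS when $\sigma$ may vanish on intervals, so that $\langle M\rangle$ is only nondecreasing. The clean way is to use the version of the LIL for continuous local martingales with $\langle M\rangle_\infty=\infty$, which follows from DDS because $W$ is evaluated along a continuous nondecreasing time change tending to $\infty$. A smaller point in case (i) is interpreting $\int_0^\infty b=-\infty$ as $\lim_{t\to\infty}\int_0^t b\,ds=-\infty$; this is exactly what the argument uses.
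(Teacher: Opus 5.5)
The paper gives no proof of this statement; it imports it from Khasminskii (Ch.~6). Your argument is correct and is the standard proof behind that result: the explicit solution $X(t)=X(0)e^{Z(t)}$, a.s.\ convergence of the $L^2$-bounded martingale $\int_0^t\sigma\,d\beta$ in case (i), and the time change $M(t)=W(\int_0^t\sigma^2\,ds)$ combined with the law of the iterated logarithm in case (ii). Two small points: Khasminskii's definition of stability quantifies over every initial time $s\ge 0$, which your reduction covers verbatim by replacing $Z(t)$ with $Z(t)-Z(s)$; and the threshold $T$ in case (ii) is deterministic, not $T(\omega)$.
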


\bigskip

\noindent
\textbf{Proof of Theorem \ref{Th:2.3}.} As we showed earlier, the equation for the mass has the form \eqref{6.1}, i.e. 
\begin{equation}\label{mass}
\int_L u(t,x) \, dx = \int_L u_0(x) \, dx + \int_0^t \gamma(s) \int_L u(s,x)\, dx \, ds + \int_0^t  \alpha(s) \int_L u(s,x)\, dx \, d \beta(s),
\end{equation}
whose solution is 
\begin{equation}\label{6.2}
\int_L u(t,x) \, dx = \int_L u_0(x) \, dx \exp\left\{
\int_0^t \left(\gamma(s)-\frac12\alpha^2(s)\right)\,ds
+ \int_0^t \alpha(s)\,d\beta(s)
\right\}.
\end{equation}
Using It\^o's formula for the solution of \eqref{1.1} and Corollary \eqref{cor:5.2}, we obtain
\begin{align}\label{6.3}
\|\d_x u(t, \cdot)\|_{L^2}^2
&= \|\d_x u_0\|_{L^2}^2
- 2 \int_0^t \int_L u^n (\d_x^3 u)^2 \, dx \,ds
+ 2 \int_0^t \gamma(s) \|\d_x u(s, \cdot)\|_{L^2}^2\,ds  \nonumber \\
&\quad + 2 \int_0^t \alpha(s)\|\d_x u(s, \cdot)\|_{L^2}^2\,d\beta(s)
+ \int_0^t \alpha^2(s)\|\d_x u(s, \cdot)\|_{L^2}^2\,ds .
\end{align}
Using the comparison theorem for stochastic differential equations (see, e.g. \cite{VI}), we get
\begin{equation}\label{6.4}
\|\d_x u(t, \cdot)\|_{L^2}^2 \le y(t), \qquad t \ge 0,
\end{equation}
where $y(t)$ is the solution of the  equation 
\begin{equation}\label{6.5}
y(t) = \|\d_x u_0\|_{L^2}^2 
+ \int_0^t (2 \gamma(s) + \alpha^2(s))y(s)\,ds
+ \int_0^t 2\alpha(s)y(s)\,d\beta(s).
\end{equation}
Solving \eqref{6.5}, we obtain
\begin{equation*}\label{6.6}
y(t)
= \|\d_x u_0\|_{L^2}^2
\exp\left\{
\int_0^t \left(2 \gamma(s)-\alpha^2(s)\right)\,ds
+\int_0^t 2 \alpha(s)\,d\beta(s)
\right\}.
\end{equation*}
Taking into account the first statement of Theorem \ref{Th:6.1.1}, under the condition \eqref{2.4} we obtain
\begin{equation}
\int_L u(t,x,\omega)\,dx \to 0,
\qquad
y(t,\omega) \to 0,
\qquad t\to\infty,
\quad \text{a.s.}
\label{6.7}
\end{equation}
In view of  \eqref{6.4}, \eqref{6.7} implies
\begin{equation}
\|\d_x u(t, \cdot, \omega)\|_{L^2}^2 \to 0,
\qquad t\to\infty,
\quad \text{a.s.}
\label{6.8}
\end{equation}
Moreover, 
\begin{align*}
\|u(t,\cdot,\omega)\|_\infty
&\le \|\overline{u(t, \cdot, \omega)}\|_{\infty}
+ \|u(t,\cdot, \omega)-\overline{u(t, \cdot, \omega)}\|_\infty \nonumber\\
&\le C\|\d_x u(t, \cdot, \omega)\|_{L^2}^2
+ \frac{1}{L}
\int_L u(t,x,\omega)\,dx,
\end{align*}
which, in view of \eqref{6.7} and \eqref{6.8}, implies
\[
\|u(t,\cdot,\omega)\|_\infty \to 0,
\qquad t\to\infty,
\quad \text{a.s.}
\]
Next, using \eqref{6.2}, we have
\begin{multline*} \label{6.9}
\mathbb{E}\!\left(\int_L u(t,x)\,dx\right)^2
\\
=
\left(\int_L u_0(x)\,dx\right)^2
\exp\left\{
2\int_0^t \left(\gamma(s)-\frac12\alpha^2(s)\right)\,ds
+ 2\int_0^t \alpha^2(s)\,ds
\right\} \to 0 \\
\text{ as } t \to \infty.
\end{multline*}
Using \eqref{6.4} and \eqref{6.5}, we have
\begin{equation}\label{6.10}
\mathbb{E}\|\d_x u(t, \cdot)\|_{L^2}^2
\le \mathbb{E}y(t)
= \|\d_x u_0\|_{L^2}^2
\exp\left\{
\int_0^t (2 \gamma(s) + \alpha^2(s)) \,ds
\right\} \to 0 \text{ as } t \to \infty.
\end{equation}
Thus,
\begin{align*}
\mathbb{E}\|u(t, \cdot)\|_\infty^2
&\le 2\Big(
\mathbb{E}\|u(t, \cdot)-\overline{u(t)}\|_\infty^2
+ \mathbb{E}\|\overline{u(t)}\|_{\infty}^2
\Big) \nonumber\\
&\le C\Big(
\mathbb{E}\|\d_x u(t, \cdot)\|_{L^2}^2
+ \mathbb{E}\Big(\int_L u(t,x)\,dx\Big)^2
\Big)
\to 0,
\qquad t\to\infty.
\end{align*}
This completes the proof of the first part of Theorem \ref{Th:2.3}.

\bigskip

For the proof of the second part of Theorem \ref{Th:2.3},  we calculate the experession \eqref{ast*ast} for the equation \eqref{mass}, satisfied by the mass, as well as for the equation \eqref{6.5}, satisfied by $y$.  
For \eqref{mass} we have
\begin{equation*}\label{6.11}
\frac{
\displaystyle
\int_0^t \left(\gamma(s)-\frac12\alpha^2(s)\right)\,ds
}{
\displaystyle
\left(
2\int_0^t \alpha^2(s)\,ds
\ln\ln\!\left(\int_0^t \alpha^2(s)\,ds\right)
\right)^{1/2}
}
:= I_1,
\end{equation*}
while for the equation for $y(t)$ we obtain
\begin{equation*}\label{6.12}
\frac{
\displaystyle
\int_0^t \left(\gamma(s)-\frac12\alpha^2(s)\right)\,ds
}{
\displaystyle
\left(
2\int_0^t \alpha^2(s)\,ds
\ln\ln\!\left(\int_0^t 4 \alpha^2(s)\,ds\right)
\right)^{1/2}
}
:= I_2.
\end{equation*}
Comparing $I_1$ and $I_2$, we see that if $I_2$ satisfies \eqref{2.6}, then
\[
\liminf_{t\to\infty} I_1(t) < -1
\]
holds. Hence,  Theorem \ref{Th:2.3} follows from Theorem \ref{Th:6.1.1}.

\bigskip

\noindent
\textbf{Proof of Theorem \ref{Th:2.5}.}

In order to proceed, we need the following probabilistic analog of Lemma~1 in \cite{Tud}.

\begin{lemma}
There is a constant $C>0$ such that for any non-negative measurable functions $w(x,\omega)$ and $v(x,\omega) \in H^3(\T)$ we have
\begin{equation}\label{6.13}
\mathbb{E}
\left(
\int_L \frac{v^2(x,\omega)}{w(x,\omega)}\,dx
\right)
\;
\mathbb{E}
\left(
\int_L w(x,\omega)\,
\big(\partial_x^3 v(x,\omega)\big)^2\,dx
\right)
\ge
C
\left(
\mathbb{E}
\int_L (\partial_x^2 v(x,\omega))^2\,dx
\right)^2.
\end{equation}

\end{lemma}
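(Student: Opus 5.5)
The plan is to reduce \eqref{6.13} to its deterministic counterpart, Lemma~1 of \cite{Tud}, applied for each fixed $\omega$, and then to pass to expectations with the Cauchy--Schwarz inequality in $\omega$. For fixed $\omega$ set
\[
A(\omega):=\int_L \frac{v^2(x,\omega)}{w(x,\omega)}\,dx,\qquad
B(\omega):=\int_L w(x,\omega)\big(\partial_x^3 v(x,\omega)\big)^2\,dx,\qquad
D(\omega):=\int_L \big(\partial_x^2 v(x,\omega)\big)^2\,dx .
\]
These are nonnegative measurable functions of $\omega$, possibly taking the value $+\infty$. If $\mathbb{E}A=\infty$ or $\mathbb{E}B=\infty$, then \eqref{6.13} is trivial, with the convention $\infty\cdot 0=0$ handled by the next remark. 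So I assume both expectations are finite. Then $A,B<\infty$ almost surely.

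\textbf{Step 1 (deterministic inequality).} For almost every $\omega$ the functions $v(\cdot,\omega)\in H^3(\T)$ and $w(\cdot,\omega)\ge 0$ satisfy the hypotheses of Lemma~1 in \cite{Tud}. That lemma gives the pathwise bound
\[
A(\omega)\,B(\omega)\ \ge\ C\,D(\omega)^2,
\]
with $C$ depending only on $L$. Where $w$ vanishes, $A$ is interpreted as $+\infty$ unless $v=0$ there. Since $A<\infty$ a.s., we have $v=0$ a.e.\ on $\{w=0\}$, so the pointwise argument of \cite{Tud} applies unchanged. If this lemma had to be reproved, I would follow the route of \cite{Tud}. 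One writes $D=-\int_L \partial_x v\,\partial_x^3 v\,dx$ using periodicity. One then controls $\partial_x v$ through $v$ by interpolation on the torus, and finally applies Cauchy--Schwarz with the weight $w$, in the form
\[
\Big(\int_L |v|\,|\partial_x^3 v|\,dx\Big)^2\le A\,B .
\]
This deterministic step is the real content of the lemma, and it is the main obstacle. For it I rely on \cite{Tud}, checking only that the constant there depends on $L$ alone and not on $v$ or $w$.

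\textbf{Step 2 (passing to expectations).} From Step 1 we get $D\le C^{-1/2}(AB)^{1/2}$ pointwise in $\omega$. Taking expectations and applying the Cauchy--Schwarz inequality on $\Omega$ gives
\[
\mathbb{E}D\ \le\ C^{-1/2}\,\mathbb{E}\big(A^{1/2}B^{1/2}\big)\ \le\ C^{-1/2}\,(\mathbb{E}A)^{1/2}(\mathbb{E}B)^{1/2}.
\]
Squaring both sides yields $\mathbb{E}A\,\mathbb{E}B\ge C(\mathbb{E}D)^2$, which is exactly \eqref{6.13} with the same constant $C$ as in the deterministic lemma.

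The only remaining points are routine: joint measurability of $A$, $B$ and $D$ in $\omega$, which follows from Fubini's theorem, and the conventions for infinite values noted above.
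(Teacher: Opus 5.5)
The gap is in Step 1. The pathwise bound $A(\omega)B(\omega)\ge C\,D(\omega)^2$ with $D=\int_L(\partial_x^2v)^2\,dx$ is false, so no lemma in \cite{Tud} can supply it. In fact \eqref{6.13} as printed is false for the same reason. Take $w\equiv 1$ and the deterministic $v(x)=1+\frac12\cos(\kappa x)$ with $\kappa=2\pi k/L$. Then
\[
\int_L v^2\,dx=\tfrac98 L,\qquad \int_L(\partial_x^3v)^2\,dx=\tfrac18\kappa^6L,\qquad \int_L(\partial_x^2v)^2\,dx=\tfrac18\kappa^4L .
\]
So the left side of \eqref{6.13} is $\frac{9}{64}\kappa^6L^2$ and the right side is $\frac{C}{64}\kappa^8L^2$, and the inequality fails once $\kappa^2>9/C$. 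Your sketched reproof cannot close either. After writing $D=-\int_L\partial_xv\,\partial_x^3v\,dx$, weighted Cauchy--Schwarz produces $\int_L(\partial_xv)^2/w$, not $\int_L v^2/w$. No interpolation bounds $\int_L|\partial_xv|\,|\partial_x^3v|$ by a fixed multiple of $\int_L|v|\,|\partial_x^3v|$: in the example above their ratio is $\pi\kappa/8$.

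The paper's proof actually establishes the inequality with $\partial_x v$ in place of $\partial_x^2 v$. That is also the form used in the proof of Theorem~\ref{Th:2.5}, with $w=u^n$, $v=u$ and right-hand side $(\mathbb{E}\|\partial_xu\|_2^2)^2$, so the second derivative in the statement is a misprint. The idea you are missing is an integration by parts from a well-chosen base point. It uses the hypothesis $v\ge 0$, which your argument never touches. Let $x_0(\omega)$ be a minimum point of $v(\cdot,\omega)\in H^3\subset C^2$, so $\partial_xv(x_0)=0$, $\partial_x^2v(x_0)\ge 0$ and $v(x_0)\ge 0$. Then for every $x\in[x_0,x_0+L]$,
\begin{align*}
\int_L|v|\,|\partial_x^3v|\,dy &\ge -\int_{x_0}^{x}v\,\partial_x^3v\,dy
=-v(x)\,\partial_x^2v(x)+v(x_0)\,\partial_x^2v(x_0)+\tfrac12\bigl(\partial_xv(x)\bigr)^2\\
&\ge -v(x)\,\partial_x^2v(x)+\tfrac12\bigl(\partial_xv(x)\bigr)^2 .
\end{align*}
Integrate over $x\in[x_0,x_0+L]$ and use $-\int_Lv\,\partial_x^2v\,dx=\int_L(\partial_xv)^2\,dx$. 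This gives $\int_L|v|\,|\partial_x^3v|\,dx\ge\frac{3}{2L}\int_L(\partial_xv)^2\,dx$. Combined with $\bigl(\int_L|v|\,|\partial_x^3v|\,dx\bigr)^2\le A(\omega)B(\omega)$, you get $AB\ge\frac{9}{4L^2}\bigl(\int_L(\partial_xv)^2\,dx\bigr)^2$ pathwise. Your Step 2 then goes through verbatim; the paper does the same Cauchy--Schwarz step jointly in $dx\,d\mathbb{P}$. So your overall structure is sound. As written, however, Step 1 hands the only nontrivial step to an inequality that does not hold. The fix is to correct the right-hand side to $\partial_x v$ and supply the argument above.
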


\begin{proof}
For any random variable $x_0(\omega) \in L$ and $x \in L$, 
using the Schwarz inequality, we obtain
\begin{align*}
\left|
\mathbb{E}
\int_{x_0}^x v\,\partial_x^3 v\,dx
\right|
& \leq \mathbb{E}
\int_L 
\frac{v}{\sqrt{w}}
\sqrt{w}\,|\partial_x^3 v|\,dx \\
&\le
\left(
\mathbb{E}
\int_L \frac{v^2}{w}\,dx
\right)^{1/2}
\left(
\mathbb{E}
\int_L w(\partial_x^3 v)^2\,dx
\right)^{1/2}
=: A.
\end{align*}
In other words,
\begin{equation*}\label{6.14}
A \geq  - \E \int_{x_0}^x v\,\partial_x^3 v\,dx.
\end{equation*}
On the other hand, integration by parts yields
\begin{multline*}
-\int_{x_0}^x v\,\partial_x^3 v\,dx
= - v(x,\omega)\,\partial_x^2 v(x, \omega) + v(x_0,\omega)\,\partial_x^2 v(x_0, \omega) + \frac{1}{2} (\d_x v(x, \omega))^2 - \frac{1}{2} (\d_x v(x_0, \omega))^2.
\end{multline*}
Since $v, v_x, v_{xx}, v_{xxx}\in L^2(\T)$, there must exist $x_0(\omega) \in \T$ such that $\d_x v(x_0, \omega) = 0$ and $\d_x^2 v(x_0,\omega) \geq 0$. For such $x_0$ the latter inequality yields
\[
A \ge
\mathbb{E}\!\left(
- v(x,\omega)\,\partial_x^2 v(x,\omega) \right)
+ \frac12 \mathbb{E}\bigl(\partial_x v(x,\omega)\bigr)^2
.
\]
Integrating the last inequality over $[0,L]$ and using Fubini's theorem, we obtain
\[
A \ge \frac32 \mathbb{E}\int_L (\partial_x v)^2\,dx,
\]
which completes the proof of the Lemma.
\end{proof}

\medskip

We return to the proof of Theorem \ref{Th:2.5}.
Using \eqref{6.10}, for all $t\ge 0$ we obtain
\begin{equation}\label{6.15}
\mathbb{E}\bigl(\|\partial_x u(t, \cdot)\|_2^2\bigr)
\le \|u_0\|_2^2
\exp\!\left(
\int_0^t [2\gamma(s)+\alpha^2(s)]\,ds
\right)
:= C(u_0).
\end{equation}
Now, for the entropy $\displaystyle \int_L u^{2-n}(t,x)\,dx$, using It\^o's formula, we get
\begin{align*}
\int_L u^{2-n}(t,x)\,dx
&= \int_L u_0^{2-n}(x)\,dx
- \int_0^t \!\!\int_L (\d_x u)^2\, dx \, ds \nonumber\\
&\quad + \int_0^t \left[(2-n) \gamma(s) + \frac{(1-n)(2-n)}{2} \alpha^2(s)\right]\,\!\!\int_L u^{2-n}(s,x),dx\,ds  \nonumber\\
&\quad + \int_0^t (2-n)\alpha(s)
\int_L u^{2-n}(s,x)\,dx\, d\beta(s).
\end{align*}
Using the comparison theorem once again, we have
\begin{equation*}\label{6.16}
\int_L u^{2-n}(t,x)\,dx \le z(t), \qquad t\ge0,
\end{equation*}
where $z(t)$ is the solution of the equation
\begin{align*}
z(t)
&= \int_L u_0^{2-n}(x)\,dx
+ \int_0^t
\Bigl[(2-n)\gamma(s)
+ \frac{(1-n)(2-n)}{2}\alpha^2(s)\Bigr]
z(s)\,ds \\
&\quad + \int_0^t (2-n)\alpha(s) z(s)\,d\beta(s).
\end{align*}
Solving this linear equation, we obtain
\[
\mathbb{E} z(t)
= \int_L u_0^{2-n}(x)\,dx\,
\exp\!\left(
\int_0^t
\Bigl[(2-n)\gamma(s)
+ \frac{(1-n)(2-n)}{2}\alpha^2(s)\Bigr] ds
\right)
\le C_1(u_0)
\]
for any $t\ge0$, due to the condition \eqref{2.7}. 
Therefore, 
\begin{equation}\label{6.17}
\mathbb{E}\int_L u^{2-n}(t,x)\,dx \le C_1(u_0).
\end{equation}
From \eqref{6.3} we have
\begin{align}\label{6.18}
\mathbb{E}\|\partial_x u(t, \cdot)\|_2^2
&= \|\partial_x u_0\|_2^2
- 2 \int_0^t \mathbb{E}\int_L u^n (\partial_x^3 u)^2 \, dx\, ds  \nonumber\\
&\quad + \int_0^t [2\gamma(s)+\alpha^2(s)]
\mathbb{E}\|\partial_x u(s)\|_2^2 \, ds.
\end{align}
Taking into account
that with probability 1 we have $u(t,x,\omega)>0$ for all $t\ge0$, and $x\in L$, the Definition \ref{Def:2.1} yields $u(t,\cdot,\omega) \in H^3(L)$. The inequality \eqref{6.13} with $w:=u^n$ and $v:=u$ yields
\begin{equation*}\label{6.19}
\mathbb{E}\int_L u^n(t,x)\,dx \;
\mathbb{E}\int_L u^n(t,x) (\d_x^3 u(t,x))^2\,dx
\ge C_2(u_0)\bigl(\mathbb{E}\|\partial_x u(t,\cdot)\|_2^2\bigr)^2 .
\end{equation*}
In view of \eqref{6.17}, we have
\begin{equation*}
\mathbb{E}\int_L u^n(t,x)(\partial_x^2 u(t,x))^2\,dx
\ge C_2(u_0)\bigl(\mathbb{E}\|\partial_x u(t)\|_2^2\bigr)^2,
\end{equation*}
thus \eqref{6.18} implies
\begin{equation}\label{6.20}
\frac{d}{dt}\mathbb{E}\|\partial_x u(t, \cdot)\|_2^2
\le (2\gamma(t)+\alpha^2(t))\mathbb{E}\|\partial_x u(t,\cdot)\|_2^2
- C_2(u_0)\bigl(\mathbb{E}\|\partial_x u(t,\cdot)\|_2^2\bigr)^2.
\end{equation}
Using \eqref{6.15} we may conclude that 
$\mathbb{E}\|\partial_x u(t, \cdot)\|_2^2$ is bounded for $t\ge0$.
Thus, taking into account \eqref{2.7ast}, we obtain
\begin{equation}\label{6.21}
(2\gamma(t)+\alpha^2(t))J(t) - C_2(u_0)J^2(t) < 0,
\quad \text{for sufficiently large } t.
\end{equation}
Denoting $J(t):=\mathbb{E}\|\partial_x u(t, \cdot)\|_2^2$, we claim that
$J(t)\to 0$ as $t\to\infty$.
Arguing by contradiction, assume $J(t)\not\to 0$. Then, using \eqref{6.20} and \eqref{6.21} $J(t)$ is
monotone decreasing and bounded from below, hence $J(t)\to J^*$ as $t\to\infty$, for some $J^*>0$.
Therefore,
\[
(2\gamma(t)+\alpha^2(t))J(t) - C_2(u_0)J^2(t)
\to - C_2(u_0)(J^*)^2 \text{ as } t \to \infty.
\]
In other words, for sufficiently large $T$ we have 
\[
(2\gamma(t)+\alpha^2(t))J(t) - C_2(u_0)J^2(t)
\le -\frac{C_2(u_0)}{2}(J^*)^2,
\quad t\ge T .
\]
Hence,
\[
\frac{d}{dt}J(t)
\le -\frac{C_2(u_0)}{2}(J^*)^2,
\quad t\ge T .
\]
This implies $J(t)\to -\infty$ as $t\to\infty$, which is impossible.
Therefore, $J(t)\to 0$ as $t\to\infty$. 
Now, using Poincar\'e's inequality and the embedding
\[
H^1(\mathbb{T}_L) \hookrightarrow L^\infty(\mathbb{T}_L),
\]
we have
\begin{equation*}
\mathbb{E}\|u(t,x)-\bar u_0 \eta(t)\|_{L^\infty}^2
\le C\,\mathbb{E}\|u(t,x)-\bar u_0 \eta(t)\|_{H^1}^2 \le C_2\,\mathbb{E}\|\partial_x u(t, \cdot)\|_2^2
\longrightarrow 0,
\qquad t\to\infty,
\end{equation*}
which completes the proof of Theorem \ref{Th:2.5}.

\bigskip

\section*{Acknowledgments}  The research of Oleksandr Misiats was supported by Simons Collaboration
Grant for Mathematicians No. 854856 and National Science Foundation Grant DMS-2408507. \\

We would also like to thank an anonimous reviewer for suggesting the change of variables idea \eqref{1*}, which lead to a simple yet elegant proof of the long-time behavior of \eqref{1.2*}.

\end{document}